\documentclass[12pt]{article}

\usepackage{verbatim}
\usepackage{natbib}
\usepackage{amsfonts}
\usepackage{amssymb}
\usepackage{amsmath}
\usepackage{amsthm}
\usepackage{mathrsfs}
\usepackage[pdftex]{color,graphicx}
\usepackage{enumerate}
\usepackage{booktabs} %
\usepackage{hyperref}
\usepackage{todonotes}
\usepackage[margin=3.8cm]{geometry}
\usepackage{setspace}
\usepackage{pgfplots}
\usepackage{amstext}
\usepackage{setspace}

\newcommand{\bs}{\boldsymbol}
\def\bal#1\eal{\begin{align}#1\end{align}}
\def\bals#1\eals{\begin{align*}#1\end{align*}}
\def\be#1\ee{\begin{equation}#1\end{equation}}

\DeclareMathOperator*{\argmin}{argmin\;}
\DeclareMathOperator{\trace}{trace}
\DeclareMathOperator{\Var}{Var}
\DeclareMathOperator{\diag}{diag}

\newtheorem{definition}{Definition}
\newtheorem{theorem}{Theorem}
\newtheorem{corollary}{Corollary}
\newtheorem{lemma}{Lemma}

\newtheorem{assumption}{Assumption}

\title{From Cross-Validation to SURE: Asymptotic Risk of Tuned Regularized Estimators}
\author{Karun Adusumilli\footnote{Department of Economics, University of Pennsylvania. \href{mailto:akarun@sas.upenn.edu}{akarun@sas.upenn.edu}.} 
\and Maximilian Kasy\footnote{Corresponding author. Department of Economics, University of Oxford. \href{mailto:maximilian.kasy@economics.ox.ac.uk}{maximilian.kasy@economics.ox.ac.uk}. Maximilian Kasy was supported by the Alfred P. Sloan Foundation, under the grant ``Social foundations for statistics and machine learning.''}
\and Ashia Wilson\footnote{Department of Electrical Engineering and Computer Science, MIT.}}

\begin{document}
\maketitle
\onehalfspacing
\begin{abstract}
    We derive the asymptotic risk function of regularized empirical risk minimization (ERM) estimators tuned by $n$-fold cross-validation (CV). The out-of-sample prediction loss of such estimators converges in distribution to the squared-error loss (risk function) of shrinkage estimators in the normal means model, tuned by Stein's unbiased risk estimate (SURE).
    This risk function provides a more fine-grained picture of predictive performance than uniform bounds on worst-case regret, which are common in learning theory: it quantifies how risk varies with the true parameter.

    As key intermediate steps, we show that (i) $n$-fold CV converges uniformly to SURE, and (ii) while SURE typically has multiple local minima, its global minimum is generically well separated. Well-separation ensures that uniform convergence of CV to SURE translates into convergence of the tuning parameter chosen by CV to that chosen by SURE.
\end{abstract}

\clearpage
\section{Introduction}

\paragraph{Background}
The goal of supervised learning is to produce good predictions for new observations.\footnote{We would like to dedicate this paper to Gary Chamberlain, whose conversations provided the original inspiration for this project.} An important class of estimators for supervised learning are {regularized empirical risk minimization (ERM) estimators} that are {tuned using cross-validation (CV)}. 

{ERM estimators} minimize in-sample average prediction loss (empirical risk), among a given class of predictors. Examples include ordinary least squares and maximum likelihood.
ERM estimators are prone to overfitting if the class of predictors is large. Such estimators achieve low in-sample loss, but can perform poorly for new observations. To counter overfitting, {regularization} is used. Regularization adds a penalty term to the ERM objective; common penalties include the $L^{2}$ norm of parameters (in Ridge regression) and the $L^{1}$ norm (in Lasso regression). Adding a penalty avoids overfitting, by reducing the estimator variance, at the cost of introducing some bias, which might result in underfitting.

To achieve good performance, avoiding both overfitting and underfitting, the amount of penalization needs to be carefully {tuned}. This can be done by choosing weights for the penalty that minimize a {cross-validation} estimate of predictive loss. We  focus on n-fold CV, where predictions are evaluated for one hold-out observation at a time, and predictive loss is estimated by averaging evaluations over each of the $n$ observations.

\paragraph{Risk functions}
The present paper characterizes the behavior of estimators of this form by deriving an asymptotic approximation to their {risk function}. A large literature in learning theory characterizes such estimators by proving bounds on their worst-case regret---the supremum over data-generating processes (DGPs) of the difference between an estimator's risk and the expected loss of the best predictor in the given class. Such bounds provide strong robustness guarantees, but they might not be informative about the behavior of predictive algorithms for realistic DGPs. By focusing on the risk function, we obtain a more fine-grained characterization: the risk function tells us how expected predictive performance depends on the DGP, while worst-case regret bounds only characterize performance for the least favorable DGP.

Our asymptotic characterization relates the risk function of regularized ERM estimators tuned using CV to the risk function of the James-Stein (JS) shrinkage estimator \cite{james1961estimation}, and generalizations thereof.
JS shrinkage famously dominates maximum likelihood estimation (MLE) in the normal means setting: The risk function (mean squared error) of JS shrinkage, as characterized in \cite{stein1981estimation}, is lower than the risk of MLE, for any possible DGP. Figure~\ref{fig:JS-risk} illustrates. Our result suggests that this same risk improvement carries over, asymptotically, to CV-tuned penalized estimation in general parametric models. Like CV, SURE provides an unbiased estimate of mean squared error; the JS estimator approximately minimizes SURE over the shrinkage intensity.
\begin{figure}
    \caption{Risk function for JS-shrinkage, dimension 10}
    \label{fig:JS-risk}
    \begin{center}
    \begin{tikzpicture}
        \begin{axis}[
            xlabel={$\lVert \theta \rVert$},
            ylabel={$MSE$},
            xmin=0, xmax=6.5,
            ymin=0, ymax=1.1,
            xtick={1,2,3,4,5,6},
            width=7cm,
            height=5cm,
            axis lines=middle,
            xlabel style={below right},
            ylabel style={at={(axis description cs:-.13,0.5)}, anchor=south, rotate=90},
            legend pos=south east
        ]
        \addplot[mark=none, dashed] coordinates {
            (0, 1)
            (6, 1)
        };
        \addlegendentry{MLE};
        \addplot[mark=none,
        thick] coordinates {
            (0, 0.19797817448257113)
            (0.1, 0.2054582170314164)
            (0.2, 0.22829026983210313)
            (0.3, 0.26421634843194347)
            (0.4, 0.3099541401775098)
            (0.5, 0.3621252122332097)
            (0.6, 0.4174453765812922)
            (0.7, 0.4729872281492919)
            (0.8, 0.5264425242683316)
            (0.9, 0.5762522186927894)
            (1.0, 0.6215656856465035)
            (1.1, 0.6620956218921872)
            (1.2, 0.6979425114595172)
            (1.3, 0.7294329272061092)
            (1.4, 0.7569979113845149)
            (1.5, 0.7810957908666116)
            (1.6, 0.8021691945998736)
            (1.7, 0.820623915309514)
            (1.8, 0.8368205993035923)
            (1.9, 0.851073622847667)
            (2.0, 0.8636537353420901)
            (2.1, 0.8747924235899378)
            (2.2, 0.8846868172898379)
            (2.3, 0.8935044991617345)
            (2.4, 0.9013879119287014)
            (2.5, 0.9084582433406777)
            (2.6, 0.9148187724116754)
            (2.7, 0.9205577105437802)
            (2.8, 0.92575059262033)
            (2.9, 0.930462278908653)
            (3.0, 0.9347486264858748)
            (3.1, 0.9386578832298975)
            (3.2, 0.9422318505406067)
            (3.3, 0.9455068540717296)
            (3.4, 0.9485145554238454)
            (3.5, 0.9512826321915071)
            (3.6, 0.9538353490106405)
            (3.7, 0.9561940382675428)
            (3.8, 0.958377505822497)
            (3.9, 0.9604023743737454)
            (4.0, 0.9622833748491301)
            (4.1, 0.9640335943800212)
            (4.2, 0.965664687913424)
            (4.3, 0.9671870592926904)
            (4.4, 0.9686100166347064)
            (4.5, 0.9699419060103076)
            (4.6, 0.9711902267611615)
            (4.7, 0.9723617312329309)
            (4.8, 0.9734625112488388)
            (4.9, 0.9744980732717461)
            (5.0, 0.9754734038918732)
            (5.1, 0.9763930270194633)
            (5.2, 0.9772610539474514)
            (5.3, 0.9780812272706952)
            (5.4, 0.9788569594992685)
            (5.5, 0.9795913670785175)
            (5.6, 0.9802873004238419)
            (5.7, 0.9809473704900459)
            (5.8, 0.9815739723208001)
            (5.9, 0.9821693059609183)
            (6.0, 0.9827353950609422)
        };
        \addlegendentry{JS-shrinkage}
        \end{axis}
    \end{tikzpicture}
    \end{center}
\end{figure}

\paragraph{Main result}
Our main theorem states that the distribution of the out-of-sample prediction loss of CV-tuned regularized ERM estimators converges to the distribution of the squared-error loss of the corresponding SURE-tuned shrinkage estimator in the Gaussian limit experiment. In particular, the risk function (expected out-of-sample prediction loss, as a function of the true parameter) converges to the mean squared error of SURE-tuned penalized estimation in the normal means model. Two of our intermediate results are of independent interest: the uniform approximation of $n$-fold CV by SURE, and the generic well-separation of the global minimum of SURE. 

\paragraph{Key steps}
Let us briefly outline the three main parts of our proof.
First, we show that in large samples ERM estimators are approximately normally distributed, and that out-of-sample predictive loss is approximately equal to squared error loss. These are standard results, and we follow \cite{van2000asymptotic} in proving this step. We further demonstrate that this approximation carries over to penalized estimators, for fixed tuning parameters. 
Our asymptotic approximations are based on local-to-0 asymptotics:
As sample size $n$ increases, the parameter vector drifts to $0$ (which is the minimizer of the penalty term) at a rate of $1/\sqrt{n}$. This rate is such that both bias and variance remain non-negligible in the limit. The drifting-parameter framework is  natural for studying regularized estimators, because it is the regime in which the penalty has a first-order effect: if the true parameter were fixed away from~$0$, the penalty would become asymptotically irrelevant, while if it were exactly~$0$, no bias--variance tradeoff would arise.

Second, we show that n-fold CV, interpreted as a random function that maps tuning parameter values to estimates of predictive loss, converges uniformly to SURE. In this step of the proof, we build on the prior work of \cite{wilsonkasymackey2018}. This step involves an influence function approximation for leave-one-out estimators, and a second-order approximation to predictive risk. Uniformity of convergence over the space of tuning parameters is key to this step, and we need to carefully specify regularity conditions such that uniformity is guaranteed.

Third, we show that convergence of the CV criterion function for tuning is sufficient for convergence of its minimizer, the tuned parameter. This is non-trivial, because both CV and SURE typically have multiple local minima, and might have multiple global minima. We need to show that generically the global minimum is well-separated. We do this using separate arguments for $L^{1}$ and $L^{2}$ penalties, characterizing the shape and behavior of SURE in either case.

We should emphasize some limitations of our analysis: We do not consider penalties beyond $L^1$ and $L^2$, or $k$-fold CV with $k < n$; extending our results in these directions is left for future work. Our asymptotic approximations are furthermore not appropriate in the over-parametrized regime, when the number of parameters is of similar or larger magnitude than the number of observations.

\paragraph{Literature}
The analysis in this paper connects several lines of work in statistics, econometrics, and machine learning.
Leave-one-out cross-validation was formalized by \cite{Stone1974}; its asymptotic optimality for model selection was established by \cite{Li1987}. A closely related family of risk estimators includes Mallows' $C_p$ \citep{Mallows1973}, generalized cross-validation \citep{Golub1979}, and Stein's unbiased risk estimate \citep{stein1981estimation}. \cite{efron2004estimation} provides a unifying perspective, showing that these criteria all take the form of in-sample error plus a covariance penalty.
\cite{arlot2010survey} provide a comprehensive survey of cross-validation procedures and their theoretical properties. A noteworthy contrast with our results is provided by \cite{shao1993linear}, who shows that leave-one-out CV is asymptotically \emph{inconsistent} for model selection---it selects overfitted models with positive probability---while $k$-fold CV with $k = o(n)$ is consistent. Our result is complementary in focus: rather than asking which of a finite list of models has the best predictive ability, we characterize the limiting distribution and risk function of the estimator selected by leave-one-out CV, showing it converges to that of the SURE-tuned normal-means estimator.

Shrinkage estimation originates with \cite{james1961estimation} and was analyzed in depth by \cite{stein1981estimation}. SURE-based tuning of shrinkage was extended to wavelet thresholding by \cite{Donoho1995}. The two penalty families we study---Ridge \citep{Hoerl1970} and Lasso \citep{tibshirani1996regression}---are the most widely used forms of regularization in practice, and both are commonly tuned by cross-validation.
The close relationship between leave-one-out CV and covariance-penalty criteria such as SURE is further illuminated by \cite{zou2007degrees}, who show---using SURE as the analytical tool---that the effective degrees of freedom of the Lasso equals the number of nonzero fitted coefficients. 

Using local asymptotic frameworks to characterize decision problems was pioneered by \cite{le1972limits}; work using this approach is reviewed in \cite{hirano2020asymptotic}. The use of shrinkage asymptotics for parametric models in econometrics is discussed in \cite{hansen2016efficient}. We build directly on \cite{wilsonkasymackey2018}, who provide non-asymptotic deterministic guarantees for approximate cross-validation, showing that leave-one-out estimators can be well approximated by a single Newton step from the full-sample estimator.

\paragraph{Roadmap}
The remainder of this paper is structured as follows:
In Section \ref{sec:setup}, we introduce our model and assumptions, and define all relevant notation.
In Section \ref{sec:outline}, we first provide a heuristic outline of our proof, and then state a series of intermediate lemmas.
In Section~\ref{sec:numerical}, we illustrate our convergence results with several numerical experiments. 
Section~\ref{sec:conclusion} concludes.
Proofs of all lemmas are collected in the appendix.
In Appendix \ref{sec:verify_examples}, we verify the regularity conditions for leading examples.
In Appendix \ref{sec:influencefunctions}, we prove the lemmas corresponding to the first part of our argument, involving influence function approximations and asymptotic normality.
In Appendix \ref{sec:cvconvergence}, we prove the second part of the argument, namely the (uniform) approximation of n-fold CV by SURE.
In Appendices \ref{sec:ridge} and \ref{sec:lasso}, we show that the global minimizer of SURE is generically well-separated for both $L^{2}$ and $L^{1}$ penalties, thereby proving the third part.
In Appendix \ref{sec:risk}, we conclude our derivation, proving the convergence of risk for tuned estimators.

\section{Setup}
\label{sec:setup}

In the following, we first set up our estimators, and their asymptotic counterparts, in a series of definitions.
We then state the assumptions that will be invoked to justify our asymptotic approximations.

Throughout this paper, we consider the problem of estimating a parameter vector $\beta_n$ which is defined as the minimizer of expected loss $E[l(\beta, Z_n^1)]$. For each $n$, the random vectors $Z_n^i$ are i.i.d. draws from the distribution $\mu_n$, across $i$.
We use local-to-0 coordinates, where $\beta_n = \theta_0 / \sqrt{n}$ for sample size $n$, and we assume that $\theta_0$ remains fixed across $n$.
Any finite-sample object will be denoted by a subscript $n$; objects without subscripts correspond to the limiting experiment.

For prediction problems, typically $Z_n^i=(W_n^i,Y_n^i)$, for predictive features $W_n^i$ and outcomes $Y_n^i$.
Examples include linear OLS regression, where $l(\beta, Z_n^i) = (Y_n^i- W_n^i \cdot \beta)^2$, as well as the use of neural nets\footnote{With a small number of parameters relative to the sample size.} or other parametric models for classification, where $l(\beta, Z_n^i) = -\log(f(Y_n^i|W_n^i, \beta))$, and $f(Y_n^i|W_n^i,\beta)$ is the probability assigned to outcome $Y_n^i$ by the model.

\subsection{Definitions}

Definition~\ref{def:loss} introduces notation for loss functions and for limiting loss functions.
We evaluate estimates of $\theta$ in terms of their expected loss $\bar L_n(\theta, \theta_0)$. For supervised learning, $\bar L_n(\theta, \theta_0)$ is the \textit{out-of-sample} expected prediction error.

\begin{definition}[Loss, empirical loss, and expected loss]$\;$\\
  \label{def:loss}
  Given the loss function $l(\beta, z)$, define the following.
  \bals
  l_n(\theta, z) &= l(\theta / \sqrt{n} ,z) & \text{Loss function in local parameter}\\
  L_n(\theta) &= \sum_{i=1}^n l_n(\theta,Z_n^i) & \text{Empirical loss}\\
  \bar L_n(\theta, \theta_0) &=  E\left[ L_n(\theta) - L_n(\theta_0)\right] & \text{Expected loss}\\
  \bar L(\theta, \theta_0) &= \lim_{n\rightarrow \infty}  \bar L_n(\theta, \theta_0). & \text{Limiting expected loss}
  \eals
  We assume that the sequence of distributions $\mu_n$ is such that the limit in the last definition is well-defined.
\end{definition}

\paragraph{Scaling}
Note that we could have equivalently defined 
$$\bar L_n(\theta, \theta_0) =  n\cdot E\left[ l_n(\theta,Z_n^{n+1}) - l_n(\theta_0,Z_n^{n+1})\right],$$ that is, $\bar L_n(\theta, \theta_0)$ is the expected regret for out-of-sample predictions, multiplied by the sample size $n$.
The multiplication by $n$ is required because in Definition~\ref{def:loss}, we do \textit{not} scale empirical loss $L_n(\theta)$ by a factor $\tfrac1n$, and therefore $L_n(\theta)$ diverges. However, in the definition of the local parameter vector $\theta$ we have re-scaled the parameter vector $\beta$ by a factor of $\tfrac1{\sqrt{n}}$.
This implies that the Hessian (second derivative) of $L_n(\theta)$ with respect to $\theta$, if it exists, is given by
$$
\nabla_\theta^2  L_n(\theta) = \frac1n \sum_{i=1}^n \nabla_\beta^2 l(\theta/\sqrt{n},Z_n^i).
$$
We can thus expect that this Hessian converges, by a law of large numbers, under some additional regularity conditions.

\paragraph{Estimators of $\theta$ and their asymptotic counterparts}
We next specify a series of estimators, in Definition~\ref{def:estimators}.
We start with the standard ERM estimator $\hat \theta_n = \argmin_\theta L_n(\theta)$, and its  limiting counterpart $\hat \theta$, where the latter is normally distributed with mean $\theta_0$ and variance $\Sigma$.
We then consider the regularized versions of these estimators, that is, the penalized ERM estimator with penalty $\lambda  \cdot \pi(\theta)$, and its limiting counterpart.
Our choice of local parametrization ensures that a constant value of $\lambda$ along the sequence indexed by $n$ leads to a non-degenerate limit for the penalized ERM estimator, where neither variance nor bias of this estimator vanish.

In Definition~\ref{def:estimators}, we furthermore introduce leave-one-out loss, and the corresponding leave-one-out ERM and penalized ERM estimator. These will serve as the building blocks of n-fold cross-validation.
\begin{definition}[Estimators of $\theta_0$]
  \label{def:estimators}
We will consider the following estimators of $\theta_0$, for finite $n$, and in the limit experiment:
\bals
  \hat \theta_n &= \argmin_\theta L_n(\theta) & \text{ERM estimator}\\
  \hat \theta &\sim N(\theta_0, \Sigma) & \text{Limiting ERM estimator}\\
  \hat \theta_n^\lambda &= \argmin_\theta \left[L_n(\theta) + \lambda  \cdot \pi(\theta)\right] & \text{Penalized ERM estimator}\\
  \hat \theta^\lambda &= \argmin_\theta \left[\tfrac12\|\theta - \hat \theta\|^2 + \lambda  \cdot \pi(\theta)\right],  &\text{Limiting penalized ERM estimator}
\eals
where $\pi( \cdot )$ is convex and attains its minimum at $0$.\\
We furthermore consider the following leave-one-out (LOO) loss and estimators of $\theta_0$:
\bals
L_n^{-i}(\theta) &= \sum_{j\neq i} l_n(\theta,Z_n^j) & \text{LOO empirical loss}\\
\hat \theta_n^{-i} &= \argmin_\theta L_n^{-i}(\theta) & \text{LOO ERM estimator}\\
\hat \theta^{\lambda,-i}_n &= \argmin_\theta \left[L_n^{-i}(\theta) + \lambda  \cdot \pi(\theta)\right]. &\text{LOO penalized ERM estimator}
\eals
\end{definition}

\noindent We can rewrite the limiting penalized ERM estimator as
$$
\hat \theta^\lambda = \hat \theta + g^\lambda(\hat \theta),
$$
where
\bals
  g^\lambda(\theta) = \argmin_g \tfrac12\|g\|^2 + \lambda  \cdot \pi(\theta + g).
\eals
Denote $\nabla g^\lambda(\theta)$ the derivative of $g^\lambda(\theta)$ where it exists, and define $\nabla g^\lambda(\theta) = 0$ at points where $g^\lambda(\theta)$ is not differentiable.\footnote{This convention is adopted for convenience; we will use it to handle Lasso ($L^1$) penalties in the proof of Lemma~\ref{lem:convergence_cv} below.}

\paragraph{Estimators of risk}
The preceding definition introduced penalized estimators for given, fixed values of the tuning parameter $\lambda$.
We are interested in estimators which choose this tuning parameter in a data-dependent way, where $\lambda$ minimizes an estimator of risk.
For finite sample size $n$, we consider the n-fold crossvalidation (CV) criterion as an estimator of the risk of penalized ERM estimation.
For the limit experiment, we consider Stein's Unbiased Risk Estimator (SURE) as an estimator of the risk of the penalized limiting ERM estimator.
\begin{definition}[Estimators of risk]
  We consider the following estimators of risk for penalized ERM estimators with fixed tuning parameter $\lambda$.
  \bals
  CV_n(\lambda) &=\sum_i l_n(\hat \theta^{\lambda,-i}_n,Z_n^i), &\text{n-fold CV}\\
  SURE(\lambda, \hat \theta, \Sigma) &=  \tfrac12 \left[ \trace(\Sigma) + \|g^\lambda(\hat \theta)\|^2 + 2  \trace \left ( \nabla g^\lambda(\hat \theta)\cdot \Sigma\right )\right]. &\text{SURE}
  \eals
\end{definition}

\paragraph{Tuned estimators}
We can now formally define our tuned estimators. $\hat \theta_n^{*}$ is the penalized ERM estimator using a tuning parameter $\lambda_n^*$ which minimizes the n-fold CV estimator of risk.
$\hat \theta^{*}$ is the penalized limiting ERM estimator using a tuning parameter $\lambda^*$ which minimizes the SURE estimator of risk.
The tuning parameter is chosen from a set $\Lambda \subset \mathbb R$. Later, we will consider $\Lambda = \mathbb R^+ \cup \{\infty\}$ (for Ridge penalties), and $\Lambda$ arbitrary but finite (for Lasso penalties).
\begin{definition}[Tuned estimators of $\theta$]
  \bals
  \hat \theta_n^{*} & = \hat \theta_n^{\lambda_n^*}, & \text{Penalized ERM tuned using CV}\\
  &\lambda_n^* = \argmin_{\lambda \in \Lambda} CV_n(\lambda) \\
  \hat \theta^{*} &= \hat \theta^{\lambda^*}, & \text{Limiting penalized ERM tuned using SURE}\\
  &\lambda^* = \argmin_{\lambda \in \Lambda} SURE(\lambda, \hat \theta, \Sigma).
  \eals
\end{definition}

We evaluate estimators based on their expected loss for new data-points. For supervised learning, this corresponds to the out-of-sample expected prediction loss.
In this paper, we do not consider global criteria such as worst-case risk (maximizing over $\theta_0$) or Bayes risk (averaging over a prior distribution for $\theta_0$).
Instead, we are interested in the dependence of expected loss on the parameter $\theta_0$, which is captured by the risk function. Our notation makes this dependence on $\theta_0$ explicit. The risk function gives a more fine-grained picture of estimator performance, relative to global criteria such as worst-case risk, Bayes risk, or worst-case regret.

The parameter $\theta_0$ enters the following expressions both directly, as an argument of $\bar L_n$ and $\bar L$, and implicitly, via the distribution of $Z$ that the expectations are averaging over.
\begin{definition}[Risk functions]
\bals
  R_n(\theta_0) &= E\left[\bar L_n(\hat \theta_n^{*}, \theta_0) \right] &
  \text{Finite sample risk, tuned using CV}\\  
  R(\theta_0) &= E\left[\bar L(\hat \theta^{*}, \theta_0)\right] &
  \text{Limiting risk, tuned using SURE}.
\eals
\end{definition}

\subsection{Assumptions}

Having defined our estimators and evaluation criteria, we next specify the assumptions invoked in our asymptotic analysis.
Assumption~\ref{as:sequence} sets up a sequence of experiments, indexed by $n$. 
We assume that, for each $n$, the minimizer of expected loss $E[l(\beta, Z_n^i)]$ is given by $\theta_0/\sqrt{n}$. Put differently, the minimizer $\beta_n$ of expected loss drifts towards $0$. 
We furthermore assume that the variance $\Sigma$ of the score $\nabla_\beta l(\theta_0/\sqrt{n}, Z_n^i)$ remains constant along our sequence.
\begin{assumption}[Sequence of experiments]
  \label{as:sequence}
  For each $n$, the random vectors $Z_n^i$ are i.i.d. draws from the distribution $\mu_n$, across $i$.
  The distributions $\mu_n$ are such that $\theta_0$ and $\Sigma$ do not depend on $n$, where
  \bals
    \theta_0 &=  \argmin_\theta E[l(\theta /\sqrt{n}, Z_n^i)],\\
    \Sigma &= \Var\left(\nabla_\beta l(\theta_0/\sqrt{n}, Z_n^i)\right).
  \eals
In particular, the minimizer $\beta_n = \theta_0/\sqrt{n}$ of expected loss drifts to $0$ at rate $1/\sqrt{n}$.
We further assume that $\Sigma$ is positive definite.
\end{assumption}

The limiting Hessian $H = \nabla_\theta^2 \bar L(\theta, \theta_0)$ is typically non-degenerate because of our scaling of $\bar L_n(\theta, \theta_0)$ and of $\theta$. 
The following Assumption~\ref{as:smoothloss} is made for notational convenience. This assumption states that the Hessian $H $ is equal to the identity $I$. This is a coordinate normalization that can be imposed without loss of generality.\footnote{By suitable choice of coordinates we can normalize \textit{either} $H$, \textit{or} the asymptotic variance $\Sigma$ of Assumption~\ref{as:sequence}, \textit{or} the Hessian of the penalty function $\pi$ (when the latter exists),  {but not more than one of these three matrices}, in general.
After normalizing the Hessian, we can however diagonalize one more matrix, without loss of generality.}
\begin{assumption}[Normalized loss function]
  \label{as:smoothloss}
  $$\nabla_\theta^2 \bar L(\theta, \theta_0)|_{\theta = \theta_0} = I.$$
\end{assumption}

The last part of our proof requires showing that the global optimum of $SURE$ with respect to $\lambda$ is generically unique and well-separated.
We will prove this fact for both Ridge and Lasso penalties, using separate arguments for either case.
\begin{assumption}[Penalty function and grid for tuning]$\;$\\
  \label{as:ridgelasso}
  The penalty $\pi(\theta)$ and the set $\Lambda$ take one of the following two forms:
  \begin{enumerate}
    \item \textbf{Ridge}: $\pi(\theta) = \tfrac12 \theta  \cdot A^{-1}  \cdot \theta$, where $A$ is positive definite,\\
    and $\Lambda = \mathbb R^+ \cup \{\infty\}$.
    \item \textbf{Lasso}: $\pi(\theta) = \|A^{-1}  \cdot \theta\|_1$, where $A$ is an invertible matrix,\\ and $\Lambda \subset \mathbb R^+$ is finite.
  \end{enumerate}
\end{assumption}

The remaining assumptions state regularity conditions. The first item in Assumption~\ref{as:ERM_conditions} is a condition on the loss function which allows us to invoke results from empirical process theory. Similar assumptions are invoked in \cite{van2000asymptotic} when deriving the properties of M-estimators.
The second item in Assumption~\ref{as:ERM_conditions} is a weak high-level condition ruling out divergence of ERM estimators, which ensures the applicability of empirical process results.
\begin{assumption}[Conditions for convergence of the ERM estimator]$\:$
  \label{as:ERM_conditions}
  \begin{enumerate}
    \item \textbf{Lipschitz loss}\\
    The loss function $l(\beta, z)$ satisfies
    $$|l(\beta_1, z) - l(\beta_2, z)| \leq m(z)  \cdot \|\beta_1 -\beta_2\|, $$
    for all $\beta_1, \beta_2$ in a neighborhood of $0$, where $\sup_n \Var(m(Z_n^i)) \leq \infty$.
    Furthermore, $l(\beta, Z_n^i)$ is differentiable w.r.t. $\beta$, for all $\beta$ in a neighborhood of $\beta=0$, with probability $1$.
  
  \item \textbf{Stochastically bounded ERM estimator}\\
    The sequence $\hat \theta_n = \argmin_\theta L_n(\theta)$ is bounded in probability.
  \end{enumerate}
\end{assumption}

In order to demonstrate (uniform) convergence of $CV_n$ to $SURE$, we impose the following additional regularity conditions.
\begin{assumption}[Conditions for convergence of CV] $\;$
  \label{as:loss_fn_bounds}
 \begin{enumerate}
  \item \textbf{Conditions on loss}\\
  There exist $\mu>0,\nu<\infty$ independent of $n$
  such that $L_{n}(\theta)$ is $\mu$-strongly convex and has $\nu$-smooth
  Hessians with probability approaching 1 under $\mu_{n}$.\footnote{$L_{n}(\theta)$ is $\mu$-strongly convex if $\nabla^2 L_{n}(\theta) - \mu I$ is positive semi-definite for all $\theta$. A function is $L$-smooth if its gradients are Lipschitz continuous with Lipschitz constant $L$.} 

  \item \textbf{Conditions on scores}\\
  The function $\sqrt{n}\nabla_{\theta}l_{n}(\theta,Z_{n}^{i})$
  is Lipschitz continuous almost everywhere, i.e., there exists $B_{n}(Z_{n}^{i})$
  such that 
  \[
  \left\Vert \sqrt{n}\nabla_{\theta}l_{n}(\theta,Z_{n}^{i})-\sqrt{n}\nabla_{\theta}l_{n}(\theta^{\prime},Z_{n}^{i})\right\Vert \le B_{n}(Z_{n}^{i})\left\Vert \theta-\theta^{\prime}\right\Vert \ \forall\theta,\theta^{\prime}\in\Theta
  \]
  and $E_{\mu_{n}}\left[\left\Vert B_{n}(Z_{n}^{i})\right\Vert ^{2}\right]<\infty$.
  Additionally, there exists $M<\infty$ independent of $n$ such that
  \[
  \mathbb{E}_{\mu_{n}}\left[\left\Vert \sqrt{n}\nabla_{\theta}l_{n}\left(\theta_{0},Z_{n}^{i}\right)\right\Vert ^{4}\right]\le M.
  \]
  
  \item \textbf{Conditions on Hessians}\\
  The Hessian $\nabla_{\theta}^{2}l_{n}(\theta,Z_{n}^{i})$ is
  such that 
  \[
  \frac{1}{n}\sum_{i=1}^{n}\left\Vert n\nabla_{\theta}^{2}l_{n}\left(\theta_{0},Z_{n}^{i}\right)\right\Vert ^{2}=O_{\mu_{n}}(1).
  \]
  Furthermore, there exists $C_{n}(Z_{n}^{i})$ such that 
  \[
  \left\Vert n\nabla_{\theta}^{2}l_{n}(\theta,Z_{n}^{i})-n\nabla_{\theta}^{2}l_{n}(\theta^{\prime},Z_{n}^{i})\right\Vert \le C_{n}(Z_{n}^{i})\left\Vert \theta-\theta^{\prime}\right\Vert \ \forall\theta,\theta^{\prime}\in\Theta
  \]
  and $\sup_n E_{\mu_{n}}\left[C_{n}(Z_{n}^{i})^{2}\right]<\infty$.
  
 \item \textbf{Conditions on Fourth Derivatives}\\
 The fourth derivative tensor, $D_\theta^4 l_{n}(\theta,Z_{n}^{i})$, of $l_n(\cdot, Z_n^i)$ is such that   
 \[
\sup_n \mathbb{E}_{\mu_{n}}\left[\sup_{\theta\in\Theta}\left\Vert n^{2}D_\theta^{4}l_{n}\left(\theta,Z_{n}^{i}\right)\right\Vert ^{4}\right]\le M,
\]
for some $M <\infty$.
 
 \end{enumerate}
 \end{assumption}

In \autoref{sec:verify_examples}, we verify these assumptions for standard linear regression and generalized linear models.

\section{Main result and intermediate lemmas}
\label{sec:outline}

Our main goal in this paper is to prove the following result:
\begin{theorem}
  \label{theo:risk_convergence}
  $$
\bar L_n(\hat \theta_n^*, \theta_0) \rightarrow_d \tfrac12 \|\hat \theta^* - \theta_0\|^2.
$$
\end{theorem}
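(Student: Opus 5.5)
We propose to prove Theorem~\ref{theo:risk_convergence} by a continuous-mapping argument applied to a jointly convergent process, carried out in the three stages sketched in the introduction.

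\textbf{Step 1 (fixed $\lambda$).} Using standard M-estimation arguments (Assumption~\ref{as:ERM_conditions}, following van der Vaart), we first show that $\hat\theta_n \to_d \hat\theta \sim N(\theta_0,\Sigma)$. The Hessian normalization of Assumption~\ref{as:smoothloss} and a quadratic expansion give
$$\bar L_n(\theta,\theta_0) \to \tfrac12\|\theta-\theta_0\|^2$$
uniformly on compacts. For penalized estimators we expand
$$L_n(\theta) = L_n(\hat\theta_n) + \tfrac12\|\theta-\hat\theta_n\|^2 + o_p(1)$$
uniformly on compacts. Strong convexity (Assumption~\ref{as:loss_fn_bounds}) and an argmin continuous-mapping theorem then yield $\hat\theta_n^\lambda = \hat\theta_n + g^\lambda(\hat\theta_n) + o_p(1)$. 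Because $\lambda\mapsto g^\lambda(\theta)$ is continuous (Ridge) or $\Lambda$ is finite (Lasso), this holds uniformly in $\lambda\in\Lambda$. For Ridge we also treat $\lambda=\infty$ through the compactification, using $g^\lambda(\theta)\to -\theta$.

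\textbf{Step 2 (CV $\to$ SURE uniformly).} Following \cite{wilsonkasymackey2018}, we approximate each leave-one-out estimator by one Newton step from the full-sample penalized estimator, $\hat\theta_n^{\lambda,-i}\approx \hat\theta_n^\lambda + (\nabla^2 L_n + \lambda\nabla^2\pi)^{-1}\nabla l_n(\hat\theta_n^\lambda,Z_n^i)$. We then expand $\sum_i l_n(\hat\theta_n^{\lambda,-i},Z_n^i)$ to second order around $\theta_0$. After centering by a $\lambda$-independent term, which does not affect the argmin, we aim to show
$$\sup_{\lambda\in\Lambda}\bigl|CV_n(\lambda) - c_n - SURE(\lambda,\hat\theta_n,\Sigma)\bigr| = o_p(1).$$
The leave-one-out correction sums to $\trace(\nabla g^\lambda\cdot\Sigma)$, using the sample score variance $\to\Sigma$ from the fourth-moment bounds. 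The remainders are controlled uniformly in $\lambda$ by the Lipschitz and fourth-derivative conditions. For Lasso, the kink of $\pi$ is handled by noting that $\hat\theta$ lies on a kink of $g^\lambda$ with probability zero in the limit, which is why the convention $\nabla g^\lambda=0$ there is harmless.

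\textbf{Step 3 (argmin convergence) and conclusion.} Steps 1--2 give joint convergence in distribution of the pair $\bigl(CV_n(\cdot)-c_n,\ \hat\theta_n^{\cdot}\bigr)$ to $\bigl(SURE(\cdot,\hat\theta,\Sigma),\ \hat\theta^{\cdot}\bigr)$ as random elements of $\ell^\infty(\Lambda)$. We then apply an argmax continuous-mapping theorem. This requires that, with probability one over $\hat\theta$, $SURE(\cdot,\hat\theta,\Sigma)$ has a unique and well-separated global minimizer on the compact set $\Lambda$.

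\begin{itemize}
\item \emph{Lasso.} $\Lambda$ is finite, so it suffices that ties between the SURE values at two grid points occur on a Lebesgue-null set of $\hat\theta$. SURE is piecewise quadratic in $\hat\theta$ and not identically equal across distinct $\lambda$.
\item \emph{Ridge.} SURE is real-analytic in $\lambda$ on $(0,\infty)$ for each $\hat\theta$ and continuous at the endpoints. We show that coincidence of two distinct local-min values is a nontrivial analytic condition in $\hat\theta$, hence of measure zero.
\end{itemize}

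Granting well-separation, $\lambda_n^*\to_d\lambda^*$ jointly with the processes, so $\hat\theta_n^*\to_d \hat\theta^*$. Uniform convergence of $\bar L_n$ to $\tfrac12\|\cdot-\theta_0\|^2$ on compacts, together with tightness of $\hat\theta_n^*$, then yields the theorem.

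The main obstacle is the generic well-separation for Ridge. There, multiple local minima genuinely occur, and we must exclude ties in value on a positive-measure set of $\hat\theta$, including ties involving the endpoints $0$ and $\infty$. A secondary difficulty is the uniformity over $\lambda$ in Step 2 near $\lambda\to\infty$.
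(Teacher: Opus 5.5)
Your architecture matches the paper's (fixed-$\lambda$ approximation, uniform convergence of $CV_n-c_n$ to $SURE$, continuity of the argmin, uniform convergence of $\bar L_n$ on compacts). However, the well-separation step is false as stated for Lasso, and it is missing its key idea for Ridge.

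\emph{Lasso.} Ties between grid points do \emph{not} occur only on a null set of $\hat\theta$. Suppose two grid points $\lambda<\lambda'$ both give full shrinkage, i.e.\ $\|A'\hat\theta\|_\infty<\lambda$, so that $\hat\theta^\lambda=\hat\theta^{\lambda'}=0$. Then $J=\emptyset$, $g^\lambda(\hat\theta)=g^{\lambda'}(\hat\theta)=-\hat\theta$ and $\nabla g^\lambda=\nabla g^{\lambda'}=-I$. Both $SURE$ values therefore equal $\tfrac12(\|\hat\theta\|^2-\trace(\Sigma))$ on an open set: piecewise quadratics that differ globally can still agree on a whole piece. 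Near $\hat\theta=0$ this common value is the global minimum. Indeed, a nonempty active set gives $SURE\geq\trace(P_J\Sigma)-\tfrac12\trace(\Sigma)$ with $P_J=A_J(A_J'A_J)^{-1}A_J'$ and $\trace(P_J\Sigma)>0$. So once $\Lambda$ contains two positive points, the limiting minimizer is non-unique with positive probability. The argmax continuous mapping theorem is then unavailable, and uniform closeness of $CV_n-c_n$ to $SURE$ cannot tell you which tied point is selected. The paper treats exactly this exception separately, by proving continuity of $(\theta,\Delta)\mapsto g^{\tilde\lambda(\theta,\Delta)}$ rather than of $\tilde\lambda$. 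For a.e.\ such $\hat\theta$, the tied block is strictly separated in $SURE$-value from the remaining grid points (ties involving a grid point with a nonempty active set are null). Moreover, every tied $\lambda$ satisfies $g^\lambda(\theta')=-\theta'$ for all $\theta'$ near $\hat\theta$. Hence the tuned estimator converges whichever tied point $CV_n$ picks, and your final step must apply the continuous mapping theorem to the estimator, not to $\lambda_n^*$.

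\emph{Ridge.} Your claim of a ``nontrivial analytic condition in $\hat\theta$'' is where the entire difficulty sits, and it is not yet an argument. Local minimizers of $SURE(\cdot,\hat\theta,\Sigma)$ need not be analytic functions of $\hat\theta$; they are created and annihilated at degenerate critical points. So the tie set is not simply the zero set of one analytic function, and you give no reason why ties cannot persist on an open set. The missing structural fact is that $\hat\theta$ enters $SURE$ only through $\tfrac12\hat\theta'C_\lambda^2\hat\theta$, where $C_\lambda^2=(\tfrac1\lambda A+I)^{-2}$ is strictly increasing in $\lambda$ in the Loewner order. The paper exploits this along rays $\hat\theta=R\nu$: $SURE(\lambda,R,\nu)$ is strictly supermodular in $(\lambda,R)$. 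By Topkis, the minimizer is then monotone in $R$, has at most countably many jumps, and is well separated at every continuity point. Alternatively, you can close your own route with an envelope argument. The map $\theta\mapsto\min_{\lambda\in\Lambda}SURE(\lambda,\theta,\Sigma)-\tfrac12\|\theta\|^2$ is a minimum of concave quadratics, hence concave and differentiable almost everywhere. At a point of differentiability, all minimizers $\lambda$ must yield the same vector $C_\lambda^2\theta$, which forces uniqueness when $\theta\neq0$. Uniqueness on the compactified $\Lambda$ plus continuity in $\lambda$ then gives well-separation.

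\emph{Step 2 for Lasso.} Separately, the fact that $\hat\theta$ avoids the kinks of $g^\lambda$ only concerns the limit, and your Newton step with $\lambda\nabla^2\pi$ is undefined for Lasso. What is actually needed is that, with probability tending to one, all $n$ leave-one-out fits and their quadratic surrogates share the sign pattern of the full-sample fit simultaneously in $i$. The paper obtains this from union bounds based on the fourth-moment conditions.
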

In words, the distribution of loss of the penalized ERM estimator tuned using n-fold cross-validation converges to the distribution of squared error of the corresponding shrinkage estimator in the normal means model, tuned by Stein's Unbiased Risk Estimate.
A special case of these limiting estimators are James-Stein shrinkage estimators, for which closed form characterizations of the risk function are known \citep{stein1981estimation}.
An immediate corollary of Theorem~\ref{theo:risk_convergence} is the convergence of risk functions, subject to possible truncation of tail events.\footnote{Truncation is necessary because, even for estimators such as linear OLS regression, risk is typically undefined, since some eigenvalues of the design matrix might be close to 0, so that the moments of $\hat\theta_n$ might not exist.}
\begin{corollary}
  \label{cor:risk_convergence}
Let $M>0$. Then
$$
E\left[ \min\left(\bar L_n(\hat \theta_n^*,\theta_0), M \right) \right] \rightarrow
E\left[ \min\left(\tfrac12 \|\hat \theta^* - \theta_0\|^2,M\right) \right].
$$
\end{corollary}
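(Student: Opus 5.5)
The corollary follows from Theorem~\ref{theo:risk_convergence} by the portmanteau characterization of convergence in distribution. The function $h_M(x) = \min(x, M)$ is continuous and bounded on $[0,\infty)$. We may also extend it to all of $\mathbb{R}$ by $h_M(x)=\min(\max(x,-M'),M)$ for any fixed $M'$; this handles the possibility that $\bar L_n(\hat\theta_n^*,\theta_0)$ is slightly negative in finite samples. Convergence in distribution $X_n \rightarrow_d X$ is equivalent to $E[f(X_n)] \to E[f(X)]$ for every bounded continuous $f$. Applying this with $X_n = \bar L_n(\hat \theta_n^*,\theta_0)$, $X = \tfrac12\|\hat\theta^*-\theta_0\|^2$ and $f = h_M$ gives the claim directly.

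The plan therefore has two steps. First, I would check that truncation at $M$ is harmless on the lower side. The limit $X$ is nonnegative, and by definition $\bar L_n(\theta,\theta_0) = n E[l_n(\theta,Z)-l_n(\theta_0,Z)] \ge 0$, because $\theta_0$ minimizes expected loss (Assumption~\ref{as:sequence}). Evaluating at the random $\hat\theta_n^*$, with the expectation taken over a fresh draw $Z$ only, gives a nonnegative random variable. So $\min(\cdot,M)$ is a bounded continuous function on the common support $[0,\infty)$. Second, I would invoke the portmanteau lemma (or equivalently the continuous mapping theorem followed by bounded convergence via Skorokhod representation) to obtain $E[\min(X_n,M)] \to E[\min(X,M)]$.

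There is essentially no obstacle here. The only point requiring care is the interpretation of $\bar L_n(\hat\theta_n^*,\theta_0)$: it must mean $\bar L_n$ evaluated at the random argument $\hat\theta_n^*$, so that it is a random variable whose distribution is governed by Theorem~\ref{theo:risk_convergence}. All the substantive work — uniform convergence of $CV_n$ to SURE, well-separation of the SURE minimizer, and the argmin continuous mapping — is contained in Theorem~\ref{theo:risk_convergence}, which we take as given. The truncation is exactly what lets us avoid any uniform integrability argument, which would otherwise fail because moments of $\hat\theta_n$ need not exist.
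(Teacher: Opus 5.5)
Your proposal is correct and matches the paper's argument: the paper simply calls the corollary immediate from Theorem~\ref{theo:risk_convergence}, and that step is exactly your portmanteau application with the bounded continuous function $x \mapsto \min(x,M)$. You also note that $\bar L_n(\cdot,\theta_0)\ge 0$ by Assumption~\ref{as:sequence}, which makes the auxiliary lower truncation at $-M'$ unnecessary and ensures your bounded function agrees with $\min(\cdot,M)$ on the support; the paper leaves this detail implicit.
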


\subsection{Outline of proof}
We will build up our argument that proves Theorem~\ref{theo:risk_convergence} in a series of lemmas.
Before doing so, however, we first provide an intuitive sketch of our argument, while neglecting remainder terms. Subsequently, we will prove that these remainder terms are indeed asymptotically negligible.

\paragraph{Influence function approximation}
We start by noting that empirical risk is asymptotically equivalent to quadratic error loss relative to the sample mean $\tilde \theta_n$,
\bals
L_n(\theta) &\approx const.
+ \tfrac12 \|\theta-\tilde \theta_n\|^2,
&\tilde \theta_n &= \theta_0 + \tfrac{1}{\sqrt{n}} \sum_i X_n^i,& 
\eals
where $$X_n^i = -\nabla_\beta l(\theta_0/\sqrt{n}, Z_n^i)$$ is the influence function.
Recall that we have normalized the Hessian in Assumption~\ref{as:smoothloss}, which simplifies the expression for $\tilde \theta_n$.
This approximation of empirical risk immediately implies an asymptotic linear approximation of the empirical risk minimization (ERM) estimator, $\hat \theta_n \approx \tilde \theta_n$.
These are standard approximations that deliver asymptotic normality of ERM estimators; see for instance Theorem 5.21 in \cite{van2000asymptotic}.

We then get the corresponding approximation for the penalized ERM estimator, for fixed $\lambda$,
\bals
\hat \theta_n^\lambda &\approx \tilde \theta_n^{\lambda} = \tilde \theta_n + g^\lambda(\tilde \theta_n),
\eals
where we recall the definition $g^\lambda(\theta) = \argmin_g \tfrac12\|g\|^2 + \lambda  \cdot \pi(\theta + g)$.

\paragraph{Convergence of CV to SURE}
An analogous approximation holds for leave-one-out (LOO) loss. 
To obtain the LOO sample mean, the influence function $\tfrac{1}{\sqrt{n}} X_n^i$ is subtracted from the sample mean $\tilde \theta_n$, which gives
\bals
  L_n^{-i}(\theta) &\approx const.
  + \tfrac12 \|\theta-\tilde \theta_n^{-i}\|^2,&
  \textrm{where } \tilde \theta_n^{-i} &= \tilde \theta_n - \tfrac{1}{\sqrt{n}} X_n^i.
\eals
The penalized LOO estimator is then approximately given by
$$
\hat \theta^{\lambda,-i}_n \approx \tilde \theta_n^{-i} + g^\lambda(\tilde \theta_n^{-i}) \approx \tilde \theta_n^{\lambda} - \tfrac{1}{\sqrt{n}}(I+\nabla g^{\lambda}(\tilde \theta_n)) \cdot X_n^i.
$$
In the last step we have replaced $g^\lambda$ by its first-order Taylor approximation around $\tilde \theta_n$, at points $\tilde \theta_n$ where $g^\lambda$ is differentiable. 
(This approximation won't hold at kink-points of $g^\lambda$, which exist for Lasso penalties, in particular.)

The n-fold cross-validation estimator of the risk of $\hat \theta^{\lambda}_n$ can be approximated by
$$
  CV_n(\lambda) =\sum_i l_n(\hat \theta^{\lambda,-i}_n,Z_n^i)
  \approx const. + \tfrac{1}{2n} \sum_i  \|\hat \theta^{\lambda,-i}_n - \theta_0 - \sqrt{n} X_n^i\|^2.
$$
When we take this expression, plug in the approximate form of $\hat \theta^{\lambda,-i}_n$, multiply out the inner products, and omit terms which do not depend on $\lambda$, we obtain
\bals
CV_n(\lambda)
&\approx const. + \tfrac{1}{2n} \sum_i  \|\underbrace{\tilde \theta_n + g^\lambda(\tilde \theta_n) - \tfrac{1}{\sqrt{n}}(I+\nabla g^{\lambda}(\tilde \theta_n)) \cdot X_n^i}_{\approx\hat \theta^{\lambda,-i}_n } - \theta_0 - \sqrt{n} X_n^i\|^2\\
&\approx const.+\tfrac{1}{2n} \sum_i \|g^\lambda(\tilde \theta_n)\|^2
+ \tfrac1n \sum_i \langle \nabla g^{\lambda}(\tilde \theta_n) \cdot X_n^i, X_n^i \rangle\\
&\approx const. + \tfrac12\| g^\lambda(\hat \theta_n)\|^2  +
\trace  ( \nabla g^\lambda(\hat \theta_n)\cdot \hat\Sigma_n )\\
 &= const. + SURE(\lambda, \hat \theta_n, \hat \Sigma_n)\\
  &\approx const. + SURE(\lambda, \hat \theta_n, \Sigma),
\eals
where $\hat\Sigma_n$ is the sample second moment of $X_n$.
In the second line, $const.$ subsumes any terms that do not depend on $\lambda$, while the approximation omits terms that depend on $\lambda$ but are of order $1/\sqrt{n}$.
This approximation to $CV_n$ has the form of Stein's Unbiased Risk Estimate.
The first term in this approximation to $CV_n(\lambda)$ corresponds to the average in-sample error, the second term has the form of a covariance penalty \citep{efron2004estimation}.

\paragraph{Convergence of tuning parameter and tuned estimators}
We will need to show that this approximation is uniformly valid in $\lambda$.
We will furthermore need to show that uniform proximity of $CV_n$ to $SURE$ is enough to guarantee proximity of the corresponding optimized tuning parameters,
$$
  \argmin_\lambda CV_n(\lambda) \approx \argmin_\lambda SURE(\lambda, \hat \theta_n, \hat \Sigma_n).
$$
\begin{figure}[t]
  \caption{Examples of multi-modality of $SURE$}
  \label{fig:multimodality}
  \vspace{5pt}
  \begin{minipage}{0.5\textwidth}
    \centering
    Ridge\\
  \begin{tikzpicture}
    \begin{axis}[
        xlabel={$\lambda$},
        ylabel={SURE},
        axis x line=middle,
        axis y line=left,
        axis line style={-latex},
        width=5cm,
        height=5cm,
        ymin=1.65,
        ymax=2,
        xmin = -5,
        xlabel style={below right},
    ]
    \addplot[
        domain=0:49.8,
        samples=250,
        thick
    ] 
    {.5*((1 - 1 / (1 + x))^2 * 1.3893^2 + 2 * (1 / (1 + x)) +
     (1 - 40 / (40 + x))^2 * 1.5^2 + 2 * (40 / (40 + x)))};
    \end{axis}
\end{tikzpicture}
\end{minipage}%
\begin{minipage}{0.5\textwidth}
  \centering
  Lasso\\
\begin{tikzpicture}
    \begin{axis}[
        xlabel={$\lambda$},
        ylabel={SURE},
        axis x line=middle,
        axis y line=left,
        axis line style={-latex},
        width=5cm,
        height=5cm,
        ymin=1.8,
        ymax=4.2,
        xmin = -.1,
        xlabel style={below right},
    ]
    \addplot[
        domain=0:2.45,
        samples=491,
        thick
    ] 
    {.5*min(abs(x), 1/sqrt(8))^2 + (abs(x) <= 1/sqrt(8)) +
     .5*min(abs(x), 3/sqrt(8))^2 + (abs(x) <= 3/sqrt(8)) +
     .5*min(abs(x), 2)^2 + (abs(x) <= 2)};
    \end{axis}
\end{tikzpicture}
\end{minipage}
  \textit{Notes:} These plots show examples of multi-modality for SURE, for the case of $L^2$ penalties (Ridge) and $L^1$ penalties (Lasso). Both examples are reproduced from \cite{wilsonkasymackey2018}.
  \end{figure}
This latter step is non-trivial, because the criterion function $SURE(\lambda, \hat \theta_n, \hat \Sigma_n)$ typically has multiple local minima. For certain values of $\hat \theta_n$ this function furthermore has multiple \textit{global} minima. 
When $SURE$ has multiple global (near-)minima, uniform closeness of $CV$ to $SURE$ is not enough to ensure closeness of the minimizer of $CV$ to the minimizer of $SURE$.
The plots in Figure \ref{fig:multimodality}, which are reproduced from \cite{wilsonkasymackey2018},\footnote{The numerical values corresponding to these examples are as follows: 
(a) SURE for Ridge: $\hat \theta = (1.3893, 1.5)$, $L(\theta) =(\theta - \hat \theta)\diag(1,40)(\theta - \hat \theta)$, $\pi(\theta) =  \|\theta\|^2$,
(b) SURE for Lasso: $\hat \theta = \frac{1}{\sqrt{n}}(\sqrt{1/8}, \sqrt{9/8}, 2 )$, $\pi(\theta) = \sum |\theta_j|$.
} illustrate two numerical examples (realizations of $\hat \theta_n$ and of $\hat \Sigma_n$) for which $SURE$ indeed has multiple global minima.

Using separate arguments for Ridge (Appendix \ref{sec:ridge}) and Lasso (Appendix \ref{sec:lasso}), we will prove, however, that the global minimum of $SURE$ with respect to $\lambda$ is unique and well separated almost everywhere, in a suitable sense. Put differently, cases such as those represented in Figure \ref{fig:multimodality} are non-generic, such that they do not lead to a breakdown of convergence for the optimized tuning parameter.
The arguments proving that multiple global minima only occur on a set of measure $0$ are the most non-standard part of our proof

Well-separation ensures that the $\argmin$ functional is continuous at almost every realization of $\hat\theta$.
From these results we thus conclude that the mapping from $\hat \theta_n$ and $CV_n$ to the tuned estimate $\hat \theta_n^*$ is almost everywhere continuous. This allows us to invoke the continuous mapping and dominated convergence theorems, and to conclude the proof of Theorem~\ref{theo:risk_convergence}.

\subsection{Intermediate lemmas}

Let us now turn to a more formal exposition of our argument.
We will prove Theorem~\ref{theo:risk_convergence} in a series of Lemmas.
The lemmas are stated in this section, their proofs in the appendices.
All results impose the assumptions stated in Section~\ref{sec:setup}.
For transparency, \autoref{tab:assumptions} records which assumptions each result invokes directly, beyond those inherited through the results it builds on. 

\begin{table}[t]
\centering
\caption{Assumptions invoked by each result.}
\label{tab:assumptions}
\vspace{4pt}
\resizebox{\textwidth}{!}{%
\onehalfspacing
\begin{tabular}{lll}
\toprule
Result & Assumptions invoked & Builds on \\
\midrule
Lemma~\ref{lem:lipschitz} (Lipschitz $g^\lambda$) & convexity of $\pi$ & --- \\
Lemma~\ref{lem:ifrep} (Influence-function approx.) & 1, 2, 4.1, 4.2, 5.2 & Lemma~\ref{lem:lipschitz} \\
Lemma~\ref{lem:limitingloss} (Limiting squared-error loss) & 1, 2, 5.3 & --- \\
Corollary~\ref{cor:asymptotic_penalizederm} (Asymptotic distribution) & 1, 4.1 & Lemmas~\ref{lem:lipschitz},~\ref{lem:ifrep} \\
Lemma~\ref{lem:convergence_cv} (CV $\to$ SURE) & 1, 2, 3, 5.1--5.4 & Lemmas~\ref{lem:lipschitz},~\ref{lem:ifrep}, Cor.~\ref{cor:asymptotic_penalizederm} \\
Lemma~\ref{lem:convergencetuned} (Tuning-parameter convergence) & 1, 3 & Lemmas~\ref{lem:ifrep},~\ref{lem:convergence_cv}, Cor.~\ref{cor:asymptotic_penalizederm} \\
Theorem~\ref{theo:risk_convergence} (Risk convergence) & all & Lemmas~\ref{lem:limitingloss},~\ref{lem:convergencetuned} \\
Corollary~\ref{cor:risk_convergence} (Convergence of expected loss) & all & Theorem~\ref{theo:risk_convergence} \\
\bottomrule
\end{tabular}%
}
\end{table}

\begin{lemma}[Lipschitz $g^\lambda$]
  \label{lem:lipschitz}
  For any $\lambda \geq 0$, if $\pi( \cdot )$ is convex then $g^\lambda(\theta)= \argmin_g \tfrac12\|g\|^2 + \lambda  \cdot \pi(\theta + g)$ is Lipschitz with Lipschitz constant 1.
\end{lemma}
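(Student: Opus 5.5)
Write $g^\lambda(\theta)=\mathrm{prox}_{\lambda\pi}(\theta)-\theta$, where $\mathrm{prox}_{\lambda\pi}(\theta)=\argmin_u \tfrac12\|u-\theta\|^2+\lambda\pi(u)$; this follows from the substitution $u=\theta+g$. Because $\lambda\pi$ is convex (and proper, lower semicontinuous), the objective $u\mapsto \tfrac12\|u-\theta\|^2+\lambda\pi(u)$ is $1$-strongly convex. Hence the minimizer exists and is unique, so $g^\lambda$ is well defined. The claim is then the classical fact that $\theta\mapsto \theta-\mathrm{prox}_{\lambda\pi}(\theta)$ is nonexpansive. For $\lambda=0$ it is trivial, since $g^0\equiv 0$.

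The key step is to show that the proximal map $P:=\mathrm{prox}_{\lambda\pi}$ is \emph{firmly} nonexpansive:
$$\|P\theta_1-P\theta_2\|^2\le \langle P\theta_1-P\theta_2,\theta_1-\theta_2\rangle .$$
To get this, write the optimality conditions $\theta_k-P\theta_k\in\lambda\,\partial\pi(P\theta_k)$ for $k=1,2$. Monotonicity of the subdifferential of a convex function then gives
$$\langle (\theta_1-P\theta_1)-(\theta_2-P\theta_2),\,P\theta_1-P\theta_2\rangle\ge 0,$$
which is exactly firm nonexpansiveness. If one prefers to avoid subdifferentials (e.g.\ for nonsmooth Lasso penalties), add the two strong-convexity inequalities
$$F_{\theta_1}(P\theta_2)\ge F_{\theta_1}(P\theta_1)+\tfrac12\|P\theta_2-P\theta_1\|^2$$
and the symmetric one for $F_{\theta_2}$, where $F_\theta(u)=\tfrac12\|u-\theta\|^2+\lambda\pi(u)$. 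The $\pi$ terms cancel, and expanding the quadratics gives the same inequality.

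Finally, set $d=\theta_1-\theta_2$ and $p=P\theta_1-P\theta_2$. Then $g^\lambda(\theta_1)-g^\lambda(\theta_2)=p-d$, and
$$\|p-d\|^2=\|d\|^2-2\langle p,d\rangle+\|p\|^2\le \|d\|^2-\|p\|^2\le\|d\|^2,$$
where the first inequality uses $\|p\|^2\le\langle p,d\rangle$ from the key step. This shows $g^\lambda$ is $1$-Lipschitz. (It also shows $\theta\mapsto\theta+g^\lambda(\theta)$ is $1$-Lipschitz, which will be useful later.)

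The only real obstacle is the firm nonexpansiveness step, specifically making it rigorous without smoothness of $\pi$. The strong-convexity addition trick handles this cleanly and needs only convexity and finiteness of $\pi$ near the relevant points, which holds for both the Ridge and Lasso penalties of Assumption~\ref{as:ridgelasso}.
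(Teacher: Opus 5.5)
Your proof is correct and is essentially the paper's argument. Both apply monotonicity of $\partial\pi$ to the two optimality conditions to get $\langle \Delta g,\Delta\theta+\Delta g\rangle\le 0$, which is exactly your firm nonexpansiveness of the prox. The paper then finishes with Cauchy--Schwarz, where you instead expand $\|p-d\|^2$. Your subdifferential-free strong-convexity variant and your check that the minimizer exists and is unique are small additions that the paper leaves implicit.
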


\begin{lemma}[Influence function approximation]
  \label{lem:ifrep}
  \be
  L_n(\theta) -  L_n(\theta_0) = \tfrac12 \|\theta - \tilde \theta_n\|^2  - \tfrac12 \|\tilde \theta_n - \theta_0\|^2 + \epsilon_n(\theta),
  \label{eq:loss_if}
  \ee
  where $\sup_{\theta: \| \theta \| \le C} \epsilon_n(\theta) = o_{\mu_n}(1)$ and  $\sup_{\theta: \| \theta \| \le C} \nabla \epsilon_n(\theta) = o_{\mu_n}(1)$ for any $C < \infty$, and
  \bals
  \tilde \theta_n &= \theta_0 + \tfrac{1}{\sqrt{n}} \sum_i X_n^i,  &
  X_n^i &= -\nabla_\beta l(\theta_0/\sqrt{n}, Z_n^i).
  \eals
  The ERM and penalized ERM estimators
  \bals
    \hat \theta_n &= \argmin_\theta L_n(\theta), &
  \hat \theta_n^\lambda &= \argmin_\theta \left[L_n(\theta) + \lambda  \cdot \pi(\theta)\right]
  \eals
  satisfy
  \bals
    \hat \theta_n &= \tilde \theta_n+ o_{\mu_n}(1),&
   \sup_\lambda  \Vert  \hat \theta_n^\lambda &- \tilde \theta_n - g^\lambda(\tilde \theta_n) \Vert = o_{\mu_n}(1).
   \eals
\end{lemma}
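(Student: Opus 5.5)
Our plan has three steps: first establish the quadratic expansion \eqref{eq:loss_if} uniformly on compacts, then deduce the ERM approximation by an argmin-continuity argument, and finally handle the penalized estimator uniformly in $\lambda$ using Lemma~\ref{lem:lipschitz}.

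\textbf{Step 1: the expansion.} We write
\[
L_n(\theta)-L_n(\theta_0)=\bar L_n(\theta,\theta_0)+\big(\mathbb G_n\text{-term}\big),
\]
where the second term is the centered empirical process
\[
\sum_i\big[l_n(\theta,Z_n^i)-l_n(\theta_0,Z_n^i)-E(\cdot)\big].
\]
For the deterministic part, the first-order condition $\nabla_\theta\bar L_n(\theta_0,\theta_0)=0$ holds by Assumption~\ref{as:sequence}. A second-order expansion, together with Assumption~\ref{as:smoothloss} and the Hessian Lipschitz condition in Assumption~\ref{as:loss_fn_bounds}.2, gives
\[
\bar L_n(\theta,\theta_0)=\tfrac12\|\theta-\theta_0\|^2+o(1)
\]
uniformly on $\|\theta\|\le C$. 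For the stochastic part, we follow the proof of Theorem 5.23 in \cite{van2000asymptotic}. Set $\beta=\theta/\sqrt n$ and compare $\sqrt n\,[l(\beta,Z)-l(\beta_0,Z)]$ with its linearization $-(\theta-\theta_0)\cdot X_n^i/\sqrt n\cdot\sqrt n$. The Lipschitz condition in Assumption~\ref{as:ERM_conditions}.1 and the differentiability of the loss give a Lipschitz-in-parameter class whose envelope has bounded variance, so the class is Donsker uniformly in $n$ (bracketing entropy over the compact $\|\theta\|\le C$). The remainder process then has variance tending to zero pointwise, which gives
\[
\sup_{\|\theta\|\le C}\Big|\mathbb G_n\text{-term}-(\theta-\theta_0)\cdot\tfrac1{\sqrt n}\textstyle\sum_i X_n^i\Big|=o_{\mu_n}(1).
\]
Completing the square, $\tfrac12\|\theta-\theta_0\|^2-(\theta-\theta_0)\cdot(\tilde\theta_n-\theta_0)$ equals $\tfrac12\|\theta-\tilde\theta_n\|^2-\tfrac12\|\tilde\theta_n-\theta_0\|^2$, which is \eqref{eq:loss_if}.

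\textbf{Step 1b: the gradient remainder.} The claim on $\nabla\epsilon_n$ requires a separate argument. We have
\[
\nabla\epsilon_n(\theta)=\nabla L_n(\theta)-(\theta-\tilde\theta_n).
\]
Expanding $\nabla L_n(\theta)$ around $\theta_0$, the difference equals
\[
\Big(\tfrac1n\sum_i n\nabla^2 l_n(\bar\theta,Z_n^i)-I\Big)(\theta-\theta_0).
\]
The averaged Hessian converges to $I$ by a law of large numbers, using the second-moment and Lipschitz bounds on the Hessians in Assumption~\ref{as:loss_fn_bounds}.2. Uniformity over compacts follows from the Lipschitz modulus $\tfrac1n\sum_i C_n(Z_n^i)\cdot\|\theta-\theta'\|/\sqrt n$, which is $O_p(1/\sqrt n)$.

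\textbf{Step 2: the ERM estimator.} $\hat\theta_n$ is $O_p(1)$ by Assumption~\ref{as:ERM_conditions}.2, and $\tilde\theta_n$ is $O_p(1)$ by the CLT. Hence, with high probability, both lie in a ball of radius $C$. Because $\tilde\theta_n$ uniquely minimizes the quadratic and the curvature is $1$, the standard argmin argument (Theorem 5.23 / Corollary 5.58 style) gives
\[
\tfrac12\|\hat\theta_n-\tilde\theta_n\|^2\le 2\sup_{\|\theta\|\le C}|\epsilon_n|=o_p(1).
\]

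\textbf{Step 3: the penalized estimator, uniformly in $\lambda$.} This is the main obstacle, since $\lambda$ ranges over an unbounded set. We use the first-order conditions. Strong convexity (Assumption~\ref{as:loss_fn_bounds}.1) makes $\hat\theta_n^\lambda$ unique and characterized by
\[
0\in\hat\theta_n^\lambda-\tilde\theta_n+\nabla\epsilon_n(\hat\theta_n^\lambda)+\lambda\,\partial\pi(\hat\theta_n^\lambda).
\]
Equivalently, $\hat\theta_n^\lambda=\operatorname{prox}_{\lambda\pi}\big(\tilde\theta_n-\nabla\epsilon_n(\hat\theta_n^\lambda)\big)$, where $\operatorname{prox}_{\lambda\pi}(x)=x+g^\lambda(x)$. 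The map $\operatorname{prox}_{\lambda\pi}$ is firmly nonexpansive, which is essentially the content of Lemma~\ref{lem:lipschitz}. Therefore
\[
\big\|\hat\theta_n^\lambda-\tilde\theta_n-g^\lambda(\tilde\theta_n)\big\|\le\big\|\nabla\epsilon_n(\hat\theta_n^\lambda)\big\|.
\]
It remains to confine $\hat\theta_n^\lambda$ to a fixed ball uniformly in $\lambda$. Since $\pi$ is minimized at $0$, the penalized minimizer satisfies
\[
L_n(\hat\theta_n^\lambda)\le L_n(\hat\theta_n^\lambda)+\lambda\pi(\hat\theta_n^\lambda)-\lambda\pi(0)\le L_n(0).
\]
Strong convexity of $L_n$ then gives
\[
\tfrac\mu2\|\hat\theta_n^\lambda-\hat\theta_n\|^2\le L_n(0)-L_n(\hat\theta_n)=O_p(1)
\]
by Step 1, uniformly in $\lambda$. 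Hence $\sup_\lambda\|\hat\theta_n^\lambda\|=O_p(1)$, and the sup bound on $\nabla\epsilon_n$ over the ball completes the proof.
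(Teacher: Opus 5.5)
Your proposal is correct. Steps 1 and 2 essentially match the paper, which also obtains \eqref{eq:loss_if} by adapting Lemma 19.31 of \cite{van2000asymptotic} and treats the ERM estimator as the case $\lambda=0$. Step 3, however, takes a genuinely different route.

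The paper does not use the first-order condition for $\hat\theta_n^\lambda$. It combines the basic inequality $L_n(\hat\theta_n^\lambda)+\lambda\pi(\hat\theta_n^\lambda)\le L_n(\tilde\theta_n^\lambda)+\lambda\pi(\tilde\theta_n^\lambda)$ with strong convexity of the quadratic surrogate $\tfrac12\|\theta-\tilde\theta_n\|^2+\lambda\pi(\theta)$ around its minimizer $\tilde\theta_n^\lambda$. This gives $\tfrac12\|\hat\theta_n^\lambda-\tilde\theta_n^\lambda\|^2\le\epsilon_n(\tilde\theta_n^\lambda)-\epsilon_n(\hat\theta_n^\lambda)$. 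That argument needs only the function-value remainder, not $\nabla\epsilon_n$. Given boundedness of $\hat\theta_n^\lambda$, it also needs no convexity of $L_n$. But it delivers only the square-root rate $(\sup|\epsilon_n|)^{1/2}$.

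Your prox argument needs more: the gradient remainder from Step 1b, and strong convexity of $L_n$ (Assumption~\ref{as:loss_fn_bounds}.1, which Table~\ref{tab:assumptions} does not list for this lemma). In return it yields the linear bound $\sup_\lambda\|\hat\theta_n^\lambda-\tilde\theta_n^\lambda\|\le\sup_{\|\theta\|\le C}\|\nabla\epsilon_n(\theta)\|$. This is exactly what the paper re-derives later, via Lemma~1 of \cite{wilsonkasymackey2018}, to obtain the rate \eqref{eq:pf:if_rate}. Likewise, your Step 1b is the argument the paper itself uses for \eqref{eq:pf:grad_eps_rate}.

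Your confinement step is a genuine improvement. Both routes need $\sup_\lambda\|\hat\theta_n^\lambda\|=O_{\mu_n}(1)$. The paper simply attributes this to Assumption~\ref{as:ERM_conditions}.2, which concerns only the unpenalized $\hat\theta_n$. Your bound $L_n(\hat\theta_n^\lambda)\le L_n(0)$, combined with strong convexity, actually proves it, uniformly in $\lambda$.

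Three small slips:
\begin{itemize}
\item The linearization display should read $+(\theta-\theta_0)\cdot\tfrac1{\sqrt n}\sum_i X_n^i$; your completed square has the right sign.
\item The single intermediate point $\bar\theta$ should be replaced by the integral form of the mean value theorem, since $\nabla L_n$ is vector-valued.
\item The Hessian Lipschitz bound is Assumption~\ref{as:loss_fn_bounds}.3, not Assumption~\ref{as:loss_fn_bounds}.2.
\end{itemize}
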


\begin{lemma}[Limiting squared error loss]
  \label{lem:limitingloss}
  The limiting expected loss is well defined and given by
  $$\bar L(\theta, \theta_0) =\tfrac12 \|\theta - \theta_0\|^2.$$
  Convergence of $\bar L_n(\theta, \theta_0)$ to this limit is uniform in any bounded neighborhood of $\theta_0$: $\sup_{\theta: \|\theta - \theta_0\| \leq C} |\bar L_n(\theta, \theta_0) - \bar L(\theta, \theta_0)| \to 0$ for all $C<\infty$.
\end{lemma}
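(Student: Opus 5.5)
The plan is to write the population objective $F_n(\beta) = E[l(\beta, Z_n^1)]$ and express the scaled expected loss as $\bar L_n(\theta,\theta_0) = n\,[F_n(\theta/\sqrt{n}) - F_n(\theta_0/\sqrt{n})]$. We then Taylor expand $F_n$ around its minimizer $\beta_n = \theta_0/\sqrt{n}$. The expansion rests on three ingredients. First, since $\beta_n$ minimizes $F_n$ (Assumption~\ref{as:sequence}), the first-order term vanishes: $\nabla F_n(\beta_n) = E[\nabla_\beta l(\beta_n, Z_n^1)] = 0$. Differentiation under the expectation is justified by the Lipschitz condition of Assumption~\ref{as:ERM_conditions}.1 together with dominated convergence. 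Second, writing $\theta = \theta_0 + h$, we obtain
$$\bar L_n(\theta,\theta_0) = \tfrac12\, h\cdot \bar H_n(\theta)\, h,\qquad \bar H_n(\theta) = \int_0^1 2(1-t)\, E\left[ n\nabla_\theta^2 l_n(\theta_0 + t h, Z_n^1)\right] dt ,$$
where we use that $\nabla_\theta^2 l_n(\theta,z) = \tfrac1n \nabla^2_\beta l(\theta/\sqrt n,z)$. Third, we must show that $E[n\nabla_\theta^2 l_n(\theta, Z_n^1)] \to I$ uniformly on $\{\|\theta-\theta_0\|\le C\}$.

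For the third step we use Assumption~\ref{as:loss_fn_bounds}.3. That assumption gives $\|n\nabla^2_\theta l_n(\theta,Z) - n\nabla^2_\theta l_n(\theta',Z)\| \le C_n(Z)\|\theta-\theta'\|$ with $\sup_n E[C_n^2]<\infty$. Hence $\theta\mapsto E[n\nabla^2_\theta l_n(\theta,Z_n^1)]$ is Lipschitz in $\theta$ with a constant uniform in $n$. It is also bounded at $\theta_0$, since the second-moment condition in Assumption~\ref{as:loss_fn_bounds}.3 combined with i.i.d.\ sampling bounds $E\|n\nabla^2 l_n(\theta_0,Z)\|$. By the same Lipschitz bound, the Hessians on the ball differ from the one at $\theta_0$ by at most $\sup_n E[C_n]\cdot C/\sqrt{n}\cdot$ in $\beta$-scale. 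Rescaled back to $\theta$, this gives $E[n\nabla^2 l_n(\theta)] = E[n\nabla^2 l_n(\theta_0)] + O(\|\theta-\theta_0\|)$ uniformly in $n$.

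This is where the main obstacle lies. The Lipschitz bound alone does not make the Hessian constant on the ball; it only gives equicontinuity. To close the gap I would use the fact that $\bar L(\theta,\theta_0)$ is assumed well defined (Definition~\ref{def:loss}) and has Hessian $I$ at $\theta_0$ (Assumption~\ref{as:smoothloss}). The family $\theta\mapsto E[n\nabla^2 l_n(\theta)]$ is equi-Lipschitz and bounded, so by Arzel\`a--Ascoli every subsequence has a uniformly convergent sub-subsequence on compacts. Along such a subsequence $\bar L_n$ converges in $C^2$ to a limit, which must equal $\bar L$, so $\bar L$ is $C^{1,1}$ with $\nabla\bar L(\theta_0)=0$.

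Identifying the Hessian limit as exactly $I$ everywhere on the ball requires more than the normalization at $\theta_0$. I would argue that in $\beta$-coordinates the ball shrinks to the single point $0$ at rate $1/\sqrt n$. If the Lipschitz constant of $\nabla^2_\beta l$ in $\beta$ is uniformly integrable, then the $\theta$-scale variation is $O(1/\sqrt n)$ and vanishes. This is the content of Assumption~\ref{as:loss_fn_bounds}.4: the bound on $n^2 D^4_\theta l_n$ is a bound of order $n^{-1}$ on the scale-free fourth derivative in $\beta$. Via a Taylor expansion with a third-derivative term controlled by interpolation, it implies that the $\theta$-Hessians vary by $o(1)$ over bounded sets. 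Therefore $E[n\nabla^2 l_n(\theta)] \to \nabla^2\bar L(\theta_0,\theta_0)= I$ uniformly on the ball.

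Plugging this into the integral representation gives $\sup_{\|h\|\le C}|\bar L_n - \tfrac12\|h\|^2| \le \tfrac12 C^2 \sup\|\bar H_n - I\| \to 0$. This simultaneously shows that the limit is well defined and equals $\tfrac12\|\theta-\theta_0\|^2$. The step I expect to be delicate is exactly this uniform identification of the Hessian limit. If the fourth-derivative route fails, I would instead assume (or argue from the scaling) that the $\beta$-Hessian $E\nabla^2_\beta l(\beta,Z_n)$ is equicontinuous at $\beta=0$ uniformly in $n$, which yields the result immediately.
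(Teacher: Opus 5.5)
Your skeleton matches the paper's. You expand $\bar L_n(\cdot,\theta_0)$ to second order at its minimizer $\theta_0$, use $\nabla_\theta^2\bar L_n(\theta,\theta_0)=\nabla_\beta^2E[l(\beta,Z_n^1)]\big|_{\beta=\theta/\sqrt n}=:H_n(\theta)$, and show $H_n\to I$ uniformly on $\{\|\theta-\theta_0\|\le C\}$. The gap is that your main argument for this last step fails. Assumption~\ref{as:loss_fn_bounds}.4 bounds $D^4_\beta l=n^2D^4_\theta l_n$ uniformly in $n$. But the $\beta$-scale Lipschitz constant of $\nabla^2_\beta l$ near $0$ is governed by $D^3_\beta l$ at $\beta_n$. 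Item 4 says nothing about that quantity, and item 3 only gives $\|E[D^3_\beta l(\beta_n,Z_n^1)]\|=O(\sqrt n)$. Concretely, items 3 and 4 yield $H_n(\theta)=H_n(\theta_0)+T_n[\theta-\theta_0]+O(1/n)$ on the ball, where $T_n=n^{-1/2}E[D^3_\beta l(\beta_n,Z_n^1)]$ is merely bounded. Nothing local forces $T_n\to 0$. For example, a term $c\sqrt n\,\beta^3$ in the loss near $\beta=0$ adds $6c(\theta-\theta_0)$ to $H_n(\theta)-H_n(\theta_0)$, has constant $C_n$ and zero fourth derivative, and leaves a non-quadratic limit. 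An interpolation bound $\|T_n\|\lesssim \sup_{\|\theta-\theta_0\|\le r}\|H_n(\theta)-H_n(\theta_0)\|/r+r/n$ gives $o(1)$ only with $r\asymp\sqrt n$. That requires control of $H_n$ on a fixed $\beta$-neighborhood of $0$ (say from a global two-sided reading of item 1, transferred from $L_n$ to its expectation), which you do not supply. The Arzel\`a--Ascoli step does not close the gap either: it shows $H_n\to\nabla^2\bar L$, but pins down only $\nabla^2\bar L(\theta_0)=I$. Separately, the $O_{\mu_n}(1)$ sample-average condition in item 3 does not bound $E\|n\nabla^2_\theta l_n(\theta_0,Z_n^1)\|$ along a triangular array.

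Your fallback is essentially what the paper uses. Its proof writes $\|H_n(\theta)-\nabla^2_\beta E[l(0,Z_n^i)]\|\le E[C_n(Z_n^i)]\,\|\theta\|/\sqrt n$, i.e.\ it treats $C_n$ as a Lipschitz constant of $\nabla^2_\beta l$ in $\beta$. Then $H_n$ varies by $O(1/\sqrt n)$ over the ball, and the mean-value expansion finishes the proof. Your sentence about a uniformly integrable $\beta$-scale Lipschitz constant is precisely that step. Your observation about the literal condition is also correct. As written, the condition (Lipschitz in $\theta$ for $n\nabla^2_\theta l_n=\nabla^2_\beta l(\theta/\sqrt n,\cdot)$) yields only $O(\|\theta-\theta_0\|)$, and the factor $1/\sqrt n$ is not delivered by Assumption~\ref{as:loss_fn_bounds}.3. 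So you have found a scaling slip in the paper's proof rather than made one. The clean repair is to drop the fourth-derivative detour and state the $\beta$-scale condition as the operative hypothesis. Then $|\bar L_n(\theta_0+h,\theta_0)-\tfrac12 h\cdot H_n(\theta_0)h|\le K\|h\|^3/\sqrt n$ on the ball. Existence of $\bar L$ plus polarization forces $H_n(\theta_0)$ to converge, which also settles its boundedness. Assumption~\ref{as:smoothloss} identifies the limit as $I$. The uniform bound $\tfrac12C^2\|H_n(\theta_0)-I\|+KC^3/\sqrt n\to 0$ then follows with no compactness argument.
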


\begin{corollary}[Asymptotic distribution for fixed tuning parameter]
  \label{cor:asymptotic_penalizederm}
  The ERM and penalized ERM estimators satisfy 
  \bals
    \hat \theta_n &\rightarrow^d \hat \theta \sim N(\theta_0, \Sigma),&
    \hat \theta_n^\lambda &\rightarrow^d \hat \theta + g^\lambda(\hat \theta).
  \eals
\end{corollary}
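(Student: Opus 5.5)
The plan is to combine a central limit theorem for the linear term $\tilde \theta_n$ with the influence function approximation of Lemma~\ref{lem:ifrep}, and then push the result through the continuous map $\theta \mapsto \theta + g^\lambda(\theta)$.

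\textbf{Step 1: asymptotic normality of $\tilde\theta_n$.} Recall $\tilde \theta_n = \theta_0 + \tfrac{1}{\sqrt{n}}\sum_i X_n^i$ with $X_n^i = -\nabla_\beta l(\theta_0/\sqrt{n}, Z_n^i)$. For each $n$ the $X_n^i$ are i.i.d.\ across $i$ under $\mu_n$ (Assumption~\ref{as:sequence}).

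They have mean zero. Since $\beta_n=\theta_0/\sqrt n$ minimizes $E[l(\beta,Z_n^i)]$, differentiating under the integral gives $E[\nabla_\beta l(\beta_n,Z_n^i)]=0$. This exchange is justified by the Lipschitz domination $m(z)$ with bounded variance in Assumption~\ref{as:ERM_conditions}.1.

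Their variance is $\Sigma$, fixed in $n$ by Assumption~\ref{as:sequence}.

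Because the distribution changes with $n$, I would use the Lindeberg--Feller CLT for triangular arrays. The Lindeberg condition follows from a uniform higher moment. Assumption~\ref{as:loss_fn_bounds}.2 gives $E\|\sqrt n \nabla_\theta l_n(\theta_0,Z_n^i)\|^4 \le M$, and $\sqrt n\nabla_\theta l_n = \nabla_\beta l$, so $E\|X_n^i\|^4\le M$. Lyapunov's condition then holds:
\[
\sum_i E\|X_n^i/\sqrt n\|^4 \le M/n \to 0.
\]
Hence $\tilde\theta_n \to_d N(\theta_0,\Sigma)$. Alternatively, the bounded variance of $m$ in Assumption~\ref{as:ERM_conditions}.1 gives uniform integrability. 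If one wants to stick strictly to the assumptions listed in the table (1 and 4.1), this is the route to take.

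\textbf{Step 2: ERM estimator.} Lemma~\ref{lem:ifrep} gives $\hat\theta_n = \tilde\theta_n + o_{\mu_n}(1)$. Slutsky's lemma then yields $\hat\theta_n \to_d \hat\theta \sim N(\theta_0,\Sigma)$.

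\textbf{Step 3: penalized estimator at fixed $\lambda$.} Lemma~\ref{lem:ifrep} gives
\[
\hat\theta_n^\lambda = \tilde\theta_n + g^\lambda(\tilde\theta_n) + o_{\mu_n}(1).
\]
By Lemma~\ref{lem:lipschitz}, $g^\lambda$ is $1$-Lipschitz, so $\theta\mapsto\theta+g^\lambda(\theta)$ is continuous (indeed $2$-Lipschitz). The continuous mapping theorem applied to Step 1 gives $\tilde\theta_n+g^\lambda(\tilde\theta_n)\to_d \hat\theta+g^\lambda(\hat\theta)$, and Slutsky's lemma handles the remainder.

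Both convergences hold jointly, since everything is a function of the single vector $\tilde\theta_n$. This joint convergence will be convenient later.

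\textbf{Main obstacle.} The only delicate point is the triangular-array CLT: verifying mean zero and the Lindeberg condition uniformly in $n$ under drifting $\mu_n$. Everything else is Slutsky's lemma plus the Lipschitz continuity from Lemma~\ref{lem:lipschitz}.
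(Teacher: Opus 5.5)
Your proof is correct and follows the paper's route: a Lindeberg--Feller CLT for $\tilde\theta_n$, then Lemma~\ref{lem:ifrep} with Slutsky and the continuous mapping theorem, using the $1$-Lipschitz property of $g^\lambda$ from Lemma~\ref{lem:lipschitz}. The only difference is how you verify the Lindeberg condition. You use Lyapunov with the fourth-moment bound of Assumption~\ref{as:loss_fn_bounds}.2, whereas the paper uses $\|X_n^1\|\le m(Z_n^1)$ together with $\sup_n \Var(m(Z_n^i))<\infty$ from Assumption~\ref{as:ERM_conditions}.1. Keep your route. A uniform second-moment bound alone does not give the Lindeberg condition for a triangular array: for example, $X_n^i=\pm\sqrt n$ with probability $1/(2n)$ each and $0$ otherwise has unit variance, but $n^{-1/2}\sum_i X_n^i$ converges to a difference of two independent Poisson$(1/2)$ variables. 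So the ``alternative'' you mention via Assumption~\ref{as:ERM_conditions}.1 does not work as stated.
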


\begin{lemma}[Convergence of CV to SURE]
  \label{lem:convergence_cv}
  There exists a $\lambda$-independent (data-dependent) constant $c_n$ such that the n-fold crossvalidation criterion
  satisfies
  $$
    \sup_{\lambda \in \Lambda} \left|CV_n(\lambda) - c_n - SURE(\lambda, \hat \theta_n, \Sigma)\right| \rightarrow^{\mu_n} 0.
  $$
\end{lemma}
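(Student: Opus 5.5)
The plan follows the heuristic outline of Section~\ref{sec:outline}, made rigorous and uniform in $\lambda$. Write each leave-one-out estimator as a perturbation of the full-sample penalized estimator, expand the held-out loss to second order around $\theta_0$, and collect the $\lambda$-dependent terms. The constant $c_n$ gathers everything that does not depend on $\lambda$, namely $\sum_i l_n(\theta_0,Z_n^i)$ and the pure-noise terms $\tfrac{1}{2n}\sum_i\|X_n^i\|^2$-type pieces, minus $\tfrac12\trace(\Sigma)$.

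\textbf{Step 1: leave-one-out expansion.} Following \cite{wilsonkasymackey2018}, I approximate $\hat\theta_n^{\lambda,-i}$ by a single Newton step from $\hat\theta_n^\lambda$. By Assumption~\ref{as:loss_fn_bounds}.1 (strong convexity, smooth Hessians) and the score and Hessian Lipschitz bounds in Assumptions~\ref{as:loss_fn_bounds}.2--3, this gives
\[
\hat\theta_n^{\lambda,-i} = \hat\theta_n^{\lambda} - \tfrac{1}{\sqrt n}\bigl(I+\nabla g^\lambda(\tilde\theta_n)\bigr)X_n^i + r_n^{i}(\lambda).
\]
Here $\max_i\|r_n^i(\lambda)\|=o_p(n^{-1/2})$ uniformly in $\lambda$; the fourth-moment bound on scores controls $\max_i\|X_n^i\|=o_p(n^{1/4})$. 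For Ridge, $g^\lambda$ is linear, $g^\lambda(\theta)=-\lambda(A+\lambda I)^{-1}\theta$ after normalization, so the expansion is exact up to the loss nonquadratic terms. Uniformity over $\Lambda=\mathbb R^+\cup\{\infty\}$ then follows because $\nabla g^\lambda$ is bounded in norm by $1$ (Lemma~\ref{lem:lipschitz}) and continuous in $\lambda$ on the compactification.

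\textbf{Step 2: expand the CV sum.} I Taylor expand $l_n(\hat\theta_n^{\lambda,-i},Z_n^i)$ around $\theta_0$ to second order, using the fourth-derivative condition, Assumption~\ref{as:loss_fn_bounds}.4, to bound the cubic and higher remainders. The linear term is $-\tfrac{1}{\sqrt n}\langle X_n^i,\hat\theta_n^{\lambda,-i}-\theta_0\rangle$. Substituting Step 1 and summing, the piece involving $\hat\theta_n^\lambda-\theta_0$ gives $-\langle\tilde\theta_n-\theta_0,\hat\theta_n^\lambda-\theta_0\rangle$. The piece involving the correction gives $\tfrac1n\sum_i\langle X_n^i,(I+\nabla g^\lambda)X_n^i\rangle=\trace((I+\nabla g^\lambda)\hat\Sigma_n)$.

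The quadratic term is $\tfrac{1}{2n}\sum_i (\hat\theta^{\lambda,-i}_n-\theta_0)' n\nabla^2 l_n (\cdot)$. Because the Hessian average converges to $I$ by Assumption~\ref{as:smoothloss} and Assumption~\ref{as:loss_fn_bounds}.3, it reduces to $\tfrac12\|\hat\theta_n^\lambda-\theta_0\|^2+o_p(1)$. Combining with Lemma~\ref{lem:ifrep}, which gives $\hat\theta_n^\lambda=\tilde\theta_n+g^\lambda(\tilde\theta_n)+o_p(1)$ uniformly, the terms collapse to
\[
c_n+\tfrac12\|g^\lambda(\tilde\theta_n)\|^2+\trace(\nabla g^\lambda(\tilde\theta_n)\hat\Sigma_n)+\tfrac12\trace(\Sigma).
\]
I then replace $\tilde\theta_n$ by $\hat\theta_n$, using $1$-Lipschitzness of $g^\lambda$ and boundedness in probability. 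I also replace $\hat\Sigma_n$ by $\Sigma$, using the law of large numbers under the fourth-moment bound together with $\|\nabla g^\lambda\|\le1$. The result is $c_n+SURE(\lambda,\hat\theta_n,\Sigma)$.

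\textbf{Main obstacle: Lasso kinks.} Here $g^\lambda$ is piecewise linear, and Step 1 fails for observations $i$ such that $\tilde\theta_n$ and $\tilde\theta_n^{-i}$ lie in different linear pieces. Replacing $\nabla g^\lambda(\tilde\theta_n)$ by $\nabla g^\lambda(\hat\theta_n)$ also fails near kinks. Since $\Lambda$ is finite, uniformity is trivial, and it suffices to handle each fixed $\lambda$.

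For a fixed $\lambda$, the perturbation $n^{-1/2}X_n^i$ is small. The probability that $\tilde\theta_n$ is within distance $\delta$ of the kink set (a finite union of hyperplanes) tends to $O(\delta)$, because $\tilde\theta_n$ is asymptotically normal by Corollary~\ref{cor:asymptotic_penalizederm}. On the complement, the fraction of $i$ with $\|X_n^i\|>\delta\sqrt n$ vanishes, and those observations contribute $o_p(1)$ via the bound $\|g^\lambda(a)-g^\lambda(b)\|\le\|a-b\|$ and Cauchy--Schwarz with fourth moments. Letting $\delta\to0$ gives convergence in probability. The same argument, with the $\nabla g^\lambda=0$ convention at kinks being a null event, handles the replacement of $\tilde\theta_n$ by $\hat\theta_n$ in the trace term.
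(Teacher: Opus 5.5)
The gap is in Step~1 for Lasso. Your Ridge argument and your treatment of kink-crossings of the surrogate map are essentially sound. In particular, bounding the probability that $\tilde\theta_n$ lies within $\delta$ of the kink set, and discarding the few $i$ with $\|X_n^i\|>\delta\sqrt n$, is a clean substitute for the paper's truncation and dominated-convergence control of $R^\lambda$ in Step~3.

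Your expansion $\hat\theta_n^{\lambda,-i}=\hat\theta_n^\lambda-\tfrac{1}{\sqrt n}(I+\nabla g^\lambda(\tilde\theta_n))X_n^i+r_n^i$ is a claim about the actual leave-one-out fits, i.e.\ minimizers of $L_n^{-i}(\theta)+\lambda\pi(\theta)$ with non-quadratic $L_n$. Your ``main obstacle'' paragraph only checks that $\tilde\theta_n$ and $\tilde\theta_n^{-i}$ lie in the same linear piece of $g^\lambda$, which is a statement about the quadratic surrogate. Nothing you write shows that $\hat\theta_n^\lambda$ and every $\hat\theta_n^{\lambda,-i}$ carry the surrogate's sign vector $\eta$, and in particular that their inactive coordinates are exactly zero. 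Lemma~\ref{lem:ifrep} gives only $\hat\theta_n^\lambda-\tilde\theta_n^\lambda=o_{\mu_n}(1)$, which cannot produce exact zeros. A coordinate entering or leaving the active set changes the leave-one-out displacement by an amount of order $\|X_n^i\|/\sqrt n$, the same order as the leading term. So without this, $\sum_i\|r_n^i\|^2$ is not controlled.

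Supplying this is the content of the paper's Step~4 (Lasso). Take the event that $g^\lambda$ is affine on $B_{\rho_n}(\tilde\theta_n)$, with $\rho_n\to0$ and $\rho_n n^{1/4}\to\infty$. The paper shows that on this event, with probability approaching one, $\mathrm{sign}(\hat\theta_n^\lambda)=\mathrm{sign}(\hat\theta_n^{\lambda,-i})=\mathrm{sign}(\tilde\theta_n^{\lambda,-i})=\eta$ simultaneously for all $i$. The argument has three ingredients:
\begin{itemize}
\item a strict-slack KKT comparison for inactive coordinates: moving $\theta_j$ from $0$ to $t\neq0$ strictly increases the penalized objective, because $|\tilde\theta_{n,j}|+\rho_n<\lambda$ while $\sup_{\|\theta\|\le C}\|\nabla\epsilon_n(\theta)\|=O_{\mu_n}(n^{-1/2})$;
\item a margin argument for active coordinates;
\item union bounds over $i$ for $n^{-1/2}\|X_n^i\|$, the score-Lipschitz term and the Hessian remainder, based on the moment conditions in Assumption~\ref{as:loss_fn_bounds}.
\end{itemize}
Only on that event do the first-order conditions on the common active set give your Step~1 expansion.

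There are two smaller points. First, expanding each held-out loss to second order around $\theta_0$ leaves a remainder summed over all $n$ observations. Assumption~\ref{as:loss_fn_bounds}.3 bounds this only by $\tfrac{1}{6n}\sum_i C_n(Z_n^i)\|\hat\theta_n^{\lambda,-i}-\theta_0\|^3=O_{\mu_n}(1)$. Assumption~\ref{as:loss_fn_bounds}.4 controls a fourth-order remainder, but not the $\lambda$-dependent cubic term. The paper avoids this by expanding around $\hat\theta_n^\lambda$ to first order, so the remainder carries $\|\hat\theta_n^{\lambda,-i}-\hat\theta_n^\lambda\|^2$, and by handling the in-sample term $L_n(\hat\theta_n^\lambda)$ with Lemma~\ref{lem:ifrep}. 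You can repair your Step~2 the same way.

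Second, your $\max_i$ bounds are stronger than the assumptions deliver. Uniformly bounded fourth moments give only $\max_i\|X_n^i\|=O_{\mu_n}(n^{1/4})$, and the Hessian contribution to $\max_i\|r_n^i\|$ is only guaranteed to be $O_{\mu_n}(n^{-1/2})$. However, you only need $\sum_i\|r_n^i\|^2=o_{\mu_n}(1)$.
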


\begin{lemma}[Joint convergence of tuning parameter and tuned estimators]
  \label{lem:convergencetuned}
  $$
    (\lambda_n^*, \hat \theta_n) \rightarrow^d (\lambda^*, \hat \theta).
  $$
  and
  $$
  \hat \theta_n^{*} \rightarrow^d \hat \theta^{*}.
  $$
\end{lemma}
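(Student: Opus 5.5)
The plan is an argmin continuous mapping argument. Lemma~\ref{lem:convergence_cv} gives the decomposition $CV_n(\lambda) = c_n + SURE(\lambda,\hat\theta_n,\Sigma) + r_n(\lambda)$ with $\sup_{\lambda\in\Lambda}|r_n(\lambda)| \to^{\mu_n} 0$. Since $c_n$ does not depend on $\lambda$, we have
\[
\lambda_n^* = \argmin_{\lambda\in\Lambda}\left[SURE(\lambda,\hat\theta_n,\Sigma) + r_n(\lambda)\right].
\]
Define the map $\Phi(\vartheta) = \argmin_{\lambda\in\Lambda} SURE(\lambda,\vartheta,\Sigma)$ on $\mathbb R^k$. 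Then $\lambda^* = \Phi(\hat\theta)$. By Corollary~\ref{cor:asymptotic_penalizederm}, $\hat\theta_n \to^d \hat\theta \sim N(\theta_0,\Sigma)$, and since $\Sigma$ is positive definite, $\hat\theta$ has a Lebesgue density.

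It therefore suffices to show that the set $D$ of $\vartheta$ at which $SURE(\cdot,\vartheta,\Sigma)$ fails to have a unique, well-separated global minimizer over $\Lambda$ is Lebesgue-null. Well-separated means that for every $\epsilon>0$,
\[
\inf_{\lambda\in\Lambda,\ d(\lambda,\Phi(\vartheta))\ge\epsilon} SURE(\lambda,\vartheta,\Sigma) > SURE(\Phi(\vartheta),\vartheta,\Sigma),
\]
with $d$ a metric on the compactified $\Lambda = \mathbb R^+\cup\{\infty\}$ for Ridge. In addition, $SURE$ must be jointly continuous in $(\lambda,\vartheta)$ near such points. Given this, a Skorokhod representation (or an extended continuous mapping argument, van der Vaart Thm.~18.11 style) applies to the pair $(\hat\theta_n, r_n)\to(\hat\theta,0)$. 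Take $\vartheta_n\to\vartheta\notin D$ and $\sup|r_n|\to0$. Then $\sup_\lambda|SURE(\lambda,\vartheta_n,\Sigma)+r_n(\lambda)-SURE(\lambda,\vartheta,\Sigma)|\to0$, by uniform continuity of $SURE$ in $\vartheta$ uniformly in $\lambda$, which follows from the Lipschitz property of $g^\lambda$ in Lemma~\ref{lem:lipschitz}. Well-separation then forces any minimizer to converge to $\Phi(\vartheta)$. This yields $(\lambda_n^*,\hat\theta_n)\to^d(\lambda^*,\hat\theta)$.

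The core work, and the main obstacle, is the null-set claim, handled case by case.

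\textbf{Lasso.} $\Lambda$ is finite, so well-separation reduces to uniqueness. The set $D$ is contained in $\bigcup_{\lambda\neq\lambda'}\{\vartheta: SURE(\lambda,\vartheta)=SURE(\lambda',\vartheta)\}$. For fixed $\lambda$, $SURE$ is piecewise quadratic in $\vartheta$ on finitely many polyhedral regions, determined by which coordinates of $A^{-1}$-transformed soft thresholding are active; the trace term is piecewise constant. I would show that on each pair of regions the difference is a nonzero polynomial, so its zero set is null. Nonzero-ness holds because $\|g^\lambda(\vartheta)\|^2$ grows like $\lambda^2\cdot\#\text{active}$ as $\|\vartheta\|\to\infty$ along suitable directions, which distinguishes $\lambda \neq \lambda'$.

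\textbf{Ridge.} $SURE$ is real-analytic in $\lambda\in(0,\infty)$, with a continuous extension to $\lambda=\infty$ (full shrinkage) and $\lambda=0$. I would argue that the minimizer is unique for a.e.\ $\vartheta$. The set of $\vartheta$ with two distinct global minimizers $\lambda_1<\lambda_2$ is cut out by the equations $SURE(\lambda_1)=SURE(\lambda_2)$, $\partial_\lambda SURE(\lambda_1)=0$, and $\partial_\lambda SURE(\lambda_2)=0$. These are three equations in $k+2$ unknowns, quadratic in $\vartheta$, so generically they define a set of dimension $k-1$. I would make this rigorous by a Sard or transversality argument, or by projecting along a scaling direction $\vartheta\mapsto t\vartheta$ and showing that, for fixed direction, only finitely many $t$ qualify. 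Uniqueness on compact $\Lambda$ plus continuity then gives well-separation.

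Finally, for $\hat\theta_n^*\to^d\hat\theta^*$ I would use Lemma~\ref{lem:ifrep}: $\sup_\lambda\|\hat\theta_n^\lambda-\tilde\theta_n-g^\lambda(\tilde\theta_n)\|=o_{\mu_n}(1)$ and $\hat\theta_n-\tilde\theta_n=o(1)$, so
\[
\hat\theta_n^*=\hat\theta_n+g^{\lambda_n^*}(\hat\theta_n)+o_{\mu_n}(1).
\]
The map $(\lambda,\vartheta)\mapsto\vartheta+g^\lambda(\vartheta)$ is continuous: it is Lipschitz in $\vartheta$ by Lemma~\ref{lem:lipschitz}, and continuous in $\lambda$ including at $\infty$, where $g^\infty(\vartheta)=-\vartheta$. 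The continuous mapping theorem applied to the joint convergence above then concludes.
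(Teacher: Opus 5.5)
Your architecture (write $\lambda_n^*$ as a minimizer of $SURE(\cdot,\hat\theta_n,\Sigma)+r_n$, use $(\hat\theta_n,r_n)\to^d(\hat\theta,0)$ with an almost-everywhere continuous-mapping argument, then pass to $\hat\theta_n^*$ via Lemma~\ref{lem:ifrep}) is the paper's. The genuine gap is your Lasso claim that the tie set $D$ is Lebesgue-null. It is false as soon as $\Lambda$ contains two positive values $\lambda<\lambda'$.

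Take $\vartheta$ in the interior of $\{\vartheta:\|A'\vartheta\|_\infty\le\lambda\}$. Then $h^\lambda(\vartheta)=h^{\lambda'}(\vartheta)=0$, so $g^\lambda(\vartheta)=g^{\lambda'}(\vartheta)=-\vartheta$ and $\nabla g^\lambda(\vartheta)=\nabla g^{\lambda'}(\vartheta)=-I$. Hence $SURE(\lambda,\vartheta,\Sigma)=SURE(\lambda',\vartheta,\Sigma)=\tfrac12(\|\vartheta\|^2-\trace(\Sigma))$ identically on an open set. Near the origin every positive grid value shrinks $\vartheta$ to zero. This common value also lies below $SURE(0,\vartheta,\Sigma)=\tfrac12\trace(\Sigma)$, which matters if $0\in\Lambda$. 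So $SURE$ has several tied global minimizers on an event of positive $N(\theta_0,\Sigma)$-probability, and $\Phi$ is not a.e.\ single-valued, let alone well separated. Your nonzero-polynomial step fails exactly on the pair of all-zero regions, where the two quadratic pieces coincide. The growth of $\|g^\lambda\|^2$ at infinity only separates pieces with nonempty active set.

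The paper handles this case separately in the proof of Lemma~\ref{lem:ascontinuous}, and gives up continuity of $\tilde\lambda$ there. Instead it shows that for a.e.\ $\vartheta$ at which the minimizing grid values shrink $\vartheta$ to zero, every grid value that does not shrink it has strictly larger $SURE$. So for $(\vartheta',\Delta)$ near $(\vartheta,0)$, every minimizer of the perturbed criterion zeroes out $\vartheta'$, and all of them give the same $g^\lambda(\vartheta')=-\vartheta'$. Hence $(\vartheta,\Delta)\mapsto\vartheta+g^{\tilde\lambda(\vartheta,\Delta)}(\vartheta)$ is continuous there even though $\tilde\lambda$ is not, and this rescues $\hat\theta_n^*\to^d\hat\theta^*$.

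It does not rescue your first conclusion. On this event, $\sup_\lambda|r_n(\lambda)|\to0$ says nothing about how $CV_n$ breaks ties among the zeroing grid values. For $\lambda_n^*\to^d\lambda^*$ you would additionally need two facts. First, with probability approaching one, all leave-one-out Lasso fits also vanish at those grid values, so that $CV_n$ ties exactly. Second, $\lambda_n^*$ and $\lambda^*$ use the same tie-breaking rule. The paper's proof, too, only delivers the estimator statement on this event.

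There are two smaller points. For Lasso, uniform-in-$\lambda$ continuity of $SURE$ in $\vartheta$ does not follow from Lemma~\ref{lem:lipschitz}, because $\trace(\nabla g^\lambda(\vartheta)\Sigma)$ jumps across the polyhedral boundaries. It holds only off a null set, by local constancy of $\nabla g^\lambda$ (Lemma~\ref{lem:locallinear}), which is enough for an a.e.\ argument. For Ridge, your equation count is not a proof: minimizers at $\lambda\in\{0,\infty\}$ satisfy no first-order condition, and ``generically'' needs an actual transversality theorem. Your fallback along rays does work, and it is what the paper does. Strict supermodularity of $SURE(\lambda,R,\nu)$ in $(\lambda,R)$ makes the minimizer monotone in $R$, so ties occur only at its countably many jumps, and the minimum is well separated at continuity points (Lemma~\ref{lem:sureridge}).
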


From Lemma \autoref{lem:convergencetuned}, we then show \autoref{theo:risk_convergence}.
The appendices prove each of these Lemmas in turn.

\paragraph{Discussion of assumptions}
Let us briefly discuss the role of our different assumptions, as referenced in \autoref{tab:assumptions}, in ensuring the validity of these results.
Each assumption controls a distinct source of error in our approximations. Assumption~\ref{as:sequence} places the problem in a local-to-zero regime, where the truth drifts at rate $1/\sqrt n$ while the score variance $\Sigma$ stays fixed and non-degenerate; this is what keeps the bias--variance tradeoff of penalization non-trivial in the limit, so that $CV$ and $SURE$ estimate a meaningful, non-degenerate risk. Assumption~\ref{as:smoothloss} normalizes the limiting Hessian to the identity; it carries no statistical content and merely puts the limit experiment into the normal-means form on which $SURE$ is built. Assumption~\ref{as:ridgelasso} restricts attention to Ridge and Lasso penalties, the two cases for which we can show that the $SURE$ objective has a unique, well-separated minimizer in $\lambda$ almost everywhere---the property that lets convergence of the \emph{criteria} carry over to convergence of the \emph{tuned} estimators (Lemma~\ref{lem:convergencetuned}). The two conditions in Assumption~\ref{as:ERM_conditions} govern the unregularized ERM estimator: the Lipschitz condition~(4.1) lets us invoke empirical-process arguments for its convergence, while the stochastic-boundedness condition~(4.2) rules out runaway estimates that would invalidate those arguments. Finally, the four conditions in Assumption~\ref{as:loss_fn_bounds} provide the quantitative control needed to equate $CV_n$ with $SURE$: strong convexity and smooth Hessians~(5.1) make each leave-one-out refit a small, $O(n^{-1/2})$ perturbation, so the degrees-of-freedom correction is well-behaved, and the score, Hessian, and fourth-derivative moment bounds~(5.2--5.4) control the successive terms in the Taylor expansion of the leave-one-out loss, ensuring the remainder vanishes uniformly in $\lambda$.

\section{Numerical experiments}
\label{sec:numerical}

We now illustrate our results numerically. The experiments have two purposes: to
verify that the approximations underlying our intermediate lemmas are accurate at
realistic sample sizes, and to show that the risk of a cross-validated estimator
indeed converges to the risk of its SURE-tuned limit. Replication code is
available at \url{https://github.com/maxkasy/cv_and_sure_simulations}.

\paragraph{Designs}
We compare each finite-sample experiment to its limiting normal-means
counterpart. In the finite-sample experiment we draw $n$ i.i.d.\ observations,
form the empirical risk minimizer $\hat\theta_n$, the $n$-fold cross-validation
criterion $CV_n(\lambda)$, the CV-tuned estimator $\hat\theta_n^*$, and its
out-of-sample loss $\bar L_n(\hat\theta_n^*,\theta_0)$. In the limiting
experiment we draw $\hat\theta\sim N(\theta_0,\Sigma)$ and compute the
SURE-tuned estimator $\hat\theta^*$ and its loss $\tfrac12\|\hat\theta^*-\theta_0\|^2$.

All designs are constructed directly in the normalized coordinates of
Assumption~\ref{as:smoothloss}, so that the limiting Hessian equals the identity,
 $H=I$.
We consider the same examples discussed in \autoref{sec:verify_examples} - linear regression and logisitic regression, for both Ridge and Lasso penalties: 

For
\emph{linear regression}, $l(\beta,z)=(y-w\cdot\beta)^2$ with
$W\sim N(0,\tfrac12 I_p)$ and $Y=W\cdot\theta_0/\sqrt n+U$,
$U\sim N(0,\tfrac12)$; this yields $H=2\,E[WW^\intercal]=I$ and $\Sigma=I$, and
the expected out-of-sample loss is exactly $\bar L_n(\theta,\theta_0)=\tfrac12\|\theta-\theta_0\|^2$.
For \emph{logistic regression}, $l(\beta,z)=-\log f(y\mid w,\beta)$ with
$W\sim N(0,4I_p)$ and $Y\sim\mathrm{Bernoulli}(\Lambda(W\cdot\theta_0/\sqrt n))$,
$\Lambda$ the logistic c.d.f.; here $b''(0)=\tfrac14$, so again $H=I$ and
$\Sigma=I$. The Ridge penalty is $\pi(\theta)=\tfrac12\theta\cdot A^{-1}\theta$
and the Lasso penalty $\pi(\theta)=\|A^{-1}\theta\|_1$, with $A$ diagonal.
Table~\ref{tab:sim_designs} lists the five resulting designs that we consider.

All five designs use $p=10$. Since $\Sigma=I$, each coordinate of $\hat\theta$ has
unit noise scale, and we choose the signal $\theta_0$ to be of comparable,
order-one magnitude. This is the regime in which the bias-variance tradeoff is
non-degenerate: Some shrinkage strictly improves on both the unregularized
estimator ($\lambda=0$) and full shrinkage ($\lambda=\infty$), so that the tuning
problem is not trivially solved at an endpoint. For the dense
designs (D1, D3) all coordinates carry the same unit signal; for the Lasso designs
(D2, D4) the signal is sparse, with three strong nonzero entries against a
background of exact zeros, which is the regime the Lasso is designed for.
Design~D5 is the anisotropic stress test: its penalty matrix $A$ has a wide,
geometric eigenvalue spread ($A_{jj}=40^{(j-1)/9}$, ranging from $1$ to $40$),
the higher-dimensional analogue of the anisotropic Ridge configuration that
produced the multi-modal $SURE$ examples of Figure~\ref{fig:multimodality} in
$p=2$. Genuine multi-modality of $SURE$ is a low-dimensional, knife-edge
phenomenon that does not survive in $p=10$; what the anisotropy produces here is a
$SURE$ objective that is poorly separated (nearly flat) near its minimum for a
sizeable fraction of draws. This probes the well-separation argument behind
Lemma~\ref{lem:convergencetuned} precisely where it is most demanding.

\begin{table}[t]
\centering
\footnotesize
\caption{Simulation designs. }
\label{tab:sim_designs}
\vspace{4pt}

\begin{tabular}{llllll}
\toprule
& Loss & Penalty & $p$ & $\theta_0$ & $A$ \\
\midrule
D1 & Linear & Ridge & 10 & $(1,\dots,1)$ & $I$ \\
D2 & Linear & Lasso & 10 & $(3,-3,2,0,\dots,0)$ & $I$ \\
D3 & Logit & Ridge & 10 & $(1,\dots,1)$ & $I$ \\
D4 & Logit & Lasso & 10 & $(3,-3,2,0,\dots,0)$ & $I$ \\
D5 & Linear & Ridge & 10 & $1.5\cdot(1,\dots,1)$ & $\mathrm{diag}(40^{(j-1)/9})$ \\
\bottomrule
\end{tabular}
\flushleft
$\Sigma=I$ and $p=10$ throughout (normalized coordinates). The sparse signal is $\theta_0=(3,-3,2,0,\dots,0)$; for D5, $A=\mathrm{diag}(40^{0},40^{1/9},\dots,40^{1})$ is a geometric grid spanning $[1,40]$.
\end{table}

For each design we vary the sample size $n$ and average over Monte Carlo
replications. The $n$-fold cross-validation criterion is computed exactly: in
closed form via leave-one-out residuals for the linear--Ridge designs (D1, D5),
and by explicit leave-one-out refitting otherwise. We use a grid of $14$ values
of $\lambda$. For Ridge, the grid is equally spaced in the compactified coordinate
$s=\lambda/(1+\lambda)\in[0,1]$, ranging from $\lambda=0$ (no shrinkage, $s=0$) to
$\lambda=\infty$ (full shrinkage, $s=1$), consistent with the compactified
$\Lambda=\mathbb R^+\cup\{\infty\}$ of Assumption~\ref{as:ridgelasso}. For Lasso,
the grid is equally spaced in $[0,\lambda_{\max}]$ with $\lambda_{\max}=3$.

\paragraph{Diagnostics}

We summarize each approximation by a single scale-free statistic, reported in the
columns of Table~\ref{tab:sim_results}, and write each as a discrepancy
$\Delta$ with a descriptive subscript. Let $\tilde\theta_n$ denote the
influence-function approximation of Lemma~\ref{lem:ifrep}.
\begin{enumerate}
\item \textbf{Influence-function approximation} (Lemma~\ref{lem:ifrep}):
$\Delta_{\mathrm{IF}} = E\|\hat\theta_n-\tilde\theta_n\|^2\big/E\|\hat\theta_n-\theta_0\|^2$,
the mean squared error of the linearization relative to the scale of the
estimand.

\item \textbf{Convergence of $CV$ to $SURE$} (Lemma~\ref{lem:convergence_cv}):
\[
\Delta_{\mathrm{CV}} = \frac{E\,\sup_{\lambda}\big|\,[CV_n(\lambda)-CV_n(\lambda_{\mathrm{ref}})]
-[SURE(\lambda)-SURE(\lambda_{\mathrm{ref}})]\,\big|}
{E\,\big[\max_\lambda SURE-\min_\lambda SURE\big]},
\]
the uniform gap between the two criteria, recentered at a reference
$\lambda_{\mathrm{ref}}$ to remove the $\lambda$-independent constant $c_n$, and
normalized by the range of $SURE$ over the grid. Here $SURE(\lambda)$ is
evaluated at the realized $\hat\theta_n$ and the true $\Sigma$.

\item \textbf{Agreement of the tuning parameter}
(Lemma~\ref{lem:convergencetuned}):
$\Delta_{\mathrm{tune}} = E\big|\,\ell(\lambda_n^*)-\ell(\lambda^{S})\,\big|\big/E\,\ell(\lambda^{S})$,
where $\ell(\lambda)=\tfrac12\|\hat\theta_n+g^\lambda(\hat\theta_n)-\theta_0\|^2$
and $\lambda_n^*$, $\lambda^{S}$ minimize $CV_n$ and $SURE(\cdot,\hat\theta_n,\Sigma)$
at the same $\hat\theta_n$. This is the relative excess loss incurred by tuning
with $CV$ rather than $SURE$ --- directly addressing whether the two minimizers
are close enough to matter.

\item \textbf{Convergence of risk} (Theorem~\ref{theo:risk_convergence},
Corollary~\ref{cor:risk_convergence}):
$\Delta_{R}=|R_n-R|$, the gap between the finite-sample risk
$R_n=E[\min(\bar L_n(\hat\theta_n^*,\theta_0),M)]$ and its limit
$R=E[\min(\tfrac12\|\hat\theta^*-\theta_0\|^2,M)]$, with truncation $M=20$.
\end{enumerate}

\paragraph{Results}

\begin{table}[t]
\centering
\footnotesize
\caption{Approximation diagnostics at $n=400$. }
\label{tab:sim_results}
\vspace{4pt}

\begin{tabular}{l c c c c c c}
\toprule
Design & $\Delta_{\mathrm{IF}}$ & $\Delta_{\mathrm{CV}}$ & $\Delta_{\mathrm{tune}}$ & $R_n$ & $R$ & $\Delta_{R}$ \\
& (Lem.~2) & (Lem.~4) & (Lem.~5) & (Thm.~1) & (limit) & $|R_n-R|$ \\
\midrule
D1\quad Linear / Ridge & 0.028 & 0.208 & 0.052 & 3.110 & 3.084 & 0.026 \\
D2\quad Linear / Lasso (sparse) & 0.028 & 0.226 & 0.111 & 3.885 & 3.789 & 0.096 \\
D3\quad Logit / Ridge & 0.032 & 0.309 & 0.055 & 3.062 & 3.075 & 0.013 \\
D4\quad Logit / Lasso (sparse) & 0.038 & 0.297 & 0.127 & 3.731 & 3.776 & 0.046 \\
D5\quad Linear / Ridge (anisotropic) & 0.028 & 0.152 & 0.036 & 4.827 & 4.733 & 0.094 \\
\bottomrule
\end{tabular}
\flushleft
Columns: $\Delta_{\mathrm{IF}}$ (Lemma~\ref{lem:ifrep}), $\Delta_{\mathrm{CV}}$ (Lemma~\ref{lem:convergence_cv}), $\Delta_{\mathrm{tune}}$ (Lemma~\ref{lem:convergencetuned}), the finite-sample and limiting risks $R_n,R$, and their gap $\Delta_{R}$ (Theorem~\ref{theo:risk_convergence}). Averages over $8000$ replications for the closed-form designs D1, D5 and $2000$ for the leave-one-out designs D2--D4.
\end{table}

Table~\ref{tab:sim_results} reports the four diagnostics at $n=400$, and
Figures~\ref{fig:sim_convergence} and~\ref{fig:sim_riskfunction} display their
behavior as $n$ grows.
\begin{figure}[t]
\centering
\includegraphics[width=\textwidth]{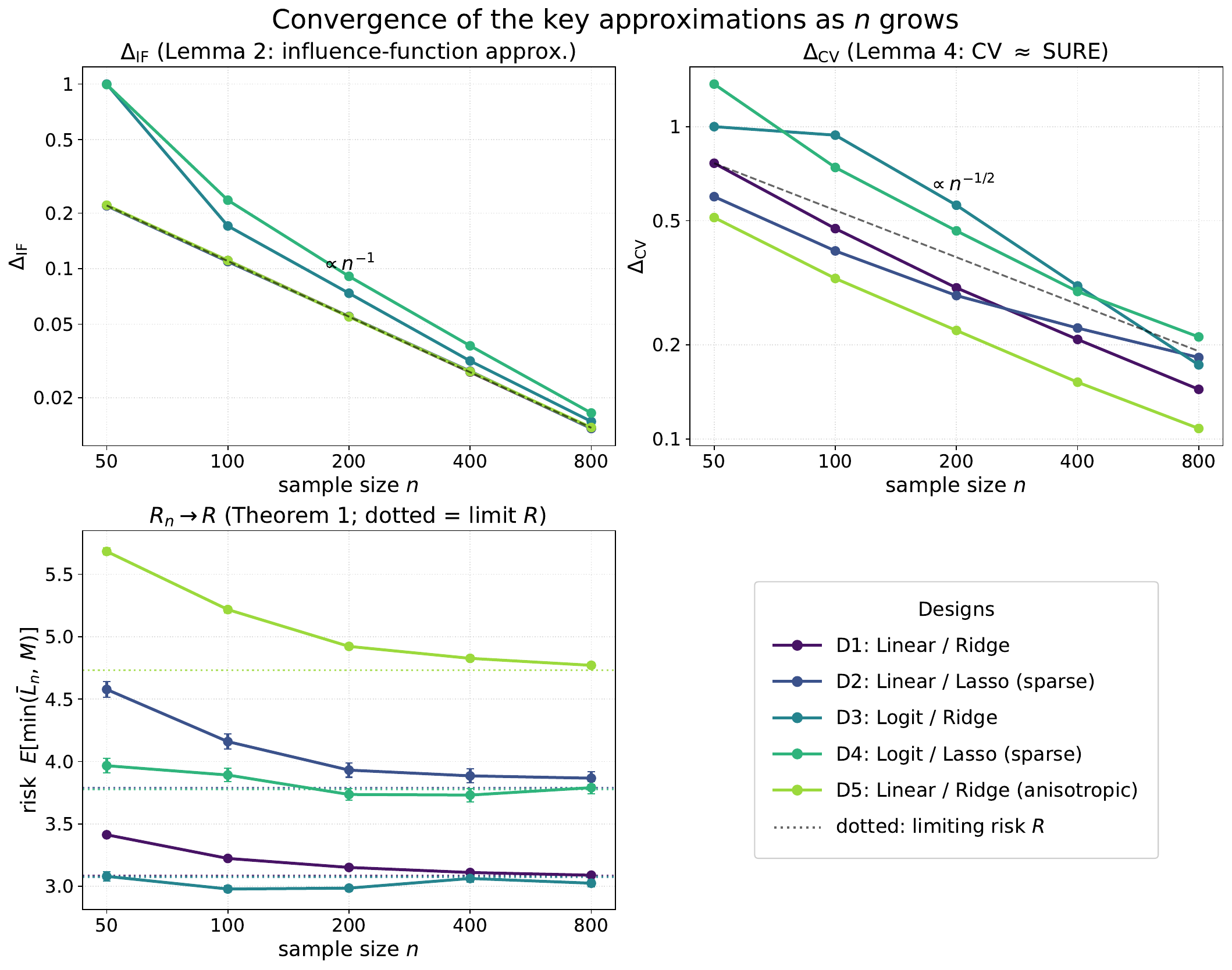}
\caption{\footnotesize Convergence of the approximations as $n$ grows (designs identified in the
legend panel, lower right). Top left: the influence-function error
$\Delta_{\mathrm{IF}}$ (Lemma~\ref{lem:ifrep}), $\propto n^{-1}$. Top right: the
cross-validation gap $\Delta_{\mathrm{CV}}$ (Lemma~\ref{lem:convergence_cv}),
$\propto n^{-1/2}$. Bottom left: the finite-sample risk $R_n$ converging to the
limit $R$ (dotted lines; Theorem~\ref{theo:risk_convergence}). }
\label{fig:sim_convergence}
\end{figure}
All four diagnostics are small across every design, and Figure~\ref{fig:sim_convergence}
shows that they vanish as $n$ grows. The influence-function error
$\Delta_{\mathrm{IF}}$ decreases at the parametric rate $n^{-1}$ --- for example
from $0.22$ at $n=50$ to $0.014$ at $n=800$ for design~D1, halving with each
doubling of $n$ --- confirming the linearization $\hat\theta_n\approx\tilde\theta_n$ of Lemma~\ref{lem:ifrep}. 
The
cross-validation gap $\Delta_{\mathrm{CV}}$ decreases at the slower rate
$n^{-1/2}$, as expected for a quantity governed by the sampling variability of the
leave-one-out perturbations; the convergence holds uniformly across the Ridge and
Lasso penalties and across the linear and logistic losses, confirming
Lemma~\ref{lem:convergence_cv}.

The tuning diagnostic $\Delta_{\mathrm{tune}}$ shows that selecting $\lambda$ by
cross-validation rather than by $SURE$ costs only a few percent of the tuned loss,
and shrinks toward zero as $n$ grows. It is largest for the two Lasso designs (D2,
D4: around $0.11$--$0.13$ at $n=400$), whose non-quadratic, kinked objective is the
least smooth, and smallest for the anisotropic design~D5 ($0.036$). The latter is
instructive: the wide eigenvalue spread of D5 makes $SURE$ nearly flat near its
minimum, so although the minimizer is poorly separated, the loss is correspondingly
insensitive to the exact choice of $\lambda$. Poor separation and large excess loss thus need not coincide. Consequently the finite-sample risk $R_n$ is close to its
limit $R$ already at $n=400$ (the gap $\Delta_{R}$ is at most a fraction of a unit,
against risk levels of $3$ to $5$), and converges to it as $n$ grows; by $n=800$
the two are nearly indistinguishable.

\begin{figure}[t]
\centering
\includegraphics[width=0.62\textwidth]{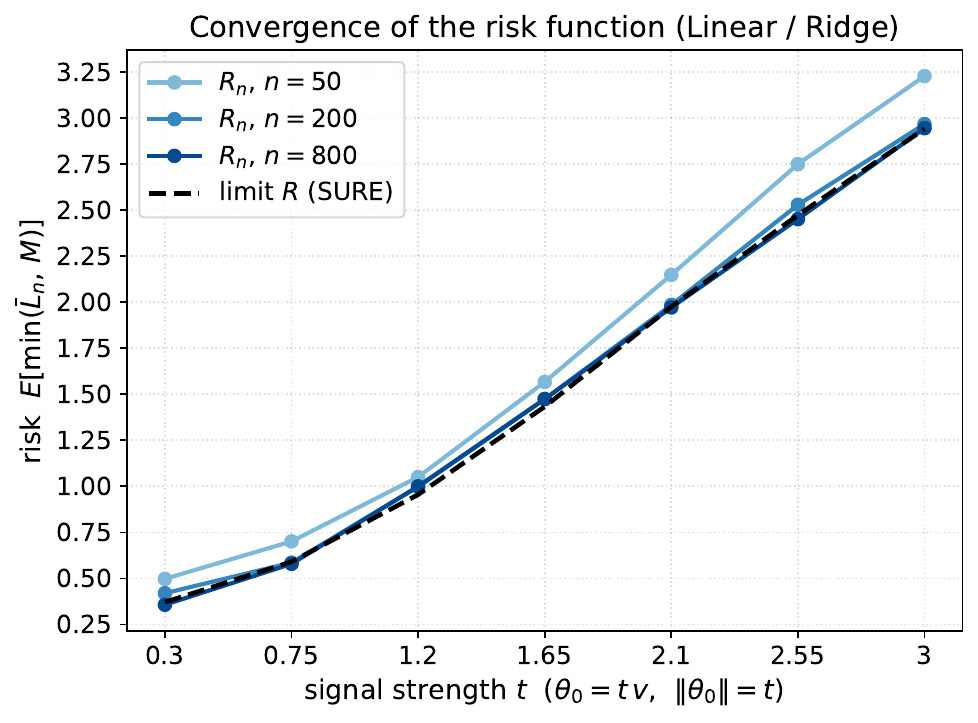}
\caption{\footnotesize Convergence of the risk function (linear--Ridge design). Along the slice
$\theta_0=t\cdot v$ (with $v$ a fixed unit vector, so $\|\theta_0\|=t$), the
finite-sample risk $R_n(\theta_0)$ of the CV-tuned estimator approaches the
SURE-tuned limit $R(\theta_0)$; by $n=200$ the curves are nearly
indistinguishable from the limit.}
\label{fig:sim_riskfunction}
\end{figure}

Figure~\ref{fig:sim_riskfunction} makes the convergence concrete for the
linear--Ridge design along a one-dimensional slice $\theta_0=t\cdot v$: the
finite-sample risk function $R_n(\theta_0)$ approaches the limiting risk function
$R(\theta_0)$ predicted by Theorem~\ref{theo:risk_convergence}, and the two are
nearly indistinguishable by $n=200$.
This is the practical content of our result:
the equivalence between cross-validation and SURE is useful at realistic sample
sizes, not only asymptotically.

Note also that the limiting risk $R(\theta_0)$ rises from about $0.4$ near the
origin toward---but always below---the risk of the \emph{unregularized}
estimator: at $\lambda=0$ the limiting estimator is $\hat\theta\sim
N(\theta_0,\Sigma)$, whose risk is $\theta_0$-independent and equal to
$E[\tfrac12\|\hat\theta-\theta_0\|^2]=\tfrac12\trace(\Sigma)=p/2=5$. The CV- and
SURE-tuned estimators thus strictly improve on the unregularized estimator across
the entire slice, and indeed in every design of Table~\ref{tab:sim_results}, where
the tuned limiting risks range from $3.1$ to $4.7$---all below $5$. This confirms the well-known dominance of James--Stein shrinkage over
the maximum-likelihood estimator (Figure~\ref{fig:JS-risk}).

\section{Conclusion}
\label{sec:conclusion}

We have shown that the out-of-sample prediction loss of regularized empirical risk minimization estimators tuned by $n$-fold cross-validation converges in distribution to the squared-error loss of the corresponding shrinkage estimator in the normal-means model, tuned by Stein's unbiased risk estimate. We conclude by discussing what this equivalence tells us.

For \emph{practitioners}, the result justifies using SURE as an asymptotically equivalent and computationally cheaper surrogate for $n$-fold cross-validation: SURE is available in closed form for both the Ridge and the Lasso penalty, and it avoids refitting the model $n$ times. Beyond this computational gain, the limiting risk function provides a uniform, robust performance guarantee for estimators tuned using CV. 

For \emph{theorists}, our result reduces the analysis of CV-tuned regularized ERM to a shrinkage problem in the normal-means model. This yields a characterization of the asymptotic \emph{risk function}, which discribes how risk varies with the underlying parameter $\theta_0$. This provides a rich characterization: Instead of a single uniform upper bound on regret, as in standard learning theory, the risk function allows to distinguish the parameter regions where regularization helps more or less. It also implies that the well-developed apparatus for the normal-means model (Stein's identity, James-Stein shrinkage, etc.) can be used to analyze the    asymptotic behavior of other data-driven tuning procedures.

Our analysis has some limitations that suggest directions for future work. Our framework is in particular fixed-dimensional and local-to-zero: the number of parameters is held fixed as $n$ grows, and the optimal parameter drifts to zero at the $\sqrt n$ rate. The high-dimensional case, in which the dimension grows with the sample size, is likely to behave differently. This high-dimensional case might provide a more accurate description of machine learning practice. In many such regimes one would expect the data-driven tuning parameter to be \emph{consistent} for its optimal value (as, for example, in \citealt{abadiekasy2017}); the variability of the tuning parameter could then be ignored, and the limiting risk function would simplify accordingly.

\clearpage
\bibliographystyle{apalike}
\bibliography{library}

\appendix
\clearpage
\section{Verifying the regularity conditions for leading examples}
\label{sec:verify_examples}

We now verify that our assumptions hold for two leading examples: linear regression with quadratic error loss, and smooth generalized linear models. Throughout, $Z_n^i=(W_n^i,Y_n^i)$ with regressors $W_n^i\in\mathbb R^p$ i.i.d across $n$ and $i$, and we write $Q=E[W_n^i W_n^{i\intercal}]$.

Assumptions~\ref{as:sequence}--\ref{as:ridgelasso} are easy to verify for the following examples: Assumption~\ref{as:sequence} holds immediately if an appropriate local-to-zero data-generating process is specified (with $\Sigma\succ0$ and constant in $n$ checked explicitly), Assumption~\ref{as:smoothloss} is the coordinate normalization of the Hessian (e.g.\ $2Q=I$ for linear regression), and Assumption~\ref{as:ridgelasso} is simply the choice of Ridge or Lasso penalty. We therefore focus on verifying the regularity conditions in Assumptions~\ref{as:ERM_conditions} and~\ref{as:loss_fn_bounds}.

\paragraph{Linear regression.}
Let $l(\beta,z)=(y-w\cdot\beta)^2$, with data generated as $Y_n^i = W_n^i\cdot\beta_n + U_n^i$, where $\beta_n=\theta_0/\sqrt n$, $E[U_n^i\mid W_n^i]=0$, and $E[(U_n^i)^2\mid W_n^i]\ge \underline\sigma^2>0$. Assume $Q\succ0$ and the uniform moment bounds $E\|W_n^i\|^8\le M$ and $E[(U_n^i)^8]\le M$. The score and Hessian are
\[
\nabla_\beta l(\beta,z) = -2w(y-w\cdot\beta),\qquad \nabla_\beta^2 l(\beta,z) = 2ww^\intercal,
\]
so the influence function is $X_n^i=-\nabla_\beta l(\beta_n,Z_n^i)=2W_n^i U_n^i$, with $\Sigma=\Var(X_n^i)=4\,E[W_n^iW_n^{i\intercal}(U_n^i)^2]\succ0$, as required by Assumption~\ref{as:sequence}. The Hessian is constant in $\beta$ with expectation $2Q$, and Assumption~\ref{as:smoothloss} is the coordinate normalization $2Q=I$, imposed without loss of generality.

\emph{Assumption~\ref{as:ERM_conditions}.} For item~1, on any ball $\|\beta\|\le\delta$ we have $\|\nabla_\beta l(\beta,z)\|\le 2\|w\|(|y|+\|w\|\delta)=:m(z)$, and $\sup_n\Var(m(Z_n^i))<\infty$ follows from the moment bounds (using $|Y_n^i|\le\|W_n^i\|\,\|\beta_n\|+|U_n^i|$ with $\|\beta_n\|\to0$); the loss is everywhere differentiable. For item~2, the ERM estimator is OLS, $\hat\theta_n=\theta_0+\big(\tfrac1n\sum_iW_n^iW_n^{i\intercal}\big)^{-1}\tfrac1{\sqrt n}\sum_iW_n^iU_n^i=\theta_0+O_{\mu_n}(1)$, since $\tfrac1n\sum_iW_n^iW_n^{i\intercal}\to Q\succ0$ and $\tfrac1{\sqrt n}\sum_iW_n^iU_n^i=O_{\mu_n}(1)$ by the central limit theorem; hence $\hat\theta_n$ is bounded in probability.

\emph{Assumption~\ref{as:loss_fn_bounds}.} Recall $l_n(\theta,z)=l(\theta/\sqrt n,z)$, so $\sqrt n\nabla_\theta l_n=\nabla_\beta l$, $n\nabla_\theta^2 l_n=\nabla_\beta^2 l$, and $n^2 D^4_\theta l_n=D^4_\beta l$, all evaluated at $\beta=\theta/\sqrt n$.
\begin{itemize}
\item \emph{Item 1.} $\nabla_\theta^2 L_n(\theta)=\tfrac1n\sum_i 2W_n^iW_n^{i\intercal}\to 2Q\succ0$, with eigenvalues bounded above and below with probability approaching one; hence $L_n$ is $\mu$-strongly convex and $\nu$-smooth for suitable $0<\mu<\nu$.
\item \emph{Item 2.} As $\nabla_\beta l$ is affine in $\beta$, $\|\sqrt n\nabla_\theta l_n(\theta,z)-\sqrt n\nabla_\theta l_n(\theta',z)\|=\|2ww^\intercal(\theta-\theta')/\sqrt n\|\le (2\|w\|^2/\sqrt n)\|\theta-\theta'\|$, so $B_n(z)=2\|w\|^2/\sqrt n$ with $E[B_n(Z_n^i)^2]=4E\|W_n^i\|^4/n<\infty$. Moreover $E\|\sqrt n\nabla_\theta l_n(\theta_0,Z_n^i)\|^4=16\,E[\|W_n^i\|^4(U_n^i)^4]\le 16\sqrt{E\|W_n^i\|^8\,E[(U_n^i)^8]}\le 16M$.
\item \emph{Item 3.} $n\nabla_\theta^2 l_n(\theta,z)=2ww^\intercal$ is constant in $\theta$, so $\tfrac1n\sum_i\|2W_n^iW_n^{i\intercal}\|^2=4\cdot\tfrac1n\sum_i\|W_n^i\|^4=O_{\mu_n}(1)$, and the Lipschitz bound holds with $C_n(z)\equiv0$.
\item \emph{Item 4.} The loss is quadratic, so $D^4_\beta l\equiv0$ and the bound holds with $M=0$.
\end{itemize}

\paragraph{Smooth generalized linear models.}
Let $l(\beta,z)=-\log f(y\mid w,\beta)$ for an exponential-family model with $\log f(y\mid w,\beta)=y\,(w\cdot\beta)-b(w\cdot\beta)+c(y)$ and log-partition function $b(\cdot)$, as in logistic or Poisson regression. Then $\nabla_\beta l=-(y-b'(w\cdot\beta))w$, $\nabla_\beta^2 l=b''(w\cdot\beta)\,ww^\intercal$, and $D^k_\beta l=(-1)^k b^{(k)}(w\cdot\beta)\,w^{\otimes k}$. Suppose the regressors are bounded, $\|W_n^i\|\le B$ almost surely, and $\sup_n E[(Y_n^i)^4]\le M$. The log partition function satisfies $b''>0$, so the loss is strictly convex; near the drifting truth $\beta_n\to0$ the curvature $b''(W\cdot\beta_n)$ is bounded below, giving $\nabla_\theta^2 L_n(\theta)\to E[b''(0)W_n^iW_n^{i\intercal}]\succ0$ and hence Assumption~\ref{as:loss_fn_bounds}(i). Because $b$ is smooth with derivatives bounded on the compact set $\{|w\cdot\beta|\le B\delta\}$ and the regressors are bounded, the score, Hessian, and fourth-derivative bounds (items~2--4) reduce to moments of $\|W_n^i\|$, which are finite by boundedness; Assumption~\ref{as:ERM_conditions} follows as above. (For logistic regression $b(t)=\log(1+e^t)$, with $0<b''\le 1/4$ and all $b^{(k)}$ bounded.)

\clearpage
\section{Proofs: Influence function approximations}
\label{sec:influencefunctions}

\subsection{Proof of Lemma \ref{lem:lipschitz} (Lipschitz $g^\lambda$)}

Fix $0 \leq \lambda < \infty$.
Recall that
$
    g^\lambda(\theta) = \argmin_g \tfrac1{2} \|g\|^2 + \lambda \cdot  \pi(\theta + g).
$
If $\pi( \cdot )$ is convex, then so is the objective function on the right.
This implies that there exists a sub-gradient $\nabla \pi$ of $\pi$ such that the first order condition
$$
g +  \lambda \cdot  \nabla \pi(\theta + g) = 0
$$
holds for $g = g^\lambda(\theta)$.
Consider two values $\theta_1, \theta_2$ of $\theta$, and the corresponding solutions $g_1, g_2$ and sub-gradients $\nabla \pi_1, \nabla \pi_2$, as well as the differences $\Delta \theta = \theta_2 - \theta_1$ and $\Delta g = g_2 - g_1$.
Taking the difference of the first order condition across the two values yields.
$$
\Delta g + \lambda  \cdot \left[\nabla \pi_2 - \nabla \pi_1\right] = 0.
$$
Convexity of $\pi$ implies
$$
\langle \nabla \pi_2 - \nabla \pi_1, \Delta \theta + \Delta g \rangle \geq 0.
$$
Combining the last two equations yields
$$
\langle \Delta g, \Delta \theta + \Delta g \rangle = - \lambda  \cdot \langle \nabla \pi_2 - \nabla \pi_1, \Delta \theta + \Delta g \rangle \leq 0,
$$
and thus (using Cauchy-Schwartz to get the second inequality),
$$
\|\Delta g\|^2 \leq \langle \Delta g, - \Delta \theta\rangle \leq \|\Delta g\|  \cdot \|\Delta \theta\|,
$$
so that
$$
\|\Delta g\| \leq \|\Delta \theta\|.
$$
This proves that $g^\lambda(\theta)$ is Lipschitz with Lipschitz-constant 1.
\hfill$\qed$

\subsection{Proof of Lemma \ref{lem:ifrep} (Influence function approximation)}

    Assume first that Equation~\eqref{eq:loss_if} holds, so that $L_n(\theta) -  L_n(\theta_0) = \tfrac12 \|\theta - \tilde \theta_n\|^2 - \tfrac12 \|\tilde \theta_n - \theta_0\|^2 + \epsilon_n(\theta)$.
    We show, under this assumption, that $\sup_\lambda \|\hat \theta_n^\lambda-\tilde \theta_n^\lambda\|^2 = o_{\mu_n}(1)$. Leveraging convexity of $\pi$ and Lipschitz continuity of $g^\lambda$, we first bound the corresponding difference in penalized squared error, which allows us to bound the difference in squared error, and finally the difference between the estimators themselves.

    \paragraph{Bounding the difference in penalized squared error loss}
    Define 
    $$
      \tilde \theta_n^\lambda = 
      \argmin_\theta \left[\tfrac12 \|\theta - \tilde \theta_n\|^2 + \lambda  \cdot \pi(\theta)\right] =
      \tilde \theta_n+ g^\lambda(\tilde \theta_n).
    $$
    By definition,
    $$
    \hat \theta_n^\lambda = \argmin_\theta \left[L_n(\theta) + \lambda  \cdot \pi(\theta)\right]
    $$
    and thus
    $$L_n(\hat \theta_n^\lambda) + \lambda  \cdot \pi(\hat \theta_n^\lambda) \leq 
    L_n(\tilde \theta_n^\lambda) + \lambda  \cdot \pi(\tilde \theta_n^\lambda).$$
    Substituting for $L_n( \cdot )$ on both sides of this inequality, using Equation~\eqref{eq:loss_if} applied to both $\theta = \hat \theta_n^\lambda$ and $\theta = \tilde \theta_n^\lambda$, and rearranging yields
    \bal
    \left[\tfrac12 \|\hat \theta_n^\lambda-\tilde \theta_n\|^2 + \lambda  \cdot \pi(\hat \theta_n^\lambda)\right] - \left[\tfrac12 \|\tilde \theta_n^\lambda-\tilde \theta_n\|^2 + \lambda  \cdot \pi(\tilde \theta_n^\lambda)\right]
    &\leq \epsilon_n(\tilde \theta_n^\lambda) - \epsilon_n(\hat \theta_n^\lambda). \label{eq:epsilonbound}
    \eal

    \paragraph{Bounding the difference in squared error loss}
    We next prove the following claim: Convexity of $\pi$, and the definition $\tilde \theta_n^\lambda = 
      \argmin_\theta \left[\tfrac12 \|\theta - \tilde \theta_n\|^2 + \lambda  \cdot \pi(\theta)\right]$, imply  that, for any $\theta$, 
    \be
    \tfrac12 \|\theta-\tilde \theta_n^\lambda\|^2\leq
    \left[\tfrac12 \|\theta-\tilde \theta_n\|^2 + \lambda  \cdot \pi(\theta)\right] -  \left[\tfrac12 \|\tilde \theta_n^\lambda-\tilde \theta_n\|^2 + \lambda  \cdot \pi(\tilde \theta_n^\lambda)\right].
    \label{eq:convexity}
    \ee
    To show \eqref{eq:convexity}, denote $a(\theta) = \tfrac12 \|\theta-\tilde \theta_n\|^2$ and $b(\theta) = \lambda  \cdot \pi(\theta)$.
    We can write 
    $$
    \tfrac12 \|\theta-\tilde \theta_n^\lambda\|^2 = a(\theta) - a(\tilde \theta_n^\lambda) - \nabla a(\tilde \theta_n^\lambda) \cdot (\theta  - \tilde \theta_n^\lambda).
    $$
    By convexity of $\pi$ and optimality of $\tilde \theta_n^\lambda$, there exists a subgradient $\nabla b$ of $b$ such that
    $$
    \nabla a(\tilde \theta_n^\lambda) +\nabla b(\tilde \theta_n^\lambda) = 0.
    $$
    Eliminating the common terms $a(\theta) - a(\tilde \theta_n^\lambda)$ on the left and right hand side, we can now rewrite \eqref{eq:convexity} as
    $$
    - \nabla a(\tilde \theta_n^\lambda) \cdot (\theta  - \tilde \theta_n^\lambda) \leq b(\theta) - b(\tilde \theta_n^\lambda).
    $$
    But since $-\nabla a(\tilde \theta_n^\lambda) = \nabla b(\tilde \theta_n^\lambda)$, this inequality holds by convexity of $b$ and the definition of a subgradient, and the claim follows.
    
    \paragraph{Bounding the distance between estimators}

    Combining two inequalities \eqref{eq:epsilonbound} and \eqref{eq:convexity}  yields 
    $$
    \tfrac12 \|\hat \theta_n^\lambda-\tilde \theta_n^\lambda\|^2 \leq \epsilon_n(\tilde \theta_n^\lambda) - \epsilon_n(\hat \theta_n^\lambda).
    $$
   It follows from Assumption~\ref{as:ERM_conditions} item 2, which states that $\hat \theta_n^\lambda$ is bounded in probability, and the Lipschitzness of $g^\lambda$ (Lemma~\ref{lem:lipschitz}), which implies $\|\tilde \theta_n^\lambda\| \leq 2 \|\tilde \theta_n\|$, that both $\hat \theta_n^\lambda$ and $\tilde \theta_n^\lambda$ are bounded in probability. Combined with equation (\ref{eq:loss_if}), we consequently obtain that with probability approaching 1 under $\mu_n$, there exists $C < \infty$ such that
      $$
  \sup_\lambda  \tfrac12 \|\hat \theta_n^\lambda-\tilde \theta_n^\lambda\|^2  \le \left( 2 \sup_{\| \theta \| \le C} \epsilon_n(\theta) \right) = o(1).
    $$
   
    The statement for the ERM estimator follows as a special case, where $\lambda = 0$.\\

    \paragraph{Proving Equation~\eqref{eq:loss_if}, using empirical process theory}
    It remains to show that that \eqref{eq:loss_if} holds, where $\sup_{\| \theta \| \le C} \epsilon_n(\theta) = o_{\mu_n}(1)$.
    This claim follows from a straightforward generalization of the proof of Lemma 19.31 in \cite{van2000asymptotic} to the case of drifting distributions.
    
    Applicability of arguments of Lemma 19.31 in \cite{van2000asymptotic} is guaranteed by the conditions in Assumption \ref{as:ERM_conditions}, item 1. In particular, pointwise convergence, for fixed $\theta$, follows from almost sure differentiability of $l$, by dominated convergence, given the uniform bound on the variance of $m(Z_n^i)$ in Assumption \ref{as:ERM_conditions}.
    To get uniform convergence across values of $\theta$ in any ball of radius $\delta$ around $0$, tightness needs to be shown.
    Tightness follows from a bound on the bracketing number of the class of functions $\{\sqrt{n} (l_n(\theta, \cdot ) - l_n(0,  \cdot )):\; \|\theta\|\leq \delta \}$.
    The bound in the proof of Lemma 19.31 in \cite{van2000asymptotic} applies verbatim, with a constant $C$ that does not depend on $n$, based on the uniform bound on the variance of $m(Z_n^i)$ in Assumption \ref{as:ERM_conditions}.
    The claim follows.

The claim that $\sup_{\theta: \| \theta \| \le C} \nabla \epsilon_n(\theta) = o_{\mu_n}(1)$ follows from the same argument, applied to $\nabla_\theta l_n(\theta, Z_n^i)$, using the condition on scores in item 2 of Assumption~\ref{as:loss_fn_bounds}. 
    \hfill$\qed$

\subsection{Proof of Lemma \ref{lem:limitingloss} (Limiting squared error loss)}

By Definition~\ref{def:loss}, $\bar L_n(\theta, \theta_0) =  E\left[ L_n(\theta) - L_n(\theta_0)\right]$, and $\bar L_n(\theta, \theta_0)$ is minimized at $\theta =\theta_0$.
By a second order Taylor expansion around $\theta =\theta_0$,
$$
\bar L_n(\theta, \theta_0) = \tfrac12 (\theta - \theta_0)\cdot\nabla_\theta^2 \bar L_n(\tilde\theta, \theta_0) \cdot (\theta - \theta_0).
$$
for some $\tilde \theta $ between $\theta $ and $\theta_0$.
By Assumption~\ref{as:smoothloss}, $\nabla_\theta^2 \bar L(\theta, \theta_0)|_{\theta = \theta_0} = I.$
By definition,
$$
\nabla_\theta^2 \bar L_n(\theta, \theta_0) = \nabla_\beta^2 E\left[l(\beta ,Z_n^i)\right]\big |_{\beta =\theta/\sqrt{n}}.
$$
The claim of the lemma then follows from continuity of the Hessian of $\nabla_\beta^2 E\left[l(\beta ,Z_n^i)\right]$ at $\beta =0$.
Continuity of the Hessian follows from item 3 of Assumption~\ref{as:loss_fn_bounds}:
\bals
&\left\Vert\nabla_\beta^2 E\left[l(\beta ,Z_n^i)\right]\big |_{\beta =\theta/\sqrt{n}} - \nabla_\beta^2 E\left[l(\beta ,Z_n^i)\right]\big |_{\beta =0}\right\Vert\\
\leq &  E\left[\left\Vert\nabla_\beta^2l(\beta ,Z_n^i)\big |_{\beta =\theta/\sqrt{n}} - \nabla_\beta^2 l(\beta ,Z_n^i)\big |_{\beta =0}\right\Vert\right]\\
\leq &E\left[C_{n}(Z_{n}^{i})\right] \cdot \frac{\Vert\theta\Vert}{\sqrt{n}},
\eals
where $\sup_n E\left[C_{n}(Z_{n}^{i})\right]<\infty$; this follows from $\sup_n E[ C_n(Z_n^i)^2] < \infty$ (Assumption~\ref{as:loss_fn_bounds}.3) via Jensen's inequality.

\hfill$\qed$

\subsection{Proof of Corollary \ref{cor:asymptotic_penalizederm} (Asymptotic distribution for fixed tuning parameter)}

Recall that $\Var(X_n^1) = \Sigma$ is constant in $n$, by assumption.
Note furthermore that Assumption \ref{as:ERM_conditions} (item 1) implies the Lindeberg condition $$E\left[\|X_n^1\|^2  \cdot \bs 1(\|X_n^1\| > \sqrt{n} M)\right] \rightarrow 0$$ for all $M>0$, since $\|X_n^1\| \leq m(Z_n^1)$: $X_n^i = -\nabla_\beta l(\theta_0/\sqrt{n}, Z_n^i)$, and the Lipschitz condition in Assumption~\ref{as:ERM_conditions}.1 together with a.e.\ differentiability implies $\|\nabla_\beta l(\beta, Z_n^i)\| \leq m(Z_n^i)$ at all points of differentiability, and the variance of the latter is uniformly bounded.
The Lindeberg-Feller central limit theorem (Proposition 2.27 in \citealt{van2000asymptotic}), applied to the triangular array $(X_n^i)$, therefore implies $\tilde \theta_n \rightarrow^d N(\theta_0, \Sigma)$.

The claims of Corollary \ref{cor:asymptotic_penalizederm} then follow from Lemma \ref{lem:ifrep}, and the continuous mapping theorem, where continuity of $g^\lambda$ follows from convexity of $\pi$, by Lemma \ref{lem:lipschitz}.

   \hfill$\qed$

\clearpage
\section{Proof of Lemma \ref{lem:convergence_cv}}
\label{sec:cvconvergence}

\subsubsection*{Step 0 (Preliminary observations):}

We start by stating some useful results for the proof. 
First, note that by Lemma 1 in \cite{wilsonkasymackey2018} and Assumption~\ref{as:loss_fn_bounds}(i), it follows
\begin{align}
\sup_{\lambda}\left\Vert \hat{\theta}_{n}^{\lambda,-i}-\hat{\theta}_{n}^{\lambda}\right\Vert  &= O_{\mu_{n}}\left(\frac{1}{\mu}\left\Vert \nabla_{\theta}l_{n}(\hat{\theta}_{n},Z_{n}^{i})\right\Vert \right) =  O_{\mu_{n}}(n^{-1/2}).\label{eq:requirement_1}
\end{align}
An analogous argument implies
\begin{align}
\sup_{\lambda}\left\Vert \tilde{\theta}_{n}^{\lambda,-i}-\tilde{\theta}_{n}^{\lambda}\right\Vert  & =O_{\mu_{n}}(n^{-1/2}),\label{eq:requirement_3}
\end{align} 
where 
\begin{align}
\tilde{\theta}^{\lambda, -i} := \argmin_\theta \left[ \tfrac12\|\theta - \tilde{\theta}_n^{-i} \|^2 + \lambda  \cdot \pi(\theta)\right].
\end{align}
Second, recall from Lemma \ref{lem:ifrep} that 
\begin{equation}
\sup_{\lambda}\left\Vert \hat{\theta}_{n}^{\lambda}-\tilde{\theta}_{n}^{\lambda}\right\Vert =o_{\mu_{n}}(1).\label{eq:pf:requirement_2}
\end{equation}
The next set of results concern the properties of $g^{\lambda}(\cdot)$. Since $g^{\lambda}(\cdot)$ is Lipschitz continuous by Lemma \ref{lem:lipschitz}, it is differentiable
almost everywhere (by Rademacher's theorem). In particular, there exists an $R^{\lambda}(\cdot;\theta)$ such that 
\begin{equation}
g^{\lambda}(\theta+\delta)=g^{\lambda}(\theta)+\nabla g^{\lambda}(\theta)^{\intercal}\delta+R^{\lambda}(\delta;\theta),\label{eq:gradient_expansion_of_g_lambda}
\end{equation}
and
\begin{equation}
\lim_{\left\Vert \delta\right\Vert \to0}\frac{\left\Vert R^{\lambda}(\delta;\theta)\right\Vert }{\left\Vert \delta\right\Vert }=0\ \textrm{for each \ensuremath{\lambda} and (Lebesgue) almost every }\theta.\label{eq:pf:7}
\end{equation}
In fact, under Assumption \ref{as:ridgelasso}, we can strengthen  (\ref{eq:pf:7}) to:
 \begin{equation}
    \lim_{\left\Vert \delta\right\Vert \to0}\sup_{\lambda\in\Lambda}\frac{\left\Vert R^{\lambda}(\delta;\theta)\right\Vert }{\left\Vert \delta\right\Vert }=0\ \textrm{for (Lebesgue) almost every }\theta.
 \label{eq:pf:R_bound}   
\end{equation}
For Ridge, (\ref{eq:pf:R_bound}) is immediate, since $R^\lambda(\delta, \theta) = 0$. For Lasso, it follows from Lemma~\ref{lem:locallinear}, which implies $R^\lambda(\delta, \theta) = 0$ for $\delta$ small enough, except on a set of $\theta$ values with Lebesgue measure 0.

For values of $\theta$ where $\nabla g^{\lambda}(\theta)$ does not
exist, we somewhat arbitrarily set $\nabla g^{\lambda}(\theta)=0$
and define $R^{\lambda}(\delta;\theta)=g^{\lambda}(\theta+\delta)-g^{\lambda}(\theta)$
for these values; that way (\ref{eq:gradient_expansion_of_g_lambda})
always holds. 
Observe that due to Lemma \ref{lem:lipschitz}, $\left\Vert g^{\lambda}(\theta+\delta)-g^{\lambda}(\theta)\right\Vert \le\left\Vert \delta\right\Vert $
and $\left\Vert \nabla g^{\lambda}(\theta)\right\Vert \le1$ (whenever
the gradient exists), so 
\begin{equation}
\sup_{\lambda,\theta}\left\Vert R^{\lambda}(\delta;\theta)\right\Vert \le2\left\Vert \delta\right\Vert .\label{eq:pf:8}
\end{equation}

\subsubsection*{Step 1:}
We first show that 
\begin{equation}
\textrm{CV}_{n}(\lambda)=\sum_{i=1}^{n}l_{n}\left(\hat{\theta}_{n}^{\lambda},Z_n^{i}\right)+\frac{1}{\sqrt{n}}\sum_{i=1}^{n}\left\langle \tilde{\theta}_{n}^{\lambda,-i}-\tilde{\theta}_{n}^{\lambda},\sqrt{n}\nabla_{\theta}l_{n}\left(\tilde{\theta}_{n}^{\lambda},Z_{n}^{i}\right)\right\rangle +o_{\mu_{n}}(1),\label{eq:CV_Taylor_expansion}
\end{equation}
uniformly over $\lambda$.
By a first order Taylor expansion, 
\begin{align*}
\textrm{CV}_{n}(\lambda) & =\sum_{i=1}^{n}l_{n}\left(\hat{\theta}_{n}^{\lambda,-i},Z_{n}^{i}\right)\\
 & =\sum_{i=1}^{n}l_{n}\left(\hat{\theta}_{n}^{\lambda},Z_{n}^{i}\right)+\frac{1}{\sqrt{n}}\sum_{i=1}^{n}\left\langle \hat{\theta}_{n}^{\lambda,-i}-\hat{\theta}_{n}^{\lambda},\sqrt{n}\nabla_{\theta}l_{n}\left(\hat{\theta}_{n}^{\lambda},Z_{n}^{i}\right)\right\rangle \\
 & \qquad+\frac{1}{\sqrt{n}}\sum_{i=1}^{n}r_{n}\left(\hat{\theta}_{n}^{\lambda,-i},\hat{\theta}_{n}^{\lambda},Z_{n}^{i}\right),
\end{align*}
where 
\[
\left|r_{n}\left(\hat{\theta}_{n}^{\lambda,-i},\hat{\theta}_{n}^{\lambda},Z_{n}^{i}\right)\right|\le B_{n}(Z_{n}^{i})\cdot\left\Vert \hat{\theta}_{n}^{\lambda,-i}-\hat{\theta}_{n}^{\lambda}\right\Vert ^{2}
\]
by Assumption~\ref{as:loss_fn_bounds}. Hence, by the Cauchy-Schwarz inequality, 
\begin{align*}
 & \left|\frac{1}{\sqrt{n}}\sum_{i=1}^{n}r_{n}\left(\hat{\theta}_{n}^{\lambda,-i},\hat{\theta}_{n}^{\lambda},Z_{n}^{i}\right)\right|\\
 & \le\left(\frac{1}{n}\sum_{i=1}^{n}\left|B_{n}(Z_{n}^{i})\right|^{2}\right)^{1/2}\left(\sum_{i=1}^{n}\left\Vert \hat{\theta}_{n}^{\lambda,-i}-\hat{\theta}_{n}^{\lambda}\right\Vert ^{4}\right)^{1/2}.
\end{align*}
The first term on the right hand side of the above expression is $O_{\mu_{n}}(1)$
by Assumption~\ref{as:loss_fn_bounds}, while the second term is $O_{\mu_{n}}(n^{-1/2})$
by (\ref{eq:requirement_1}) and the two requirements of Assumption~\ref{as:loss_fn_bounds} since 
\begin{align}\label{eq:pf:bound_on_sum}
& \sup_{\lambda}\sum_{i=1}^{n}\left\Vert \hat{\theta}_{n}^{\lambda,-i}-\hat{\theta}_{n}^{\lambda}\right\Vert ^{4} \nonumber \\
&\le \frac{1}{n^{2}\mu^{4}}\sum_{i=1}^{n}\left\Vert \sqrt{n}\nabla_{\theta}l_{n}(\hat{\theta}_{n},Z_{n}^{i})\right\Vert ^{4} \nonumber\\
&\le \frac{8}{n^{2}\mu^{4}}\sum_{i=1}^{n}\left\Vert \sqrt{n}\nabla_{\theta}l_{n}(\hat{\theta}_{n},Z_{n}^{i}) - \sqrt{n}\nabla_{\theta}l_{n}(\theta_0,Z_{n}^{i})\right\Vert ^{4} +
\frac{8}{n^{2}\mu^{4}}\sum_{i=1}^{n}\left\Vert \sqrt{n}\nabla_{\theta}l_{n}(\theta_0,Z_{n}^{i})\right\Vert ^{4} \nonumber\\
&= O_{\mu_{n}}(n^{-1}),
\end{align}
so the expression overall is $O_{\mu_{n}}(n^{-1/2})$.

We now show that, uniformly over $\lambda$, one can approximate
\begin{equation}
\frac{1}{\sqrt{n}}\sum_{i=1}^{n}\left\langle \hat{\theta}_{n}^{\lambda,-i}-\hat{\theta}_{n}^{\lambda},\sqrt{n}\nabla_{\theta}l_{n}\left(\hat{\theta}_{n}^{\lambda},Z_{n}^{i}\right)\right\rangle .\label{eq:pf:3}
\end{equation}
with
\[
\frac{1}{\sqrt{n}}\sum_{i=1}^{n}\left\langle \tilde{\theta}_{n}^{\lambda,-i}-\tilde{\theta}_{n}^{\lambda},\sqrt{n}\nabla_{\theta}l_{n}\left(\tilde{\theta}_{n}^{\lambda},Z_{n}^{i}\right)\right\rangle .
\]
To this end, we first argue that (\ref{eq:pf:3}) can be approximated
with
\[
\frac{1}{\sqrt{n}}\sum_{i=1}^{n}\left\langle \hat{\theta}_{n}^{\lambda,-i}-\hat{\theta}_{n}^{\lambda},\sqrt{n}\nabla_{\theta}l_{n}\left(\tilde{\theta}_{n}^{\lambda},Z_{n}^{i}\right)\right\rangle .
\]
By the Cauchy-Schwarz inequality, the approximation error is bounded
by 
\begin{equation}
\sup_{\lambda\in\Lambda}\left(\sum_{i=1}^{n}\left\Vert \hat{\theta}_{n}^{\lambda,-i}-\hat{\theta}_{n}^{\lambda}\right\Vert ^{2}\right)^{1/2}\cdot\sup_{\lambda\in\Lambda}\left(\frac{1}{n}\sum_{i=1}^{n}\left\Vert \sqrt{n}\nabla_{\theta}l_{n}\left(\hat{\theta}_{n}^{\lambda},Z_{n}^{i}\right)-\sqrt{n}\nabla_{\theta}l_{n}\left(\tilde{\theta}_{n}^{\lambda},Z_{n}^{i}\right)\right\Vert ^{2}\right)^{1/2}.\label{eq:pf:4}
\end{equation}
The first term in (\ref{eq:pf:4}) is $O_{\mu_{n}}(1)$ by (\ref{eq:requirement_1}) and Assumption~\ref{as:loss_fn_bounds} (the argument is analogous to \ref{eq:pf:bound_on_sum}).
The second term in (\ref{eq:pf:4}) is $o_{\mu_{n}}(1)$ under (\ref{eq:pf:requirement_2})
and Assumption~\ref{as:loss_fn_bounds}(ii). 

It then remains to show 
\begin{multline*}
\sup_{\lambda\in\Lambda}\left|\frac{1}{\sqrt{n}}\sum_{i=1}^{n}\left\langle \hat{\theta}_{n}^{\lambda,-i}-\hat{\theta}_{n}^{\lambda},\sqrt{n}\nabla_{\theta}l_{n}\left(\tilde{\theta}_{n}^{\lambda},Z_{n}^{i}\right)\right\rangle\right.\\
 \left.-\frac{1}{\sqrt{n}}\sum_{i=1}^{n}\left\langle \tilde{\theta}_{n}^{\lambda,-i}-\tilde{\theta}_{n}^{\lambda},\sqrt{n}\nabla_{\theta}l_{n}\left(\tilde{\theta}_{n}^{\lambda},Z_{n}^{i}\right)\right\rangle \right|=o_{\mu_{n}}(1).
\end{multline*}

By the Cauchy-Schwarz inequality, the expression on the left is bounded
by
\[
\sup_{\lambda\in\Lambda}\left(\sum_{i=1}^{n}\left\Vert \left(\hat{\theta}_{n}^{\lambda,-i}-\tilde{\theta}_{n}^{\lambda,-i}\right)-\left(\hat{\theta}_{n}^{\lambda}-\tilde{\theta}_{n}^{\lambda}\right)\right\Vert ^{2}\right)^{1/2}\cdot\sup_{\lambda}\left(\frac{1}{n}\sum_{i=1}^{n}\left\Vert \sqrt{n}\nabla_{\theta}l_{n}\left(\tilde{\theta}_{n}^{\lambda},Z_{n}^{i}\right)\right\Vert ^{2}\right)^{1/2}.
\]
The second term in the above expression is $O_{\mu_n}(1)$ by Assumption~\ref{as:loss_fn_bounds}(ii) (4th moment bound on scores) and the Lipschitz condition, since $\tilde{\theta}_n^\lambda$ is bounded in probability. At the end of this proof, we analyze the first term, showing that
$$
\sup_{\lambda\in\Lambda}\left(\sum_{i=1}^{n}\left\Vert \left(\hat{\theta}_{n}^{\lambda,-i}-\tilde{\theta}_{n}^{\lambda,-i}\right)-\left(\hat{\theta}_{n}^{\lambda}-\tilde{\theta}_{n}^{\lambda}\right)\right\Vert ^{2}\right) = o_{\mu_n}(1). 
$$
Combining the above results proves (\ref{eq:CV_Taylor_expansion}). 

\subsubsection*{Step 2:}

Next, we show that uniformly over $\lambda$, the term 
\begin{equation}
\frac{1}{\sqrt{n}}\sum_{i=1}^{n}\left\langle \tilde{\theta}_{n}^{\lambda,-i}-\tilde{\theta}_{n}^{\lambda},\sqrt{n}\nabla_{\theta}l_{n}\left(\tilde{\theta}_{n}^{\lambda},Z_{n}^{i}\right)\right\rangle \label{eq:pf:5}
\end{equation}
in (\ref{eq:CV_Taylor_expansion}) can be approximated
by 
\[
\frac{1}{\sqrt{n}}\sum_{i=1}^{n}\left\langle \tilde{\theta}_{n}^{\lambda,-i}-\tilde{\theta}_{n}^{\lambda}, -X_n^i\right\rangle .
\]
Indeed, under Assumption~\ref{as:loss_fn_bounds}(iii), we have 
\begin{align} \label{eq:pf:6}
 & \frac{1}{\sqrt{n}}\sum_{i=1}^{n}\left\langle \tilde{\theta}_{n}^{\lambda,-i}-\tilde{\theta}_{n}^{\lambda},\sqrt{n}\nabla_{\theta}l_{n}\left(\tilde{\theta}_{n}^{\lambda},Z_{n}^{i}\right)\right\rangle -\frac{1}{\sqrt{n}}\sum_{i=1}^{n}\left\langle \tilde{\theta}_{n}^{\lambda,-i}-\tilde{\theta}_{n}^{\lambda},-X_n^i\right\rangle \nonumber \\
 & =\frac{1}{n}\sum_{i=1}^{n}\left\langle \tilde{\theta}_{n}^{\lambda,-i}-\tilde{\theta}_{n}^{\lambda},n\nabla_{\theta}^{2}l_{n}\left(\theta_{0},Z_{n}^{i}\right)\cdot\left(\tilde{\theta}_{n}^{\lambda}-\theta_{0}\right)\right\rangle +\frac{1}{n}\sum_{i=1}^{n}\left\langle \tilde{\theta}_{n}^{\lambda,-i}-\tilde{\theta}_{n}^{\lambda},\Delta_{n}\left(\tilde{\theta}_{n}^{\lambda}-\theta_{0},Z_{n}^{i}\right)\right\rangle ,
\end{align}
where 
\[
\left\Vert \Delta_{n}\left(\tilde{\theta}_{n}^{\lambda}-\theta_{0},Z_{n}^{i}\right)\right\Vert \le C_{n}(Z_{n}^{i})\cdot\left\Vert \tilde{\theta}_{n}^{\lambda}-\theta_{0}\right\Vert ^{2},
\]
and the function $C_n(\cdot)$ is defined in  Assumption~\ref{as:loss_fn_bounds}(iii). 

By the Cauchy-Schwarz inequality, Assumption~\ref{as:loss_fn_bounds} and (\ref{eq:requirement_3}),
\begin{align*}
 & \sup_{\lambda\in\Lambda}\left|\frac{1}{n}\sum_{i=1}^{n}\left\langle \tilde{\theta}_{n}^{\lambda,-i}-\tilde{\theta}_{n}^{\lambda},n\nabla_{\theta}^{2}l_{n}\left(\theta_{0},Z_{n}^{i}\right)\cdot\left(\tilde{\theta}_{n}^{\lambda}-\theta_{0}\right)\right\rangle \right|\\
 & \le n^{-1/2}\cdot\sup_{\lambda\in\Lambda}\left(\sum_{i=1}^{n}\left\Vert \tilde{\theta}_{n}^{\lambda,-i}-\tilde{\theta}_{n}^{\lambda}\right\Vert ^{2}\right)^{1/2}\cdot\sup_{\lambda\in\Lambda}\left(\frac{1}{n}\sum_{i=1}^{n}\left\Vert n\nabla_{\theta}^{2}l_{n}\left(\theta_{0},Z_{n}^{i}\right)\right\Vert ^{2}\right)^{1/2}\cdot\sup_{\lambda\in\Lambda}\left\Vert \tilde{\theta}_{n}^{\lambda}-\theta_{0}\right\Vert \\
 & =n^{-1/2}\cdot O_{\mu_{n}}(1)\cdot O_{\mu_{n}}(1)\cdot O_{\mu_{n}}(1)=O_{\mu_{n}}(n^{-1/2}).
\end{align*}
This proves that the first term in the right hand side of (\ref{eq:pf:6})
is $o_{\mu_{n}}(1)$ uniformly over $\lambda$. By an analogous argument,
the second term in the right hand side of (\ref{eq:pf:6}) is also $o_{\mu_{n}}(1)$ uniformly over $\lambda$.

\subsubsection*{Step 3:}

It thus remains to show
\[
\frac{1}{\sqrt{n}}\sum_{i=1}^{n}\left\langle \tilde{\theta}_{n}^{\lambda,-i}-\tilde{\theta}_{n}^{\lambda},\sqrt{n}\nabla_{\theta}l_{n}\left(\theta_{0},Z_{n}^{i}\right)\right\rangle 
\]
is asymptotically equivalent to the degrees of freedom term in SURE. 

By the definition of $R^{\lambda}(\cdot)$, we may write 
\[
\tilde{\theta}_{n}^{\lambda,-i}-\tilde{\theta}_{n}^{\lambda} = -\frac{1}{\sqrt{n}} X_n^i - \frac{1}{\sqrt{n}} \nabla g^{\lambda}\left(\tilde{\theta}_{n}\right)^{\intercal} X_n^i + R^{\lambda}(\tilde{\theta}_{n}^{-i}-\tilde{\theta}_{n};\tilde{\theta}_{n}).
\]
We can thus expand 
\begin{align}
 & \frac{1}{\sqrt{n}}\sum_{i=1}^{n}\left\langle \tilde{\theta}_{n}^{\lambda,-i}-\tilde{\theta}_{n}^{\lambda},\sqrt{n}\nabla_{\theta}l_{n}\left(\theta_{0},Z_{n}^{i}\right)\right\rangle \nonumber \\
 & =\frac{1}{n}\sum_{i=1}^{n}\left\langle -X_n^i,\sqrt{n}\nabla_{\theta}l_{n}\left(\theta_{0},Z_{n}^{i}\right)\right\rangle +
 \frac{1}{n}\sum_{i=1}^{n}\left\langle \nabla g^{\lambda}\left(\tilde{\theta}_{n}\right)^{\intercal} X_n^i, -\sqrt{n}\nabla_{\theta}l_{n}\left(\theta_{0},Z_{n}^{i}\right)\right\rangle \nonumber \\
 & \ +\frac{1}{\sqrt{n}}\sum_{i=1}^{n}\left\langle R^{\lambda}(\tilde{\theta}_{n}^{-i}-\tilde{\theta}_{n};\tilde{\theta}_{n}), \sqrt{n}\nabla_{\theta}l_{n}\left(\theta_{0},Z_{n}^{i}\right)\right\rangle .\label{eq:pf:9}
\end{align}

The first term in (\ref{eq:pf:9}) is independent of $\lambda$ and
can therefore be neglected. 

The second term in (\ref{eq:pf:9}) is asymptotically equivalent to
the degrees of freedom term in SURE. 
Indeed, since $\tilde{\theta}_{n}^{-i}-\tilde{\theta}_{n} = \nabla_{\theta}l_{n}\left(\theta_{0},Z_{n}^{i}\right)$,
we can write
\begin{align*}
 &  \frac{1}{n}\sum_{i=1}^{n}\left\langle \nabla g^{\lambda}\left(\tilde{\theta}_{n}\right)^{\intercal} X_n^i, -\sqrt{n}\nabla_{\theta}l_{n}\left(\theta_{0},Z_{n}^{i}\right)\right\rangle \\
 & =\frac{1}{n}\sum_{i=1}^{n}\left\langle\nabla g^{\lambda}\left(\tilde{\theta}_{n}\right)^{\intercal} X_n^i, X_n^i\right\rangle \\
 & =\textrm{Tr}\left[\nabla g^{\lambda}\left(\tilde{\theta}_{n}\right)^{\intercal}\hat{\Sigma}_{n}\right],
\end{align*}
where 
\[
\hat{\Sigma}_{n}:=\frac{1}{n}\sum_{i=1}^{n} (X_n^i)  (X_n^i)^\intercal.
\]
But by the law of large of numbers, which can be applied here due
to Assumption~\ref{as:loss_fn_bounds}(ii), $\hat{\Sigma}_{n}=\Sigma+o_{\mu_{n}}(1)$.
We thus conclude that 
\[
 \frac{1}{n}\sum_{i=1}^{n}\left\langle \nabla g^{\lambda}\left(\tilde{\theta}_{n}\right)^{\intercal} X_n^i, -\sqrt{n}\nabla_{\theta}l_{n}\left(\theta_{0},Z_{n}^{i}\right)\right\rangle =
 \textrm{Tr}\left[\nabla g^{\lambda}\left(\tilde{\theta}_{n}\right)^{\intercal} \Sigma \right]+o_{\mu_{n}}(1),
\]
uniformly over $\lambda\in\Lambda$.

It remains to show the third term in (\ref{eq:pf:9}) is negligible,
i.e.,
\[
\frac{1}{\sqrt{n}}\sum_{i=1}^{n}\left\langle R^{\lambda}(\tilde{\theta}_{n}^{-i}-\tilde{\theta}_{n};\tilde{\theta}_{n}), \sqrt{n}\nabla_{\theta}l_{n}\left(\theta_{0},Z_{n}^{i}\right)\right\rangle=o_{\mu_{n}}(1).
\]
Recall that $\tilde{\theta}_{n}^{-i}-\tilde{\theta}_{n} = -X_n^i/\sqrt{n}$. Fix some $a\in(0,1/2)$ and $C<\infty$, and expand
\begin{align*}
 & \frac{1}{\sqrt{n}}\sum_{i=1}^{n}\left\langle R^{\lambda}(\tilde{\theta}_{n}^{-i}-\tilde{\theta}_{n};\tilde{\theta}_{n}), \sqrt{n}\nabla_{\theta}l_{n}\left(\theta_{0},Z_{n}^{i}\right)\right\rangle
  = \frac{1}{\sqrt{n}}\sum_{i=1}^{n}\left\langle R^{\lambda}(-X_n^i/\sqrt{n};\tilde{\theta}_{n}), -X_n^i\right\rangle \\
 & =\frac{1}{\sqrt{n}}\sum_{i=1}^{n}\left\langle R^{\lambda}(-X_n^i/\sqrt{n};\tilde{\theta}_{n})\cdot\mathbb{I}\left\{ \left\Vert X_n^i \right\Vert \ge Cn^{a}\right\} , -X_n^i \right\rangle \\
 & \ +\frac{1}{\sqrt{n}}\sum_{i=1}^{n}\left\langle R^{\lambda}(-X_n^i/\sqrt{n};\tilde{\theta}_{n})\cdot\mathbb{I}\left\{ \left\Vert X_n^i \right\Vert <Cn^{a}\right\} , -X_n^i \right\rangle \\
 & :=A_{n1}^{\lambda}+A_{n2}^{\lambda}.
\end{align*}
We analyze the terms $A_{n1}^{\lambda}$ and $A_{n2}^{\lambda}$ separately.
For the term $A_{n1}^{\lambda}$, observe that by (\ref{eq:pf:8}),
\[
\sup_{\lambda\in\Lambda}\vert A_{n1}^{\lambda}\vert  \le\frac{2}{n}\sum_{i=1}^{n}\left\Vert X_n^i \right\Vert^2 \cdot\mathbb{I}\left\{ \left\Vert X_n^i\right\Vert \ge Cn^{a}\right\}. 
\]
Consequently, under Assumption~\ref{as:loss_fn_bounds}(ii) and the given choice of $a$,
\begin{align*}
\mathbb{E}_{\mu_{n}}\left[\vert A_{n1}^{\lambda}\vert\right] & \le\frac{2}{C^{2}n^{2a}}\mathbb{E}_{\mu_{n}}\left[\left\Vert X_n^i\right\Vert ^{4}\right]\to0
\end{align*}
as $n\to\infty$. Thus, $\sup_{\lambda\in\Lambda}A_{n1}^{\lambda}=o_{\mu_{n}}(1)$. 

Next, we show $\sup_{\lambda\in\Lambda}A_{n2}^{\lambda}=o_{\mu_{n}}(1)$.
Due to exchangeability over $i$, this follows if we show that 
\begin{equation}
\lim_{n\to\infty}\sqrt{n}\mathbb{E}_{\mu_{n}}\left[\left|\left\langle R^{\lambda}(-X_n^i/\sqrt{n};\tilde{\theta}_{n})\cdot\mathbb{I}_{\Gamma_{i}} , -X_n^i \right\rangle \right|\right]=0,\label{eq:2}
\end{equation}
where we use $\mathbb{I}_{\Gamma_{i}}$ as a short-hand for $\mathbb{I}\left\{ \left\Vert X_n^i\right\Vert <Cn^{a}\right\} $.
Now, by the Cauchy-Schwarz inequality, 
\begin{align*}
 & \sqrt{n}\mathbb{E}_{\mu_{n}}\left[\left|\left\langle R^{\lambda}(-X_n^i/\sqrt{n};\tilde{\theta}_{n})\cdot\mathbb{I}_{\Gamma_{i}} , -X_n^i \right\rangle \right|\right]\\
 & \le\mathbb{E}_{\mu_{n}}^{1/2}\left[\sup_{\lambda\in\Lambda}\frac{\left\Vert R^{\lambda}\left(-X_n^i/\sqrt{n};\tilde{\theta}_{n}\right)\right\Vert ^{2}}{\left\Vert -X_n^i/\sqrt{n}\right\Vert ^{2}}\cdot\mathbb{I}_{\Gamma_{i}}\right]\cdot\mathbb{E}_{\mu_{n}}^{1/2}\left[\left\Vert X_n^i\right\Vert ^{4}\right].
\end{align*}
But $\mathbb{E}_{\mu_{n}}^{1/2}\left[\left\Vert X_n^i\right\Vert ^{4}\right] \le M < \infty$ under Assumption~\ref{as:loss_fn_bounds}(ii), so (\ref{eq:2}) would follow if we show that
\begin{equation}
\lim_{n\to\infty}\mathbb{E}_{\mu_{n}}\left[\sup_{\lambda\in\Lambda}\frac{\left\Vert R^{\lambda}\left(-X_n^i/\sqrt{n};\tilde{\theta}_{n}\right)\right\Vert ^{2}}{\left\Vert -X_n^i/\sqrt{n}\right\Vert ^{2}}\cdot\mathbb{I}_{\Gamma_{i}}\right]=0.\label{eq:3}
\end{equation}

To prove (\ref{eq:3}), observe that it is without loss of generality
to suppose $R^{\lambda}(\delta;\theta)=R^{\lambda}(\left\Vert \delta\right\Vert ;\theta)$,
i.e., that it depends only on $\left\Vert \delta\right\Vert $, and
that $R^{\lambda}(\left\Vert \delta\right\Vert ;\theta)/\left\Vert \delta\right\Vert $
is increasing in $\left\Vert \delta\right\Vert $. Otherwise, we can
simply define 
\[
\bar{R}^{\lambda}(\left\Vert \delta\right\Vert ;\theta)=\sup_{\left\Vert \delta^{\prime}\right\Vert \le\left\Vert \delta\right\Vert }\frac{R^{\lambda}(\delta^{\prime};\theta)}{\left\Vert \delta^{\prime}\right\Vert },
\]
and this would satisfy these two conditions while still retaining
the property (\ref{eq:pf:7}). Consequently, the left hand side of
(\ref{eq:3}) can be bounded as 
\begin{align*}
\mathbb{E}_{\mu_{n}}\left[\sup_{\lambda\in\Lambda}\frac{\left\Vert R^{\lambda}\left(\Vert X_n^i/\sqrt{n} \Vert;\tilde{\theta}_{n}\right)\right\Vert ^{2}}{\left\Vert X_n^i/\sqrt{n}\right\Vert ^{2}}\cdot\mathbb{I}_{\Gamma_{i}}\right] & \le\mathbb{E}_{\mu_{n}}\left[\sup_{\lambda\in\Lambda}\left\Vert \frac{R^{\lambda}\left(Cn^{a - \frac{1}{2}};\tilde{\theta}_{n}\right)}{Cn^{a - \frac{1}{2}}}\right\Vert ^{2}\right]\\
 & =\mathbb{E}_{\mu_{n}}\left[\sup_{\lambda\in\Lambda}\left\Vert B^{\lambda}(Cn^{a - \frac{1}{2}},\tilde{\theta}_{n})\right\Vert ^{2}\right],
\end{align*}
where 
\[
B^{\lambda}(\delta,\tilde{\theta}_{n}):=\frac{\left\Vert R^{\lambda}\left(\delta;\tilde{\theta}_{n}\right)\right\Vert }{\left\Vert \delta\right\Vert ^ {}}.
\]
We now bound 
\[
\mathbb{E}_{\mu_{n}}\left[\sup_{\lambda\in\Lambda}\left\Vert B^{\lambda}(Cn^{a - \frac{1}{2}},\tilde{\theta}_{n})\right\Vert ^{2}\right].
\]
Fix some $\epsilon>0$. By the requirement that $R^{\lambda}(\left\Vert \delta\right\Vert ;\theta)/\left\Vert \delta\right\Vert $
is increasing in $\left\Vert \delta\right\Vert $, along with the fact $a < 1/2$, there exists $\bar{n}$
large enough so that $B^{\lambda}(Cn^{a - \frac{1}{2}},\theta)\le B^{\lambda}(\epsilon,\theta)$
for each $n \ge \bar{n}$,  $\theta\in\mathbb{R}^{d}$ and $\lambda\in\Lambda$. Now,
it is straightforward to show 
\[
\tilde{\theta}_{n}\xrightarrow[\mu_{n}]{d}Z\sim N(\theta_{0},\Sigma).
\]
We then have
\begin{align*}
 & \lim_{n\to\infty}\mathbb{E}_{\mu_{n_{}}}\left[\sup_{\lambda\in\Lambda}\left\Vert B^{\lambda}(Cn^{a - \frac{1}{2}},\tilde{\theta}_{n})\right\Vert ^{2}\right]\\
 & \le\lim_{n\to\infty}\mathbb{E}_{\mu_{n_{}}}\left[\sup_{\lambda\in\Lambda}\left\Vert B^{\lambda}(\epsilon,\tilde{\theta}_{n})\right\Vert ^{2}\right]=\mathbb{E}\left[\sup_{\lambda\in\Lambda}\left\Vert B^{\lambda}(\epsilon,Z)\right\Vert ^{2}\right],
\end{align*}
where the equality follows from the properties of weak convergence
since equation (\ref{eq:pf:8}) implies $\sup_{\lambda\in\Lambda}\left\Vert B^{\lambda}(\epsilon,\theta)\right\Vert \le2$
uniformly over $\theta$. But (\ref{eq:pf:R_bound})
implies $\lim_{\epsilon^{\prime}\to0}\sup_{\lambda\in\Lambda}B^{\lambda}(\epsilon^{\prime},\theta)=0$
for every $\theta\in\mathbb{R}^{d}$ excluding a set of Lebesgue measure
$0$. Since the Gaussian distribution is absolutely continuous with respect to
the Lebesgue measure, it then follows by the dominated convergence
theorem that 
\[
\lim_{\epsilon^{\prime}\to0}\mathbb{E}\left[\sup_{\lambda\in\Lambda}\left\Vert B^{\lambda}(\epsilon^{\prime},Z)\right\Vert ^{2}\right]=0.
\]
Since $\epsilon>0$ was arbitrary, we conclude 
\[
\lim_{n\to\infty}\mathbb{E}_{\mu_{n}}\left[\sup_{\lambda\in\Lambda}\left\Vert B^{\lambda}(Cn^{a - \frac{1}{2}},\tilde{\theta}_{n})\right\Vert ^{2}\right]=0.
\]
This proves (\ref{eq:3}).\\

It remains to prove the claim, made in Step 2, that 
$$
\sup_{\lambda\in\Lambda}\left(\sum_{i=1}^{n}\left\Vert \left(\hat{\theta}_{n}^{\lambda,-i}-\tilde{\theta}_{n}^{\lambda,-i}\right)-\left(\hat{\theta}_{n}^{\lambda}-\tilde{\theta}_{n}^{\lambda}\right)\right\Vert ^{2}\right) = o_{\mu_n}(1). 
$$
For the remainder of this proof, we make a case distinction between Ridge penalties and Lasso penalties.
\subsubsection*{Step 4 (Ridge):}
We first specialize to the case of quadratic penalties. Define $\tilde{L}_{n}^{-i}(\theta)=\frac{1}{2}\left\Vert \theta-\tilde{\theta}_{n}^{-i}\right\Vert ^{2}$.
Observe that $\tilde{\theta}_{n}^{\lambda,-i}=\argmin_{\theta}\left\{ \tilde{L}_{n}^{-i}(\theta)+\lambda\pi(\theta)\right\} $.
Consequently, 
\begin{align*}
 & \nabla_{\theta}\left\{ \tilde{L}_{n}^{-i}(\hat{\theta}_{n}^{\lambda,-i})+\lambda\pi(\hat{\theta}_{n}^{\lambda,-i})\right\} \\
 & =\nabla_{\theta}\left\{ \tilde{L}_{n}^{-i}(\hat{\theta}_{n}^{\lambda,-i})+\lambda\pi(\hat{\theta}_{n}^{\lambda,-i})\right\} -\nabla_{\theta}\left\{ \tilde{L}_{n}^{-i}(\tilde{\theta}_{n}^{\lambda,-i})+\lambda\pi(\tilde{\theta}_{n}^{\lambda,-i})\right\} \\
 & =\left\{ \hat{\theta}_{n}^{\lambda,-i}-\tilde{\theta}_{n}^{\lambda,-i}\right\} +\lambda\nabla_{\theta}\left\{ \pi(\hat{\theta}_{n}^{\lambda,-i})-\pi(\tilde{\theta}_{n}^{\lambda,-i})\right\} .
\end{align*}
But $\nabla_{\theta}\left\{ L_{n}^{-i}(\hat{\theta}_{n}^{\lambda,-i})+\lambda\pi(\hat{\theta}_{n}^{\lambda,-i})\right\} =0$,
so we obtain
\[
\left\{ \hat{\theta}_{n}^{\lambda,-i}-\tilde{\theta}_{n}^{\lambda,-i}\right\} +\lambda\nabla_{\theta}\left\{ \pi(\hat{\theta}_{n}^{\lambda,-i})-\pi(\tilde{\theta}_{n}^{\lambda,-i})\right\} =\nabla_{\theta}\tilde{L}_{n}^{-i}(\hat{\theta}_{n}^{\lambda,-i})-\nabla_{\theta}L_{n}^{-i}(\hat{\theta}_{n}^{\lambda,-i}).
\]
In a similar vein, 
\[
\left\{ \hat{\theta}_{n}^{\lambda}-\tilde{\theta}_{n}^{\lambda}\right\} +\lambda\nabla_{\theta}\left\{ \pi(\hat{\theta}_{n}^{\lambda})-\pi(\tilde{\theta}_{n}^{\lambda})\right\} =\nabla_{\theta}\tilde{L}_{n}(\hat{\theta}_{n}^{\lambda})-\nabla_{\theta}L_{n}(\hat{\theta}_{n}^{\lambda}).
\]

When $\pi(\theta)$ is the ridge penalty $\frac{1}{2} \theta^\intercal A^{-1} \theta$, we have
\[
\left(\hat{\theta}_{n}^{\lambda,-i}-\tilde{\theta}_{n}^{\lambda,-i}\right)-\left(\hat{\theta}_{n}^{\lambda}-\tilde{\theta}_{n}^{\lambda}\right)= (I + \lambda A^{-1})^{-1}\left\{ \nabla_{\theta}\left(\tilde{L}_{n}^{-i}-L_{n}^{-i}\right)(\hat{\theta}_{n}^{\lambda,-i})-\nabla_{\theta}\left(\tilde{L}_{n}-L_{n}\right)(\hat{\theta}_{n}^{\lambda})\right\} .
\]

By a third order Taylor expansion,
\begin{align*}
 & L_{n}^{-i}(\theta)-L_{n}^{-i}(\theta_{0})\\
 & =\frac{1}{\sqrt{n}}\left\{ \sqrt{n}\nabla_{\theta}L_{n}^{-i}(\theta_{0})\right\} ^{\intercal}(\theta-\theta_{0})+\frac{1}{2n}(\theta-\theta_{0})^{\intercal}\left\{ n\nabla_{\theta}^{2}L_{n}^{-i}(\theta_{0})\right\} (\theta-\theta_{0})\\
 & \quad+\frac{1}{6n^{3/2}}D_\theta^{3}L_{n}^{-i}(\bar{\theta})[\theta-\theta_{0},\theta-\theta_{0},\theta-\theta_{0}],
\end{align*}
for some $\bar{\theta}$ between $\theta$ and $\theta_{0}$. At the
same time, 
\[
\tilde{L}_{n}^{-i}(\theta)-\tilde{L}_{n}^{-i}(\theta_{0})=\frac{1}{\sqrt{n}}\left\{ \sqrt{n}\nabla_{\theta}L_{n}^{-i}(\theta_{0})\right\} ^{\intercal}(\theta-\theta_{0})+\frac{1}{2}(\theta-\theta_{0})^{\intercal}(\theta-\theta_{0}).
\]
Therefore, by Assumption~\ref{as:loss_fn_bounds}(iv), which implies
\[
\mathbb{E}_{\mu_{n}}\left[\sup_{\theta\in\Theta}\left\Vert n^{2}D_\theta^{4}l_{n}\left(\theta,Z_{n}^{i}\right)\right\Vert ^{4}\right]\le M,
\]
we obtain
\begin{align*}
\nabla_{\theta}\left(\tilde{L}_{n}^{-i}-L_{n}^{-i}\right)(\theta) & =\left\{ \nabla_{\theta}^{2}L_{n}^{-i}(\theta_{0})-I\right\} (\theta-\theta_{0})+O_{\mu_{n}}(n^{-3/2}).
\end{align*}
In a similar vein, 
\begin{align*}
\nabla_{\theta}\left(\tilde{L}_{n}-L_{n}\right)(\theta) & =\left\{ \nabla_{\theta}^{2}L_{n}(\theta_{0})-I\right\} (\theta-\theta_{0})+O_{\mu_{n}}(n^{-3/2}).
\end{align*}

Taken together, we conclude 
\begin{align*}
 & \left\{ \nabla_{\theta}\left(\tilde{L}_{n}^{-i}-L_{n}^{-i}\right)(\hat{\theta}_{n}^{\lambda,-i})-\nabla_{\theta}\left(\tilde{L}_{n}-L_{n}\right)(\hat{\theta}_{n}^{\lambda})\right\} \\
 & =\left\{ \nabla_{\theta}^{2}L_{n}^{-i}(\theta_{0})-I\right\} \left(\hat{\theta}_{n}^{\lambda,-i}-\theta_{0}\right)-\left\{ \nabla_{\theta}^{2}L_{n}(\theta_{0})-I\right\} \left(\hat{\theta}_{n}^{\lambda}-\theta_{0}\right)+O_{\mu_{n}}(n^{-3/2})\\
 & =-\frac{1}{n}\left\{ n\nabla_{\theta}^{2}l_{n}(\theta_{0},Z_{n}^{i})\right\} \left(\hat{\theta}_{n}^{\lambda,-i}-\theta_{0}\right)+\left\{ \nabla_{\theta}^{2}L_{n}(\theta_{0})-I\right\} \left(\hat{\theta}_{n}^{\lambda,-i}-\hat{\theta}_{n}^{\lambda}\right)+O_{\mu_{n}}(n^{-3/2}).
\end{align*}
Hence, 
\begin{align*}
 & \sum_{i=1}^{n}\left\Vert \left(\hat{\theta}_{n}^{\lambda,-i}-\tilde{\theta}_{n}^{\lambda,-i}\right)-\left(\hat{\theta}_{n}^{\lambda}-\tilde{\theta}_{n}^{\lambda}\right)\right\Vert ^{2}\\
 & \le\frac{2}{n^{2}}\left(\sum_{i}\left\Vert n\nabla_{\theta}^{2}l_{n}(\theta_{0},Z_{n}^{i})\right\Vert ^{2}\cdot\left\Vert \hat{\theta}_{n}^{\lambda,-i}-\theta_0\right\Vert ^{2}\right)\\
 & \quad+2\left\Vert \nabla_{\theta}^{2}L_{n}(\theta_{0})-I\right\Vert ^{2}\cdot\left(\sum_{i}\left\Vert \hat{\theta}_{n}^{\lambda,-i}-\hat{\theta}_{n}^{\lambda}\right\Vert ^{2}\right)+O_{\mu_{n}}(n^{-3/2})\\
 & =o_{\mu_{n}}(1).
\end{align*}

\subsubsection*{Step 4 (Lasso):}
We finally prove that for the Lasso penalty, we again have that
$$
DD_{n} := \left(\sum_{i=1}^{n}\left\Vert \left(\hat{\theta}_{n}^{\lambda,-i}-\tilde{\theta}_{n}^{\lambda,-i}\right)-\left(\hat{\theta}_{n}^{\lambda}-\tilde{\theta}_{n}^{\lambda}\right)\right\Vert ^{2}\right) = o_{\mu_n}(1). 
$$
We consider the case of fixed $\lambda$; taking the supremum over $\lambda$ is trivial when $\Lambda$ is finite. We will also assume for notational simplicity that $A=I$, so that $h=\theta$; the general case (where $h = A^{-1}\cdot\theta$) follows immediately.

Let now
$$
D_{n}^{i} := \left(\hat{\theta}_{n}^{\lambda,-i}-\tilde{\theta}_{n}^{\lambda,-i}\right)-\left(\hat{\theta}_{n}^{\lambda}-\tilde{\theta}_{n}^{\lambda}\right).
$$
Denote $\eta = sign(\tilde{\theta}_{n}^{\lambda})$, and the set of active coordinates as $J=\{j: \eta_{j}\neq 0\}$. Define the following three event indicators, where the $sign$ function is applied component-wise, and $\rho_{n} > 0$ for some deterministic sequence $\rho_{n}$ such that $\rho_{n} \cdot n^{1/4} \to \infty$ and $\rho_{n}\to 0$:
$$
\begin{aligned}
A_{n} &= \mathbf{1}
\left(sign(\widehat{\theta}_{n}^{\lambda,-i})=
sign(\tilde{\theta}_{n}^{\lambda,-i}) = 
sign(\widehat{\theta}_{n}^{\lambda}) =
\eta \text{ for all }i\right),\\
B_{n} &= (1-A_{n})\cdot\mathbf{1}\Big(sign(\widehat{\theta}_{n}^{\lambda}) \neq \eta\text{ or }  sign(\tilde{\theta}_{n} +t + g^{\lambda}(\tilde{\theta}_{n}+t)) \neq \eta
 \text{ for some } \lVert t \rVert <\rho_{n} \Big)\\
C_{n} &= 1 - A_{n} - B_{n}. 
\end{aligned}
$$
Then, by construction, $A_{n} + B_{n} + C_{n} = 1$ for all $n,i$, so that
$$
DD_{n} = \underbrace{ A_{n}\cdot\sum_{i=1}^{n} \lVert D_{n}^{i} \rVert^{2}}_{\text{First sum}} + 
\underbrace{ B_{n}\cdot\sum_{i=1}^{n} \lVert D_{n}^{i} \rVert^{2}}_{\text{Second sum}} + 
\underbrace{ C_{n}\cdot\sum_{i=1}^{n} \lVert D_{n}^{i} \rVert^{2}}_{\text{Third sum}}. 
$$
To show that $DD_{n} \to 0$, we will consider each of these three sums separately.\\

\textbf{First sum}
Conditional on $A_{n}=1$, the signs for all estimators of $\theta$ under consideration coincide with $\eta$ . The first order conditions for the active coordinates for each of the estimators can therefore be written as
$$
\begin{aligned}
\nabla_{J} L_{n}(\widehat{\theta}_{n}^{\lambda}) +\lambda \cdot\eta_{J} &=0\\
\nabla_{J} L_{n}(\widehat{\theta}_{n}^{\lambda,-i}) - \nabla_{J} l_{n}(\widehat{\theta}_{n}^{\lambda,-i},Z_i^{n}) +\lambda \cdot\eta_{J}
&=0\\
(\tilde{\theta}_{n}^{\lambda}-\tilde{\theta}_{n})_{J} +\lambda \cdot\eta_{J} &=0\\
(\tilde{\theta}_{n}^{\lambda,-i}-\tilde{\theta}_{n})_{J} - \nabla_{J} l_{n}(\theta_{0},Z_i^{n}) +\lambda \cdot\eta_{J}
&=0.
\end{aligned}
$$
Taking differences of the first two and of the second two equations, we get
$$
\begin{aligned}
\nabla_{J} L_{n}(\widehat{\theta}_{n}^{\lambda}) -
\nabla_{J} L_{n}(\widehat{\theta}_{n}^{\lambda,-i}) &= \nabla_{J} l_{n}(\widehat{\theta}_{n}^{\lambda,-i},Z_i^{n}),\\
(\tilde{\theta}_{n}^{\lambda} - \tilde{\theta}_{n}^{\lambda,-i})_{J} &= \nabla_{J} l_{n}(\theta_{0},Z_i^{n}).
\end{aligned}
$$
The first of these equations can be rewritten as
$$
(\widehat{\theta}_{n}^{\lambda} -\widehat{\theta}_{n}^{\lambda,-i})_{J} = 
(\nabla_{J}^{2} L_{n}(\bar{\theta}_{n}^{i}))^{-1} \cdot
\nabla_{J} l_{n}(\widehat{\theta}_{n}^{\lambda,-i},Z_i^{n})
$$
for some intermediate point $\bar{\theta}^{i}$. Combining, we get $D_{n,J^{c}}^{i} = 0$ (for the inactive coordinates, the double difference vanishes) and
$$
\begin{aligned}
D_{n,J}^{i} &= \left[(\nabla_{J}^{2} L_{n}(\bar{\theta}_{n}^{i}))^{-1} \cdot
\nabla_{J} l_{n}(\widehat{\theta}_{n}^{\lambda,-i},Z_i^{n})\right] - \nabla_{J} l_{n}(\theta_{0},Z_i^{n})\\
&= \left[ (\nabla_{J}^{2} L_{n}(\bar{\theta}_{n}^{i}))^{-1} - I_{J}\right] \cdot
\nabla_{J} l_{n}(\widehat{\theta}_{n}^{\lambda,-i},Z_i^{n})
+ \left[ \nabla_{J} l_{n}(\widehat{\theta}_{n}^{\lambda,-i},Z_i^{n}) -  \nabla_{J} l_{n}(\theta_{0},Z_i^{n})\right] .
\end{aligned}
$$
and thus, conditional on $A_{n}=1$,
$$
\begin{aligned}
\sum_{i=1}^{n} \lVert D_{n}^{i} \rVert^{2} &= 
\sum_{i=1}^{n} \lVert D_{n,J}^{i} \rVert^{2} \\
&\leq 2 
\max_{i} \left\lVert   (\nabla_{J}^{2} L_{n}(\bar{\theta}_{n}^{i}))^{-1} - I_{J}\right\rVert^{2}\cdot 
\sum_i \left\lVert \nabla l_{n}(\widehat{\theta}_{n}^{\lambda,-i},Z_i^{n})\right\rVert^{2}\\
&+ 2 \sum_i \left \lVert \nabla l_{n}(\widehat{\theta}_{n}^{\lambda,-i},Z_i^{n}) -  \nabla l_{n}(\theta_{0},Z_i^{n})\right \rVert^{2}.
\end{aligned}
$$
Note that we dropped the $J$ subscript on gradients when taking the upper bound.
The $\max$ term goes to $0$ in probability by Lemma \ref{lem:ifrep}. The first sum is $O_{p}(1)$ given the bound on $4$th moments of the score in Assumption~\ref{as:loss_fn_bounds}.2. The second sum is $o_{p}(1)$ by the Lipschitz condition in Assumption~\ref{as:loss_fn_bounds}.2 and by $\widehat{\theta}_{n}^{\lambda,-i} - \theta_{0} = O_{p}\left( \frac{1}{\sqrt{ n }} \right)$.
It follows that $A_{n}\cdot\sum_{i=1}^{n} \lVert D_{n}^{i} \rVert^{2} = o_{p}(1)$.\\

\textbf{Second sum}
To control the second sum, we next show that $B_{n} = o_{\mu_{n}}(1)$.

Recall the following results that were shown previously:
\begin{itemize}
	\item By Lemma~\ref{lem:ifrep}, $\widehat{\theta}_{n}^{\lambda} - \tilde{\theta}_{n}^{\lambda} \to^{p} 0$.
	\item Also by Lemma~\ref{lem:ifrep}, $\sup_{\theta: \| \theta \| \le C} \nabla \epsilon_n(\theta) = o_{\mu_n}(1)$.
	\item By Corollary~\ref{cor:asymptotic_penalizederm}, $\widehat{\theta}_{n}^{\lambda} \to^{d} \widehat{\theta} + g^{\lambda}(\widehat{\theta})$, where $\widehat{\theta}\sim N(\theta_{0}, \Sigma)$.
	\item By Lemma~\ref{lem:locallinear} below, for all $\delta > 0$ there exists a $\gamma>0$ such that $g^\lambda(\theta)$ is linear on $S_\gamma(\hat \theta) = \{\theta:\; \|\theta - \hat\theta\| < \gamma\}$ with probability greater than $1-\delta$, where $\hat \theta \sim N(\theta_0, \Sigma)$.
\end{itemize}

We claim that the combination of these results implies that $B_{n}\to^{p}0$ as long as $\rho_{n} \to 0$.
To see this, define $\Theta_\gamma = \{\theta : g^\lambda \text{ is linear (affine) on } S_\gamma(\theta)\}$, for $\gamma >0$.
By Lemma~\ref{lem:locallinear}, $P(\widehat{\theta} \in \Theta_{\gamma}) > 1-\delta$ for $\gamma$ small enough. By Corollary~\ref{cor:asymptotic_penalizederm}, and the definition of convergence in distribution, we therefore get $P(\widehat{\theta}_{n} \in \Theta_{\gamma}) >1-\delta$ for $n$ large enough.

By Lemma~\ref{lem:ifrep}, $\lVert \widehat{\theta}_{n}^{\lambda} - \tilde{\theta}_{n}^{\lambda} \rVert <\gamma$ with probability greater than $1-\delta$ for $n$ large enough. This implies that $sign(\widehat{\theta}^{\lambda}_{n})_{J} = \eta_{J}$, because $|\tilde\theta_{n,j}^\lambda|$ is bounded away from $0$ for $j \in J$, by definition of $\Theta_\gamma$. This takes care of the active coordinates.

Let now $j \in J^{c}$ be one of the inactive coordinates. If $\tilde{\theta}_{n} \in \Theta_{\gamma}$, then necessarily $\lvert \tilde{\theta}_{j,n}\rvert + \gamma < \lambda$, by definition of $\Theta_{\gamma}$, since the mapping from $\tilde{\theta}_{j,n}$ to $\tilde{\theta}_{j,n}^\lambda$ has a kink at $\pm \lambda$.

Let $\theta,\theta'$ be equal in all coordinates except $j$, where $\theta_{j} = t$ and $\theta'_{j}=0$. Then, by Lemma~\ref{lem:ifrep}, by $\lvert\tilde{\theta}_{j,n}\rvert + \gamma < \lambda$, and by the Lipschitz continuity of $\epsilon(\theta)$ with constant $\gamma_{n}<\gamma$, for $n$ large enough, which follows again from Lemma~\ref{lem:ifrep}, 
$$
\begin{aligned}
&(L_{n}(\theta) + \lambda\lVert \theta \rVert_{1}) -
(L_{n}(\theta') + \lambda\lVert \theta' \rVert_{1}) \\
= &\tfrac{1}{2}(t-\tilde{\theta}_{j,n})^{2} + \epsilon(\theta) +\lambda \lvert t \rvert -\tfrac{1}{2}\tilde{\theta}_{j,n}^{2} - \epsilon(\theta')\\
\geq & \tfrac{1}{2} t^{2} + \lvert t \rvert \cdot \left(- (\lambda-\gamma) + \lambda -\gamma_{n}  \right) \\
= &\tfrac{1}{2} t^{2} + \lvert t \rvert \cdot \left(\gamma -\gamma_{n}  \right)\\
\geq &0,
\end{aligned}
$$
with equality only for $t=0$. It follows that the minimizer of $L_{n}(\theta) + \lambda\lVert \theta \rVert_{1}$ necessarily has $j$th component equal to zero. This takes care of the inactive coordinates.\\

\textbf{Third sum}
To control the third sum, we show that $C_{n} \to^{p} 0$. Since $\sum_{i=1}^{n}\lVert D_{n}^{i}\rVert^{2} = O_{p}(1)$ (which follows from $\lVert D_{n}^{i}\rVert \le \lVert \hat{\theta}_{n}^{\lambda,-i}-\hat{\theta}_{n}^{\lambda}\rVert + \lVert \tilde{\theta}_{n}^{\lambda,-i}-\tilde{\theta}_{n}^{\lambda}\rVert$ and the bounds (\ref{eq:requirement_1})--(\ref{eq:requirement_3}), by the same argument as for (\ref{eq:pf:bound_on_sum})), this implies $C_{n}\cdot\sum_{i}\lVert D_{n}^{i}\rVert^{2} = o_{p}(1)$.

Recall that $C_{n}=1$ requires: (a) $sign(\widehat{\theta}_{n}^{\lambda})=\eta$ and $g^{\lambda}$ is linear on $B_{\rho_{n}}(\tilde{\theta}_{n})$ (the negation of $B_n$'s condition), but (b) there exists some $i$ such that $sign(\widehat{\theta}_{n}^{\lambda,-i})\neq\eta$ or $sign(\tilde{\theta}_{n}^{\lambda,-i})\neq\eta$.
We show that on the event described in (a), neither type of sign disagreement occurs, with probability approaching~1.

\medskip
\textit{Preliminary: rate for the influence-function approximation error.}
We claim
\begin{align}
\sup_{\lambda}\lVert \widehat{\theta}_{n}^{\lambda}-\tilde{\theta}_{n}^{\lambda}\rVert = O_{\mu_{n}}(n^{-1/2}) = o_{\mu_n}(\rho_n), \label{eq:pf:if_rate}
\end{align}
where the second equality uses $\rho_{n}n^{1/2}=(\rho_{n}n^{1/4})\cdot n^{1/4}\to\infty$.
By Lemma~\ref{lem:ifrep}, $\widehat{\theta}_{n}^{\lambda}$ and $\tilde{\theta}_{n}^{\lambda}$ are respectively the minimizers of $\tfrac{1}{2}\lVert\theta-\tilde{\theta}_{n}\rVert^{2}+\epsilon_{n}(\theta)+\lambda\pi(\theta)$ and $\tfrac{1}{2}\lVert\theta-\tilde{\theta}_{n}\rVert^{2}+\lambda\pi(\theta)$.
By Lemma~1 in \cite{wilsonkasymackey2018} applied to the perturbation $\epsilon_{n}$,
\begin{equation}
\sup_{\lambda}\lVert \widehat{\theta}_{n}^{\lambda}-\tilde{\theta}_{n}^{\lambda}\rVert \;\le\; \frac{1}{\mu}\sup_{\lVert\theta\rVert\le C}\lVert\nabla\epsilon_{n}(\theta)\rVert. \label{eq:pf:wkm_eps}
\end{equation}
Since $\nabla L_{n}(\theta_{0})=\theta_{0}-\tilde{\theta}_{n}$ exactly (by the definition $\tilde{\theta}_{n}=\theta_{0}-\sum_{i}\nabla_{\theta}l_{n}(\theta_{0},Z_{n}^{i})$ in Lemma~\ref{lem:ifrep}) and $\nabla\epsilon_{n}(\theta)=\nabla L_{n}(\theta)-(\theta-\tilde{\theta}_{n})$, we have $\nabla\epsilon_{n}(\theta_{0})=0$ exactly. Therefore, for $\lVert\theta\rVert\le C$,
\[
\lVert\nabla\epsilon_{n}(\theta)\rVert \;=\; \lVert\nabla\epsilon_{n}(\theta)-\nabla\epsilon_{n}(\theta_{0})\rVert \;\le\; C\cdot\sup_{\lVert\theta'\rVert\le C}\lVert\nabla^{2}\epsilon_{n}(\theta')\rVert.
\]
Now $\nabla^{2}\epsilon_{n}(\theta)=\nabla^{2}L_{n}(\theta)-I$. The CLT applied to $n\nabla_{\theta}^{2}l_{n}(\theta_{0},Z_{n}^{i})$ (finite second moments from Assumption~\ref{as:loss_fn_bounds}(iii)) gives $\lVert\nabla^{2}L_{n}(\theta_{0})-I\rVert=O_{\mu_{n}}(n^{-1/2})$, using that $\lVert E_{\mu_{n}}[n\nabla_{\theta}^{2}l_{n}(\theta_{0},Z_{n}^{i})]-I\rVert=O(n^{-1/2})$ by Assumption~\ref{as:smoothloss} and the Lipschitz condition in Assumption~\ref{as:loss_fn_bounds}(iii). The same Lipschitz condition and the functional CLT extend this rate uniformly over bounded neighborhoods:
\begin{align}
\sup_{\lVert\theta\rVert\le C}\lVert\nabla\epsilon_{n}(\theta)\rVert \;=\; O_{\mu_{n}}(n^{-1/2}) \;=\; o_{\mu_n}(\rho_n). \label{eq:pf:grad_eps_rate}
\end{align}
Substituting into (\ref{eq:pf:wkm_eps}) establishes (\ref{eq:pf:if_rate}).

\medskip
\textit{Sign agreement for $\tilde{\theta}_{n}^{\lambda,-i}$.}
Since $\tilde{\theta}_{n}^{-i}-\tilde{\theta}_{n}=-\frac{1}{\sqrt{n}}X_{n}^{i}$ where $X_{n}^{i}=-\nabla_{\beta}l(\theta_{0}/\sqrt{n},Z_{n}^{i})$,
the event $\lVert \tilde{\theta}_{n}^{-i}-\tilde{\theta}_{n}\rVert >\rho_{n}$ for some $i$ has probability bounded by
$$
\sum_{i=1}^{n}P\!\left(\frac{1}{\sqrt{n}}\lVert X_{n}^{i}\rVert >\rho_{n}\right)
\le n\cdot\frac{E_{\mu_{n}}[\lVert X_{n}^{i}\rVert^{4}]}{(\rho_{n}\sqrt{n})^{4}}
=\frac{M}{(\rho_{n}n^{1/4})^{4}}\to 0,
$$
by the 4th-moment bound in Assumption~\ref{as:loss_fn_bounds}(ii) and $\rho_{n}\cdot n^{1/4}\to\infty$.
On the complement of this event, $\tilde{\theta}_{n}^{-i}\in B_{\rho_{n}}(\tilde{\theta}_{n})$ for every $i$, so  $g^{\lambda}$ is linear on a neighborhood of each $\tilde{\theta}_{n}^{-i}$, and therefore $sign(\tilde{\theta}_{n}^{\lambda,-i})=\eta$ for all~$i$.

\medskip
\textit{Sign agreement for $\widehat{\theta}_{n}^{\lambda,-i}$: active coordinates.}
On the event $C_{n}=1$, $g^{\lambda}$ is linear on $B_{\rho_{n}}(\tilde{\theta}_{n})$. For $j\in J$, this forces $\lvert\tilde{\theta}_{n,j}\rvert>\lambda+\rho_{n}$, hence $\lvert\tilde{\theta}_{n,j}^{\lambda}\rvert=\lvert\tilde{\theta}_{n,j}\rvert-\lambda>\rho_{n}$.
By (\ref{eq:pf:if_rate}), $\sup_{\lambda}\lVert\widehat{\theta}_{n}^{\lambda}-\tilde{\theta}_{n}^{\lambda}\rVert=o_{\mu_{n}}(\rho_{n})$, so with probability approaching~1,
\begin{equation}
\lvert\widehat{\theta}_{n,j}^{\lambda}\rvert \;\ge\; \rho_{n}/2 \qquad\text{for all }j\in J. \label{eq:pf:active_lb}
\end{equation}
It remains to show $\lvert\widehat{\theta}_{n,j}^{\lambda,-i}-\widehat{\theta}_{n,j}^{\lambda}\rvert<\rho_{n}/4$ for all $j\in J$ and all $i$ simultaneously.
By (\ref{eq:requirement_1}), $\sup_{\lambda}\lVert\widehat{\theta}_{n}^{\lambda,-i}-\widehat{\theta}_{n}^{\lambda}\rVert\le\frac{1}{\mu}\lVert\nabla_{\theta}l_{n}(\widehat{\theta}_{n},Z_{n}^{i})\rVert$.
The Lipschitz condition in Assumption~\ref{as:loss_fn_bounds}(ii) gives
\[
\lVert\nabla_{\theta}l_{n}(\widehat{\theta}_{n},Z_{n}^{i})\rVert
\;\le\;\frac{1}{\sqrt{n}}\lVert X_{n}^{i}\rVert + \frac{B_{n}(Z_{n}^{i})}{\sqrt{n}}\lVert\widehat{\theta}_{n}-\theta_{0}\rVert.
\]
A Chebyshev union bound on the first term (4th-moment bound in Assumption~\ref{as:loss_fn_bounds}(ii)) and a Chebyshev union bound on the second term (2nd-moment bound $E_{\mu_{n}}[B_{n}^{2}]<\infty$ in Assumption~\ref{as:loss_fn_bounds}(ii), together with $\widehat{\theta}_{n}-\theta_{0}=O_{\mu_{n}}(1)$) give
\begin{equation}
P\!\left(\exists\,i:\;\sup_{\lambda}\lVert\widehat{\theta}_{n}^{\lambda,-i}-\widehat{\theta}_{n}^{\lambda}\rVert>\frac{\rho_{n}}{4}\right)
\;\le\; \frac{C_{1}M}{(\rho_{n}n^{1/4})^{4}} + \frac{C_{2}E_{\mu_{n}}[B_{n}^{2}]}{n\rho_{n}^{2}}\to 0, \label{eq:pf:union_active}
\end{equation}
for absolute constants $C_{1},C_{2}$, using $(\rho_{n}n^{1/4})^{4}\to\infty$ and $n\rho_{n}^{2}\ge(\rho_{n}n^{1/4})^{2}\cdot n^{1/2}\to\infty$.
On the complement of (\ref{eq:pf:union_active}) and given (\ref{eq:pf:active_lb}), we have $\lvert\widehat{\theta}_{n,j}^{\lambda,-i}\rvert\ge\rho_{n}/4>0$ with sign~$\eta_{j}$, for all $j\in J$ and all~$i$.

\medskip
\textit{Sign agreement for $\widehat{\theta}_{n}^{\lambda,-i}$: inactive coordinates.}
It remains to show that $\widehat{\theta}_{n,j}^{\lambda,-i}=0$ for $j\in J^{c}$ and all~$i$, with probability approaching~1.
The argument parallels the one used for the second sum (showing $sign(\widehat{\theta}_{n}^{\lambda})=\eta$), now applied to $L_{n}^{-i}$ and $\widehat{\theta}_{n}^{\lambda,-i}$.

By Lemma~\ref{lem:ifrep},
$$
L_{n}^{-i}(\theta) = \tfrac{1}{2}\lVert \theta-\tilde{\theta}_{n}^{-i}\rVert^{2} + \epsilon_{n}^{(i)}(\theta) + c_n^{(i)},
$$
where
\begin{align*}
	\epsilon_{n}^{(i)}(\theta) &= \epsilon_{n}(\theta) - \left[l_{n}(\theta,Z_{n}^{i}) - l_{n}(\theta_{0},Z_{n}^{i}) - \langle\nabla_{\theta}l_{n}(\theta_{0},Z_{n}^{i}),\,\theta-\theta_{0}\rangle\right.\\
	& + \left.\tfrac{1}{2n}(\theta-\theta_{0})^{\intercal}\left\{n\nabla_{\theta}^{2}l_{n}(\theta_{0},Z_{n}^{i})\right\}(\theta-\theta_{0})\right].
\end{align*}
Consider a candidate minimizer $\theta$ of $L_{n}^{-i}(\theta)+\lambda\lVert\theta\rVert_{1}$ with $\theta_{j}=t\neq 0$ for some $j\in J^{c}$, and let $\theta^{\prime}$ agree with $\theta$ except $\theta_{j}^{\prime}=0$. On the event that $g^{\lambda}$ is linear on $B_{\rho_{n}}(\tilde{\theta}_{n})$, the KKT conditions imply $\lvert\tilde{\theta}_{n,j}\rvert +\rho_{n}<\lambda$ for $j\in J^{c}$. Therefore, by the same calculation as in the second sum,
$$
\left(L_{n}^{-i}(\theta)+\lambda\lvert t\rvert\right)-L_{n}^{-i}(\theta^{\prime})
\ge \tfrac{1}{2}t^{2}+\lvert t\rvert\!\left(\lambda-\lvert\tilde{\theta}_{n,j}^{-i}\rvert - \sup_{\lVert\theta\rVert\le C}\lVert\nabla\epsilon_{n}^{(i)}(\theta)\rVert\right).
$$
Since $\lvert\tilde{\theta}_{n,j}^{-i}\rvert\le\lvert\tilde{\theta}_{n,j}\rvert+n^{-1/2}\lVert X_{n}^{i}\rVert$ and the slack is $\lambda-\lvert\tilde{\theta}_{n,j}\rvert>\rho_{n}$ on the event under consideration, it suffices to show simultaneously for all~$i$:
\begin{equation}
n^{-1/2}\lVert X_{n}^{i}\rVert \;+\; \sup_{\lVert\theta\rVert\le C}\lVert\nabla\epsilon_{n}^{(i)}(\theta)\rVert \;<\; \rho_{n}. \label{eq:pf:slack_condition}
\end{equation}
Write $\nabla\epsilon_{n}^{(i)}(\theta)=\nabla\epsilon_{n}(\theta)-R_{3}(\theta,Z_{n}^{i})$, where
\[
R_{3}(\theta,Z_{n}^{i}):=\nabla_{\theta}l_{n}(\theta,Z_{n}^{i})-\nabla_{\theta}l_{n}(\theta_{0},Z_{n}^{i})-\tfrac{1}{n}\bigl\{n\nabla_{\theta}^{2}l_{n}(\theta_{0},Z_{n}^{i})\bigr\}(\theta-\theta_{0})
\]
is the second-order Taylor remainder of $\nabla_{\theta}l_{n}(\cdot,Z_{n}^{i})$ around $\theta_{0}$.
We bound the three contributions to (\ref{eq:pf:slack_condition}) in turn.

\smallskip\noindent
\textit{Control of $\nabla\epsilon_{n}$.}
By (\ref{eq:pf:grad_eps_rate}), $\sup_{\lVert\theta\rVert\le C}\lVert\nabla\epsilon_{n}(\theta)\rVert=O_{\mu_{n}}(n^{-1/2})=o_{\mu_{n}}(\rho_{n})$; this term is the same for all~$i$.

\smallskip\noindent
\textit{Simultaneous control of $R_{3}(\cdot,Z_{n}^{i})$.}
By the Lipschitz condition in Assumption~\ref{as:loss_fn_bounds}(iii),
$\lVert R_{3}(\theta,Z_{n}^{i})\rVert\le\frac{C_{n}(Z_{n}^{i})}{2n}\lVert\theta-\theta_{0}\rVert^{2}$,
so $\sup_{\lVert\theta\rVert\le C}\lVert R_{3}(\theta,Z_{n}^{i})\rVert\le\frac{C_{n}(Z_{n}^{i})C^{2}}{2n}$.
A Chebyshev union bound gives
\begin{equation}
P\!\left(\exists\,i:\;\sup_{\lVert\theta\rVert\le C}\lVert R_{3}(\theta,Z_{n}^{i})\rVert>\frac{\rho_{n}}{3}\right)
\;\le\;n\cdot P\!\left(C_{n}(Z_{n}^{i})>\frac{2n\rho_{n}}{3C^{2}}\right)
\;\le\;\frac{9C^{4}\sup_{n}E_{\mu_{n}}[C_{n}(Z_{n}^{i})^{2}]}{4\,n\rho_{n}^{2}}\to 0,
\label{eq:pf:union_R3}
\end{equation}
since $n\rho_{n}^{2}\ge(\rho_{n}n^{1/4})^{2}\cdot n^{1/2}\to\infty$ and $\sup_{n}E_{\mu_{n}}[C_{n}(Z_{n}^{i})^{2}]<\infty$ by Assumption~\ref{as:loss_fn_bounds}(iii).

\smallskip\noindent
\textit{Simultaneous control of $n^{-1/2}\lVert X_{n}^{i}\rVert$.}
By the same Chebyshev argument as for the first sign-agreement step,
\begin{equation}
P\!\left(\exists\, i:\; n^{-1/2}\lVert X_{n}^{i}\rVert>\frac{\rho_{n}}{3}\right)
\;\le\; \frac{81\,M}{(\rho_{n}n^{1/4})^{4}}\to 0. \label{eq:pf:union_score_inactive}
\end{equation}

On the complement of the three events above, (\ref{eq:pf:slack_condition}) holds for all~$i$: $n^{-1/2}\|X_n^i\| + \sup_{\|\theta\|\le C}\|\nabla\epsilon_n^{(i)}(\theta)\| \le \rho_n/3 + o_{\mu_n}(\rho_n) + \rho_n/3 < \rho_n$ for $n$ large enough. The slack is therefore strictly positive, so $\widehat{\theta}_{n,j}^{\lambda,-i}=0$ for all $j\in J^{c}$ and all~$i$.

\medskip
\textit{Conclusion.}
Combining the three cases above, we conclude that on the event described in condition~(a) (which has probability approaching~1 given $B_{n}\to^{p}0$), all signs agree: $sign(\widehat{\theta}_{n}^{\lambda,-i})=sign(\tilde{\theta}_{n}^{\lambda,-i})=\eta$ for every~$i$. This contradicts condition~(b), so $C_{n}\to^{p}0$. Since $\sum_{i}\lVert D_{n}^{i}\rVert^{2}=O_{p}(1)$, we obtain $C_{n}\cdot\sum_{i}\lVert D_{n}^{i}\rVert^{2}=o_{p}(1)$.

\medskip
Combining the bounds for all three sums, we conclude $DD_{n}=o_{\mu_{n}}(1)$, which establishes the claim deferred from Step~2.

\medskip
\textit{Conclusion of the proof.}
It remains to assemble the pieces. By Steps~1--3, uniformly over $\lambda\in\Lambda$,
\[
CV_{n}(\lambda)=\sum_{i=1}^{n}l_{n}(\hat{\theta}_{n}^{\lambda},Z_{n}^{i})+\textrm{Tr}\left[\nabla g^{\lambda}(\tilde{\theta}_{n})\cdot\Sigma\right]+c_{n}'+o_{\mu_{n}}(1),
\]
where $c_{n}'$ collects the $\lambda$-independent terms isolated in Step~3. For the
in-sample term, Lemma~\ref{lem:ifrep}---applied at $\theta=\hat{\theta}_{n}^{\lambda}$,
using $\sup_{\lambda}\|\hat{\theta}_{n}^{\lambda}-\tilde{\theta}_{n}^{\lambda}\|=o_{\mu_{n}}(1)$
with $\tilde{\theta}_{n}^{\lambda}=\tilde{\theta}_{n}+g^{\lambda}(\tilde{\theta}_{n})$,
and the Lipschitz property of $g^{\lambda}$ (Lemma~\ref{lem:lipschitz})---gives
\[
\sum_{i=1}^{n}l_{n}(\hat{\theta}_{n}^{\lambda},Z_{n}^{i})=L_{n}(\hat{\theta}_{n}^{\lambda})=c_{n}''+\tfrac12\|g^{\lambda}(\hat{\theta}_{n})\|^{2}+o_{\mu_{n}}(1),
\]
uniformly over $\lambda$, where $c_{n}''=L_{n}(\theta_{0})-\tfrac12\|\tilde{\theta}_{n}-\theta_{0}\|^{2}$
is $\lambda$-independent. Since $\hat{\Sigma}_{n}\to^{\mu_{n}}\Sigma$ and
$\nabla g^{\lambda}(\tilde{\theta}_{n})=\nabla g^{\lambda}(\hat{\theta}_{n})+o_{\mu_{n}}(1)$
(from $\hat{\theta}_{n}=\tilde{\theta}_{n}+o_{\mu_{n}}(1)$ and Lemma~\ref{lem:locallinear}),
collecting $c_{n}'$, $c_{n}''$, and the $\lambda$-independent $\tfrac12\,\textrm{trace}(\Sigma)$
into a single constant $c_{n}$ yields
\[
CV_{n}(\lambda)=c_{n}+\tfrac12\|g^{\lambda}(\hat{\theta}_{n})\|^{2}+\textrm{Tr}\left[\nabla g^{\lambda}(\hat{\theta}_{n})\cdot\Sigma\right]+o_{\mu_{n}}(1)=c_{n}+SURE(\lambda,\hat{\theta}_{n},\Sigma)+o_{\mu_{n}}(1),
\]
uniformly over $\lambda\in\Lambda$, where the last equality uses the definition
$SURE(\lambda,\hat{\theta},\Sigma)=\tfrac12[\textrm{trace}(\Sigma)+\|g^{\lambda}(\hat{\theta})\|^{2}+2\,\textrm{Tr}(\nabla g^{\lambda}(\hat{\theta})\cdot\Sigma)]$.
This completes the proof of Lemma~\ref{lem:convergence_cv}. \hfill$\qed$

\section{Characterizing SURE for Ridge}
\label{sec:ridge}

To prove Lemma~\ref{lem:convergencetuned} for the Ridge penalty $\tfrac12 \theta  \cdot A^{-1}  \cdot \theta$, we start by deriving a series of properties of the function $SURE(\lambda,\hat \theta, \Sigma) =  \tfrac12\left[\trace(\Sigma) + \|g^\lambda\|^2 + 2  \trace \left ( \nabla g^\lambda\cdot \Sigma\right )\right]$.
The properties derived in this section are purely analytic, not probabilistic, and concern the behavior of $SURE$ as a function, not any asymptotic limits.
Recall that $g^\lambda(\theta) = \argmin_g \tfrac12\|g\|^2 + \lambda  \cdot \pi(\theta + g)$,
and that $g^\lambda$ satisfies the first-order condition,
$$
  g^\lambda(\theta) = - \lambda  \cdot \nabla \pi(\theta + g^\lambda(\theta)),
$$
for a suitable sub-gradient $\nabla \pi$ of $\pi$.
Ridge corresponds to penalties of the form $\pi(\theta) = \tfrac12 \theta  \cdot A^{-1}  \cdot \theta$, where $A$ is positive definite.
Denote $C_\lambda = -(\tfrac1{\lambda} A + I)^{-1}$.
The first order condition for $g^\lambda(\theta)$ then implies
\bals
  g^\lambda(\theta) &= C_\lambda \cdot  \theta,&
  \nabla g^\lambda(\theta) &= C_\lambda.
\eals
and thus
\bals
SURE(\lambda, R, \nu) &= \tfrac12\left[\trace(\Sigma) +
  \|C_\lambda \cdot  \theta\|^2 +
2  \trace \left ( C_\lambda \cdot \Sigma\right )\right].
\eals

The following change of coordinates will be convenient for some of our arguments.
Denote $R_n = \|\hat \theta_n\|$ and $\nu_n = \hat \theta_n / R_n$, and similarly for $R$ and $\nu$.
In a slight abuse of notation, we shall write
$$
SURE(\lambda, R, \nu) = SURE(\lambda,\hat \theta, \Sigma) .
$$

The following lemma characterizes the behavior of $SURE$ for Ridge. Property 1 and supermodularity are derived directly from the expression for $SURE$. Properties 2a, 2b, and 3 are then consequences of supermodularity.
\begin{lemma}[Properties of $SURE$ for Ridge]$\;$
  \label{lem:sureridge}
  Suppose that $\pi(\theta) = \tfrac12 \theta  \cdot A^{-1}  \cdot \theta$, where $A$ is positive definite.
  Then the following holds:
  \begin{enumerate}
    \item For every point $\theta$, the function $SURE(\lambda, \theta)$ satisfies  
    $$
      \sup_{\lambda \in \Lambda} |SURE(\lambda, \theta') - SURE(\lambda, \theta)| \rightarrow 0
    $$
    as $\theta' \rightarrow \theta$.
    \item The function $SURE(\lambda, R, \nu)$ is strictly supermodular in $\lambda$ and $R$.
    This implies:
    \begin{enumerate}
      \item $\lambda(R, \nu) = \argmin_{\lambda \in \mathbb R^+} SURE(\lambda, R, \nu)$ is monotonically decreasing in $R$, given $\nu$.
      \item $\lambda(R, \nu)$ has at most countably many discontinuities, as a function of $R$, given $\nu$.
    \end{enumerate}
    \item Fix $\nu$ and $R$ such that $\lambda( \cdot )$ is continuous in $R$ at $(R,\nu)$, and let $\bar \lambda = \lambda(R, \nu)$. 
    Then supermodularity implies that the minimum of $SURE$ is well separated: For any $\epsilon>0$,
    $$
      \inf_{\lambda \in \mathbb R^+ \backslash [\bar \lambda - \epsilon,\bar \lambda + \epsilon]}  SURE(\lambda, R, \nu) -  SURE(\bar \lambda, R, \nu) >0.
    $$
  \end{enumerate}
\end{lemma}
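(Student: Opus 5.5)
The plan is to reduce everything to the explicit Ridge formula. Diagonalize $A$ in an orthonormal basis, $A = U\diag(a_1,\dots,a_p)U^\intercal$ with $a_j>0$. Then $C_\lambda = -U\diag\bigl(\lambda/(a_j+\lambda)\bigr)U^\intercal$, so $\|C_\lambda\|\le 1$ and each eigenvalue magnitude $\lambda/(a_j+\lambda)$ is strictly increasing in $\lambda$. In the coordinates $(R,\nu)$,
$$
SURE(\lambda,R,\nu)=\tfrac12\left[\trace(\Sigma)+R^2\|C_\lambda\nu\|^2+2\trace(C_\lambda\Sigma)\right].
$$
To obtain compactness, we work on $\Lambda=\mathbb R^+\cup\{\infty\}$ through the coordinate $s=\lambda/(1+\lambda)\in[0,1]$. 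Every entry of $C_\lambda$ extends continuously to $s=1$, where $C_\infty=-I$. Hence $SURE$ is jointly continuous in $(s,\theta)$ on the compact set $[0,1]\times\{\|\theta\|\le K\}$, and minimizers over $\Lambda$ exist.

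\emph{Property 1.} Only the middle term depends on $\theta$. Using $\|C_\lambda\|\le1$,
$$
\sup_\lambda\bigl|SURE(\lambda,\theta')-SURE(\lambda,\theta)\bigr|=\tfrac12\sup_\lambda\bigl|\|C_\lambda\theta'\|^2-\|C_\lambda\theta\|^2\bigr|\le\tfrac12\|\theta'-\theta\|\,(\|\theta'\|+\|\theta\|),
$$
and the right-hand side tends to zero as $\theta'\to\theta$.

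\emph{Property 2.} Fix $\lambda<\lambda'$. Then
$$
SURE(\lambda',R,\nu)-SURE(\lambda,R,\nu)=\tfrac{R^2}{2}\bigl(\|C_{\lambda'}\nu\|^2-\|C_\lambda\nu\|^2\bigr)+\trace\bigl((C_{\lambda'}-C_\lambda)\Sigma\bigr).
$$
Because $\|\nu\|=1$ and each factor $\lambda/(a_j+\lambda)$ is strictly increasing, the coefficient $\|C_{\lambda'}\nu\|^2-\|C_\lambda\nu\|^2$ is strictly positive. So the difference is strictly increasing in $R\ge0$; this is strict increasing differences, i.e.\ strict supermodularity. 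For a minimization problem this forces every selection from the argmin to be decreasing in $R$. Concretely, take $R<R'$ with minimizers $\lambda$ at $R$ and $\lambda'$ at $R'$, and suppose $\lambda'>\lambda$. Adding the two optimality inequalities contradicts strict increasing differences, by the standard Topkis/Milgrom--Shannon argument. This gives 2(a). Part 2(b) then holds because a monotone function on $\mathbb R^+$ (valued in $[0,1]$ via $s$) has at most countably many jump discontinuities.

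\emph{Property 3.} This is the step that needs care, and I would argue by contradiction using compactness. Suppose that for some $\epsilon>0$ the infimum over $\Lambda\setminus[\bar\lambda-\epsilon,\bar\lambda+\epsilon]$ equals the minimum. Take a minimizing sequence $\lambda_k$ in that set and pass to a convergent subsequence in the $s$-coordinate, with limit $\tilde\lambda$; it may be $\infty$, and the separation is interpreted in $s$. By continuity, $\tilde\lambda\neq\bar\lambda$ is a second global minimizer at $(R,\nu)$. Then the strict increasing-differences argument gives that every minimizer at $R'<R$ is at least $\max(\bar\lambda,\tilde\lambda)$, and every minimizer at $R'>R$ is at most $\min(\bar\lambda,\tilde\lambda)$. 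Hence the one-sided limits of $\lambda(\cdot,\nu)$ at $R$ bracket both values, so they cannot both equal $\bar\lambda$. This contradicts continuity at $R$.

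The main obstacle I anticipate is bookkeeping rather than substance. First, the case $R=0$ is degenerate: there the middle term vanishes, so one should restrict to $R>0$, which is a null set issue. Second, the point $\lambda=\infty$ and the separation must be measured in the compactified coordinate. Third, one must make clear that ``continuity of $\lambda(\cdot)$'' refers to a fixed selection, which the monotonicity argument controls uniformly for all selections.
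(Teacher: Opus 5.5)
Your proposal is correct. Items 1 and 2 match the paper. You use finite increasing differences where the paper computes the cross-partial $R\,\partial_\lambda\|C_\lambda\nu\|^2>0$, and you write out the Topkis step explicitly.

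Item 3 is where you genuinely differ. The paper argues quantitatively:
\begin{enumerate}
\item Continuity gives $\delta$ with $|\lambda(R',\nu)-\bar\lambda|<\epsilon/2$ for $|R'-R|\le\delta$.
\item It sets $\overline{\Delta}>0$ to be the smaller of the strict double differences of $SURE$ over the rectangles $[\bar\lambda+\epsilon/2,\bar\lambda+\epsilon]\times[R-\delta,R]$ and $[\bar\lambda-\epsilon,\bar\lambda-\epsilon/2]\times[R,R+\delta]$.
\item For any $\tilde\lambda>\bar\lambda+\epsilon$, it compares with the larger rectangle $[\lambda(R-\delta,\nu),\tilde\lambda]\times[R-\delta,R]$. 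Using optimality of $\lambda(R-\delta,\nu)$ at $R-\delta$, it obtains $SURE(\tilde\lambda,R,\nu)\ge SURE(\bar\lambda,R,\nu)+\overline{\Delta}$, uniformly in $\tilde\lambda$; the case $\tilde\lambda<\bar\lambda-\epsilon$ is symmetric.
\end{enumerate}
You instead show that continuity of the selection at $R$ forces the argmin at $R$ to be a singleton, via the monotone bracketing of the one-sided limits. You then deduce separation from uniqueness, compactness of the compactified $\Lambda$, and continuity of $SURE$ in $s$.

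The two routes trade off as follows. The paper's route produces an explicit gap and never needs the infimum over the non-compact set to be attained. Yours is non-constructive but shorter, and it isolates the fact actually used in Lemma~\ref{lem:ascontinuous}: a continuity point of the selection has a unique minimizer.

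Your route also covers a case the paper's argument tacitly excludes. Since $\partial_\lambda C_\lambda=-A(A+\lambda I)^{-2}$, one gets $\partial_\lambda SURE<0$ on all of $\mathbb R^+$ whenever $R^2$ is below the smallest eigenvalue of $\Sigma$. So on an event of positive probability, $\argmin_{\lambda\in\mathbb R^+}$ is empty and the minimizer over $\Lambda$ is $\lambda=\infty$. There, the paper's $\lambda(R\pm\delta,\nu)$ and its $\epsilon$-window in $\lambda$ are not defined, whereas your $s$-coordinate handles this case cleanly.

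Your restriction to $R>0$ is needed only for the left half of the bracketing, and it is harmless for the almost-everywhere use downstream.
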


\begin{proof}[Proof of Lemma~\ref{lem:sureridge} (Properties of $SURE$ for Ridge):]$\;$
  \begin{enumerate}
    \item We have
    \bals
      &2\left[SURE(\lambda, \theta') - SURE(\lambda, \theta)\right]\\
       &=  \|C_\lambda \cdot  \theta'\|^2 -  \|C_\lambda \cdot  \theta\|^2\\
       &\leq \|C_\lambda\|^2  \cdot \|\theta'-\theta\| \cdot \|\theta'+\theta\|\\
       &\leq \|\theta'-\theta\| \cdot \|\theta'+\theta\|,
    \eals
    where in the third line we have used Cauchy-Schwartz ($\lVert \theta' \rVert^{2} - \lVert \theta \rVert^{2} = \langle \theta'-\theta, \theta'+\theta\rangle\leq \lVert \theta'-\theta \rVert \cdot \lVert \theta'+\theta \rVert$), and in the last line we have used that $A$ is positive definite, which implies $\|C_\lambda\| \leq 1$.
    The claim follows.

    \item The first and last terms in the expression for $SURE$ do not depend on $\theta$.
    The middle term can be written as $\tfrac12 R^2 \cdot \|C_\lambda \cdot  \nu\|^2$,
    and thus
    \bals
    \frac{\partial^2}{\partial \lambda \partial R} SURE(\lambda, R, \nu) &= R  \cdot \frac{\partial}{\partial \lambda} \|C_\lambda \cdot  \nu\|^2 >0,
    \eals
    using again the positive definiteness of $A$.
    This implies that $SURE(\lambda, R, \nu)$ is strictly supermodular in $\lambda$ and $R$, i.e., whenever $\epsilon>0$ and $\delta >0$
    \bals
      &\left[SURE(\lambda + \epsilon, R,\nu) -SURE(\lambda, R,\nu)\right]\\
      &\hspace{20pt} - \left[SURE(\lambda + \epsilon, R- \delta, \nu) -SURE(\lambda, R- \delta ,\nu)\right] >0.
    \eals
    \begin{enumerate}
      \item Monotonicity of $\lambda(R, \nu)$ in $R$ follows from supermodularity of $SURE$, by Topkis's theorem.
      
      \item That $\lambda(R, \nu)$ is continuous in $R$ almost everywhere holds because monotonic functions are continuous almost everywhere: The set of discontinuity points is at most countable, by Theorem 4.30 of \cite{rudin1964principles}.
    \end{enumerate}

    \item For the given $R, \nu$, let $\delta$ be such that $|\lambda(R',\nu) - \lambda(R,\nu)| < \epsilon/2$ whenever $|R' - R| \leq \delta$; such a $\delta$ exists by continuity.
    
    Define
    $$A(\lambda_1, \lambda_2, R) = SURE(\lambda_1, R,\nu) -SURE(\lambda_2, R,\nu),$$ 
    let $\bar \lambda = \lambda(R, \nu)$
    and
    \bals
      \overline{\Delta} =  \min \Big(
        A(\bar \lambda + \epsilon, \bar \lambda + \epsilon/2, R) - 
          A(\bar \lambda + \epsilon, \bar \lambda + \epsilon/2, R - \delta)&,\\
        A(\bar \lambda - \epsilon/2, \bar \lambda - \epsilon, R + \delta) - 
          A(\bar \lambda - \epsilon/2, \bar \lambda - \epsilon, R)
        \Big).
    \eals
    By {strict} supermodularity, both of the ``double differences'' in this definition are positive, and thus $\overline{\Delta}>0$.
    The following figure illustrates the definition of $\overline{\Delta}$.
    The differences defining $A$ are taken over vertical segments for different $\lambda$ and fixed $R$. The double differences defining $\Delta$ are taken over of the grey rectangles in the figure:\\
\begin{center}
    \begin{tikzpicture}[scale=1.2] 
      \draw[->] (-1,-1) -- (4,-1) node[right] {$R$};
      \draw[->] (0,-2) -- (0,5) node[above] {$\lambda$};
      
      \draw[dashed, thin, gray] (1,-1) -- (1,5);
      \draw[dashed, thin, gray] (2,-1) -- (2,5);
      \draw[dashed, thin, gray] (3,-1) -- (3,5);
      \draw[dashed, thin, gray] (0,0) -- (4,0);
      \draw[dashed, thin, gray] (0,1) -- (4,1);
      \draw[dashed, thin, gray] (0,2) -- (4,2);
      \draw[dashed, thin, gray] (0,3) -- (4,3);
      \draw[dashed, thin, gray] (0,4) -- (4,4);
      
      \coordinate (R_minus_delta_lambda_plus_eps) at (1,4);
      \coordinate (R_lambda_plus_eps) at (2,4);
      \coordinate (R_minus_delta_lambda_plus_eps_half) at (1,3);
      \coordinate (R_lambda_plus_eps_half) at (2,3);
      \coordinate (R_lambda_minus_eps_half) at (2,1);
      \coordinate (R_plus_delta_lambda_minus_eps_half) at (3,1);
      \coordinate (R_lambda_minus_eps) at (2,0);
      \coordinate (R_plus_delta_lambda_minus_eps) at (3,0);
      
      \fill (R_minus_delta_lambda_plus_eps) circle (2pt);
      \fill (R_lambda_plus_eps) circle (2pt);
      \fill (R_minus_delta_lambda_plus_eps_half) circle (2pt);
      \fill (R_lambda_plus_eps_half) circle (2pt);
      \fill (R_lambda_minus_eps_half) circle (2pt);
      \fill (R_plus_delta_lambda_minus_eps_half) circle (2pt);
      \fill (R_lambda_minus_eps) circle (2pt);
      \fill (R_plus_delta_lambda_minus_eps) circle (2pt);
      
      \node at (2,-1.3) {$R$};
      \node at (1,-1.3) {$R - \delta$};
      \node at (3,-1.3) {$R + \delta$};
      \node at (-0.6,2) {$\bar \lambda$};
      \node at (-0.6,4) {$\bar \lambda + \epsilon$};
      \node at (-0.6,3) {$\bar \lambda + \epsilon/2$};
      \node at (-0.6,1) {$\bar \lambda - \epsilon/2$};
      \node at (-0.6,0) {$\bar \lambda - \epsilon$};
      
      \fill[gray,opacity=0.3] (1,4) rectangle (2,3);
      \fill[gray,opacity=0.3] (2,1) rectangle (3,0);
  \end{tikzpicture}\\
\end{center} 
    
    We claim that 
    $$
    \inf_{\lambda \in \mathbb R^+ \backslash [\bar \lambda - \epsilon,\bar \lambda + \epsilon]}  SURE(\lambda, R, \nu) -  SURE(\bar \lambda, R, \nu) \geq \overline{\Delta}.
    $$
    The following argument will correspond to the ``top left'' rectangle in the figure; the ``bottom right'' case is symmetric.
    Fix $\tilde{\lambda} > \bar \lambda + \epsilon$.

    \begin{itemize}
      \item Given our assumptions and definitions, $\tilde{\lambda} > \bar \lambda + \epsilon > \bar \lambda + \epsilon/2 > \lambda(R - \delta) \geq \bar \lambda$. Therefore, by supermodularity
      \bals
      A(\tilde{\lambda}, \lambda(R - \delta), R) - 
      A(\tilde{\lambda}, \lambda(R - \delta), R- \delta)\\
       \geq A(\bar \lambda + \epsilon, \bar \lambda + \epsilon/2, R)- 
      A(\bar \lambda + \epsilon, \bar \lambda + \epsilon/2, R- \delta).
      \eals
    
      \item By definition of $\overline \Delta$,
      $$
       A(\bar \lambda + \epsilon, \bar \lambda + \epsilon/2, R) - 
       A(\bar \lambda + \epsilon, \bar \lambda + \epsilon/2, R- \delta) \geq 
       \overline \Delta.
      $$
    
      \item By optimality of $\lambda(R - \delta)$ for $R- \delta$,
      $$
      A(\tilde{\lambda}, \lambda(R - \delta), R- \delta) \geq 0.
      $$
    
      \item Combining the preceding three items, we get
      $$
      A(\tilde{\lambda}, \lambda(R - \delta), R) \geq \overline \Delta.
      $$
      and thus
      $$
        SURE(\tilde{\lambda}, R, \nu) \geq SURE(\lambda(R - \delta), R, \nu) + \overline \Delta \geq SURE(\bar \lambda, R, \nu) + \overline \Delta.
      $$
      The argument for $\tilde{\lambda} < \bar \lambda - \epsilon$ is analogous, and the claim follows.
    \end{itemize}
  \end{enumerate}   
\end{proof}

\section{Characterizing SURE for Lasso}
\label{sec:lasso}

Lasso corresponds to penalties of the form 
$\pi(\theta) = \|A^{-1}  \cdot \theta\|_1$, where $A$ is an invertible matrix, and $\| \cdot \|_1$ is the $L_1$ norm.\footnote{For Lasso, it is \textit{not} without loss of generality to assume that $A$ is diagonal.}
Given $\lambda$, we can characterize $g^\lambda(\theta)$ as follows.
Denote $h^\lambda(\theta) = A^{-1}(\theta + g^\lambda(\theta))$.
The optimal $h^\lambda(\theta)$ solves
$$
  h^\lambda(\theta) = \argmin_h \tfrac12 \| A  \cdot h - \theta\|^2 + \lambda  \cdot \|h\|_1.
$$
The solution to this convex optimization problem is of the form
$$
  h^\lambda_J(\theta) = (A_J'A_J)^{-1} \cdot [A_J' \theta - \lambda \eta_J] .
$$
where $\eta_j = sign(h^\lambda_j)$, $J = \{j: \eta_j \neq 0\}$, and $A_J$ is the subset of columns corresponding to the index set $J$.
These conditions follow immediately from the first order conditions for $h^\lambda(\theta)$.

\begin{lemma}[Properties of $SURE$ for Lasso]
  \label{lem:surelasso}
Suppose that $\pi(\theta) = \|A^{-1}  \cdot \theta\|_1$, where $A$ is an invertible matrix, and $\| \cdot \|_1$ is the $L_1$ norm. Let $k= \dim(\theta)$.
Then the following holds:
\begin{enumerate}
  \item As a function of $\lambda$, for every $R,\nu$, the graph of $SURE(\lambda, R, \nu)$ consists of at most $3^k$ continuous segments indexed by $\eta \in \{-1, 0, 1\}^{k}$.
  On each of these segments $\eta$ and $J = \{j: \eta_j \neq 0\}$ are constant,
  \bals
  \nabla g^\lambda &= A_J \cdot (A_J'A_J)^{-1} \cdot A_J' - I,\\
  \|g^\lambda\|^2 &= \|\nabla g^\lambda \cdot \theta\|^2 + \lambda^2  \cdot \eta_J' (A_J'A_J)^{-1} \eta_J,
  \eals
  and
  $SURE(\lambda, R, \nu)$ is a monotonically increasing quadratic polynomial in $\lambda$ of the form
  $$
    SURE(\lambda, R, \nu) = const. + \tfrac12 \lambda^2  \cdot  \eta_J' (A_J'A_J)^{-1} \eta_J.
  $$
  \item $SURE$ for Lasso scales with $R$ as follows:
  $$
  SURE(R \cdot \lambda, R, \nu) =\tfrac12\left[\trace(\Sigma) + R^2  \cdot \|g^{\lambda}(\nu)\|^2 + 2  \trace \left ( \nabla g^{\lambda}(\nu)\cdot \Sigma\right )\right].
  $$
  Given $\nu$, let $\lambda_1,\lambda_2, \ldots, \lambda_m$ ($m\leq 3^k$) be the local minimizers of $SURE(\lambda, 1, \nu)$, corresponding to values of $\lambda$ where $\eta$ changes.
  The local minimizers of $SURE(\lambda, R, \nu)$ are then given by $R \cdot \lambda_1,R \cdot \lambda_2, \ldots, R \cdot \lambda_m$.

  \item Let  $\lambda(R, \nu) = \argmin_{\lambda \in \mathbb R^+} SURE(\lambda, R, \nu)$.
  Then, given $\nu$, $\lambda(R, \nu) = R  \cdot \lambda_{j(R)}$, where $j(R)$ is a monotonically decreasing mapping from $R$ to $1,2,\ldots, m$, and $\lambda_1,\lambda_2, \ldots, \lambda_m$ are as before.
  The graph $\lambda^*(R, \nu)$ thus follows a piecewise linear ``sawtooth'' pattern with at most $3^k$ jumps.

  \item Fix $\nu$ and $R$ such that $\lambda( \cdot )$ is continuous in $R$ at $(R,\nu)$, and let $\bar \lambda = \lambda(R, \nu)$ be such that $\eta \neq 0$. 
  Then the minimum of $SURE$ is well separated: For any $\epsilon>0$,
  $$
    \inf_{\lambda \in \mathbb R^+ \backslash [\bar \lambda - \epsilon,\bar \lambda + \epsilon]}  SURE(\lambda, R, \nu) -  SURE(\bar \lambda, R, \nu) >0.
  $$
\end{enumerate}
\end{lemma}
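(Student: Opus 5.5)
The plan is to prove the four parts in order. Part 1 rests on an explicit formula for $g^\lambda$ on a fixed sign pattern, and Parts 2--4 follow from it using homogeneity and finiteness.

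\emph{Part 1.} Fix $\theta$ and a sign vector $\eta$ with active set $J$. On the set of $\lambda$ for which the lasso solution $h^\lambda(\theta)$ has sign pattern $\eta$, the stated first-order formula gives
$$\theta+g^\lambda = A_J h^\lambda_J = P_J\theta-\lambda A_J(A_J'A_J)^{-1}\eta_J,$$
where $P_J=A_J(A_J'A_J)^{-1}A_J'$. Hence $g^\lambda=(P_J-I)\theta-\lambda A_J(A_J'A_J)^{-1}\eta_J$ and $\nabla g^\lambda=P_J-I$, which is constant in both $\theta$ and $\lambda$. In $\|g^\lambda\|^2$ the cross term vanishes, because $(P_J-I)\theta$ is orthogonal to the column space of $A_J$. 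This gives the stated decomposition. Since the trace term and $\|(P_J-I)\theta\|^2$ do not depend on $\lambda$, we get $SURE=const.+\tfrac12\lambda^2 q_\eta$ with $q_\eta=\eta_J'(A_J'A_J)^{-1}\eta_J\ge0$, which is nondecreasing on $\lambda\ge0$.

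To show that each $\eta$ yields at most one segment, I will show that $\{\lambda:\text{pattern}=\eta\}$ is an interval. The KKT conditions for that pattern are of two kinds. The sign conditions on $h_J^\lambda$, which is affine in $\lambda$, each define an interval. For the inactive coordinates the condition is $|A_{J^c}'(\theta-A_Jh_J^\lambda)|\le\lambda$ componentwise. The left side is $|a+b\lambda|$, so each such inequality is an intersection of two half-lines. Uniqueness of the lasso solution, which holds because $A$ is invertible, then makes the pattern well defined. This gives at most $3^k$ segments.

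\emph{Parts 2 and 3.} Homogeneity of the lasso problem gives $g^{R\lambda}(R\nu)=Rg^\lambda(\nu)$ and $\nabla g^{R\lambda}(R\nu)=\nabla g^\lambda(\nu)$, which is the scaling identity. By Part 1, SURE is nondecreasing on each segment, so local minima can only occur at left endpoints of segments, i.e.\ at $\lambda=0$ or where $\eta$ changes; these scale by $R$. At the candidate $R\lambda_j$, SURE equals $\tfrac12[\trace\Sigma+R^2a_j+2b_j]$ with $a_j=\|g^{\lambda_j}(\nu)\|^2$ and $b_j=\trace(\nabla g^{\lambda_j}(\nu)\Sigma)$. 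The global minimizer therefore picks the lowest of finitely many lines in $R^2$. By the standard lower-envelope argument, the minimizing index moves monotonically toward smaller slopes $a_j$ as $R$ grows, with at most $m-1$ switches. Between switches $\lambda(R,\nu)=R\lambda_{j(R)}$ is linear in $R$, which produces the sawtooth.

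\emph{Part 4.} At a continuity point $R$ there is a unique minimizing index, so all other candidate values exceed the minimum by a strict gap $\Delta>0$. Ties among lines occur only at switch points, and there $\lambda$ jumps unless the tied candidates coincide. For $\lambda$ outside $[\bar\lambda-\epsilon,\bar\lambda+\epsilon]$ I split into cases.
\begin{enumerate}
\item If $\lambda$ lies on the segment of $\bar\lambda$, the pattern there is $\eta\neq0$, so $q_\eta>0$. The excess is then at least $\tfrac12 q_\eta((\bar\lambda+\epsilon)^2-\bar\lambda^2)>0$.
\item If $\lambda$ lies on another segment, SURE there is at least its value (or one-sided limit) at that segment's left endpoint. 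That value is at least the corresponding candidate value, hence at least the minimum plus $\Delta$.
\item The unbounded final segment has $\eta=0$ and constant SURE. Since $\eta\neq0$ at $\bar\lambda$, it is a different segment and is covered by case 2.
\end{enumerate}
Because there are finitely many segments, the infimum is a minimum of finitely many positive numbers, so it is positive.

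The main obstacle is the bookkeeping at segment boundaries. SURE can jump there, because the trace term changes discretely, so infima may be approached but not attained. I will handle this by defining the candidate local minima through the right limits at segment left endpoints, which amounts to working with the lower semicontinuous envelope. I will then check that ties between these candidates occur only at the countably many switch values of $R$.
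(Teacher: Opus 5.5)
Your route is the paper's: the affine form of $g^\lambda$ on a fixed sign pattern, the homogeneity $g^{R\lambda}(R\nu)=Rg^\lambda(\nu)$, a comparison of finitely many candidate values affine in $R^2$, and a finite-candidate gap for item 4. In places you are more explicit than the paper. The orthogonality $A_J'(P_J-I)\theta=0$ gives the decomposition of $\|g^\lambda\|^2$ directly. Your KKT argument that each sign pattern occupies an interval of $\lambda$ replaces the appeal to Mairal--Yu. Your right-limit bookkeeping at breakpoints handles an attainment issue the paper passes over.

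However, you never establish the global fact on which the paper's items 2--4 rest: $\lambda\mapsto g^\lambda(\nu)$ is continuous across breakpoints. One way to see this: the lasso objective is strongly convex in $h$ when $A$ is invertible, which yields $\|h^\lambda-h^{\lambda'}\|\le C|\lambda-\lambda'|$. Consequently $a(\lambda)=\|g^\lambda(\nu)\|^2$ is continuous and nondecreasing on all of $\mathbb R^+$, and strictly increasing wherever $\eta\neq0$. Your Part 1 gives monotonicity only within each segment, which neither orders nor separates the slopes $a_j$ of your candidate lines. The missing fact is needed in three places.
\begin{enumerate}
\item In item 2 you show only that the breakpoints scale with $R$. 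Which breakpoints are local minimizers is decided by the jump of SURE there, and that jump is $R$-independent only if $R^2a(\lambda)$ does not jump.
\item In item 3, ``toward smaller slopes'' becomes the claimed ``$j(R)$ decreasing'' only if $a_j$ increases with $\lambda_j$.
\item Most seriously, in item 4 your assertion that ties occur only at switch points presupposes that no two candidate lines are identical. If $a_j=a_{j'}$ and $b_j=b_{j'}$, the two candidates tie for every $R$ and a tie-broken $\lambda(\cdot)$ is continuous. Yet the other minimizer lies outside $[\bar\lambda-\epsilon,\bar\lambda+\epsilon]$ with zero excess, so the conclusion fails. Your phrase ``unless the tied candidates coincide'' leaves exactly this case open.
\end{enumerate}
With continuity the gap closes. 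For candidates $\lambda_j<\lambda_{j'}$ we have $\eta\neq0$ on $(\lambda_j,\lambda_{j'})$, since the $\eta=0$ segment is the last one, hence $a_j<a_{j'}$. Each pairwise difference $\tfrac12R^2(a_j-a_{j'})+(b_j-b_{j'})$ is then strictly monotone in $R$ and vanishes at most once. A tie therefore forces a jump of $\lambda(\cdot)$, which gives your $\Delta>0$ at continuity points.
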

\begin{proof}[Proof of Lemma~\ref{lem:surelasso} (Properties of $SURE$ for Lasso):]$\;$
\begin{enumerate}
  \item Consider two values $\lambda_1,\lambda_2$ of $\lambda$ such that $\eta$ is the same for these two values.
  It follows from the optimality conditions for $h^{\lambda}$ that for any intermediate value of $\lambda$ between $\lambda_1,\lambda_2$, the optimal $h^\lambda$ is a linear interpolation between $h^{\lambda_1}$ and $h^{\lambda_2}$, and in particular $\eta$ remains the same in between $\lambda_1,\lambda_2$.
  For details, see \cite{mairal2012complexity}, Lemma 2.

  The vector $\eta \in \{-1,0,1\}^k$ can take $3^k$ possible values. 
  The gradient of $g^\lambda$ with respect to $\theta$ is a function of $J$, which is a function of $\eta$, but it does not depend on $\lambda$ otherwise:
  $$
  \nabla g^\lambda = A \cdot \nabla h^\lambda - I 
  = A_J \cdot (A_J'A_J)^{-1} \cdot A_J' - I.
  $$

  It follows that the penalty term $2 \trace \left ( \nabla g^\lambda\cdot \Sigma\right )$ in the expression for $SURE$ has at most $3^k-1$ jumps, as a function of $\lambda$ for fixed $\theta$, and is constant in between these jumps.\footnote{This bound can be refined, cf. \cite{mairal2012complexity}, but it is enough for our purposes.}

  Consider now the term $\|g^\lambda\|^2$, which is the other term in the expression for $SURE$.
  Fixing $\lambda$, and the corresponding set $J$ of active coordinates, we get that 
  $\|g^\lambda\|^2 = \|A_J h_J^\lambda - \theta\|^2$, which is minimized at $\lambda = 0$, holding $J$ fixed. 
  For $\lambda = 0$ we get 
  $$ \|A_J h_J^0 - \theta\|^2 = \|A_J  ((A_J'A_J)^{-1} A_J' - I) \cdot \theta\|^2 =  \|\nabla g^\lambda \cdot \theta\|^2.$$
  This is the sum of squared errors for an OLS regression of the elements of $\theta$ on the rows of $A_J$.

  Given $J$, $\|g^\lambda\|^2$ is a quadratic function of $\lambda$ with second derivative
  $
  \partial^2_\lambda \|A_J h_J^\lambda - \theta\|^2 = 2  \cdot \eta_J' (A_J'A_J)^{-1} \eta_J.
  $
  It follows that
  $$
  \|g^\lambda\|^2 = \|\nabla g^\lambda \cdot \theta\|^2 + \lambda^2  \cdot \eta_J' (A_J'A_J)^{-1} \eta_J.
  $$
  This last result implies that $\|g^\lambda\|^2$ is monotonically increasing in $\lambda$ on each segment defined by $\eta$. Since $g^\lambda$ is continuous in $\lambda$ (cf. Lemma 2 in \citealt{mairal2012complexity}), this also implies that $\|g^\lambda\|^2$  is monotonically increasing across $\lambda \in \mathbb R^+$; we will use this fact below.

  \item Multiplying the objective of the optimization problem $h^\lambda(\theta) = \argmin_h \tfrac12 \| A  \cdot h - \theta\|^2 + \lambda  \cdot \|h\|_1$ by a factor $1/R^2$ yields
  $$
  h^\lambda(R \cdot \nu) = \argmin_h \tfrac12 \| A  \cdot (h/R) - \nu\|^2 + \lambda/R  \cdot \|h/R\|_1 = R \cdot h^{\lambda/R}(\nu),
  $$
  and thus also 
  \bals
    g^\lambda(R \cdot \nu) &= R \cdot g^{\lambda/R}(\nu) \textrm{, and}\\
    \nabla g^\lambda(R \cdot \nu) &= \tfrac1R  \cdot \partial_\nu g^\lambda(R \cdot \nu) = \nabla g^{\lambda/R}(\nu).
  \eals
  which immediately implies
  $$
  SURE(R \cdot \lambda, R, \nu) =\tfrac12\left[\trace(\Sigma) + R^2  \cdot \|g^{\lambda}(\nu)\|^2 + 2  \trace \left ( \nabla g^{\lambda}(\nu)\cdot \Sigma\right )\right].
  $$

  Turning to the characterization of local minima, since $\|g^{\lambda}(\nu)\|^2$ is monotonically increasing in $\lambda$ (cf. item 1), the local minima of $SURE(R \cdot \lambda, R, \nu)$ are exactly the values of $\lambda$ where $2  \trace \left ( \nabla g^{\lambda}(\nu)\cdot \Sigma\right )$ jumps down. These values are independent of $R$, and the claim follows.

  \item That $\lambda(R, \nu) = R  \cdot \lambda_{j(R)}$ follows immediately from the preceding item.
  It remains to show that $j(R)$ is monotonically decreasing.
  To see this, consider any pair of values $j>j'$.
  Then $\|g^{\lambda_j}(\nu)\|^2 > \|g^{\lambda_{j'}}(\nu)\|^2$ by monotonicity of $\|g^{\lambda}(\nu)\|^2$ in $\lambda$ (cf. item 1), and we get that
  $$SURE(R \cdot \lambda_j, R, \nu) - SURE(R \cdot \lambda_{j'}, R, \nu) = \tfrac12 R^2  \cdot\left( \|g^{\lambda_j}(\nu)\|^2 - \|g^{\lambda_{j'}}(\nu)\|^2\right)$$
  is increasing in $R$. The claim follows.

  \item By the preceding argument, at a point of continuity in $R$
  $$
  \inf_{\lambda \in \{R \cdot \lambda_1,R \cdot \lambda_2, \ldots, R \cdot \lambda_m\} \backslash \bar \lambda}  SURE(\lambda, R, \nu) -  SURE(\bar \lambda, R, \nu) >0.
  $$
  The same holds for $\lambda$ to the right of any of the local minimizers $\{R \cdot \lambda_1,R \cdot \lambda_2, \ldots, R \cdot \lambda_{m}\} \backslash \bar \lambda$, since $SURE$ is monotonically increasing in $\lambda$ away from the local minimizers.

  It only remains to verify the condition for $\lambda$ immediately to the right of the global minimizer $\bar \lambda$.
  This holds because   $\|g^\lambda\|^2 = const. + \lambda^2  \cdot \eta_J' (A_J'A_J)^{-1} \eta_J$ is strictly monotonically increasing in $\lambda$ for $\lambda > 0$ and $\eta \neq 0$.
\end{enumerate}
\end{proof}

\begin{lemma}[Local linearity of $g^\lambda$]$\;$
  \label{lem:locallinear}
  \begin{enumerate}
    \item For all $\gamma > 0$  there exists an $\epsilon>0$ such that $g^\lambda(\theta)$ is linear on $B_\epsilon(\hat \theta) = \{\theta:\; \|\theta - \hat\theta\| < \epsilon\}$ with probability greater than $1-\gamma$, where $\hat \theta \sim N(\theta_0, \Sigma)$.
    \item For almost every point $\theta$, the function $SURE(\lambda, \theta)$ satisfies  
    $$
      \sup_{\lambda \in \Lambda} |SURE(\lambda, \theta') - SURE(\lambda, \theta)| \rightarrow 0
    $$
    as $\theta' \rightarrow \theta$.
  \end{enumerate}
\end{lemma}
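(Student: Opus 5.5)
We prove the two parts of Lemma~\ref{lem:locallinear} together. Fix $\lambda$, or the finite grid $\Lambda$; since $\Lambda$ is finite in the Lasso case, it suffices to treat each $\lambda\in\Lambda$ separately and take a union or intersection at the end.

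\emph{Polyhedral structure of $g^\lambda$.} Write $g^\lambda(\theta)=A h^\lambda(\theta)-\theta$, where $h^\lambda(\theta)$ solves the Lasso problem $\min_h \tfrac12\|Ah-\theta\|^2+\lambda\|h\|_1$. Since $A$ is invertible, this problem is strictly convex, so $h^\lambda$ is unique. For each sign pattern $\eta\in\{-1,0,1\}^k$ with support $J$, define the region $P_\eta\subset\mathbb R^k$ of those $\theta$ for which the KKT conditions hold with sign pattern $\eta$. These conditions are:
\begin{itemize}
\item[(i)] $h_J=(A_J'A_J)^{-1}[A_J'\theta-\lambda\eta_J]$;
\item[(ii)] $\eta_j h_j\ge 0$ for $j\in J$;
\item[(iii)] $|A_j'(\theta-A_Jh_J)|\le\lambda$ for $j\notin J$.
\end{itemize}
All of these are linear inequalities in $\theta$, so each $P_\eta$ is a closed polyhedron. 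The $3^k$ sets $P_\eta$ cover $\mathbb R^k$, and on $P_\eta$ the map $g^\lambda$ is affine, with gradient $A_J(A_J'A_J)^{-1}A_J'-I$ as in Lemma~\ref{lem:surelasso}. Now let $U$ be the union of the interiors of the $P_\eta$. The complement of $U$ lies in the union of the boundaries of finitely many polyhedra, which is a finite union of subsets of affine hyperplanes. It is therefore a closed set of Lebesgue measure zero. Every $\theta\in U$ has a ball around it on which $g^\lambda$ is affine; this is what the statement means by ``linear''.

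\emph{Part 1.} Define $\epsilon(\theta)=\sup\{\epsilon\le 1: g^\lambda \text{ affine on } B_\epsilon(\theta)\}$, which is positive on $U$. The set $\{\theta:\epsilon(\theta)>\epsilon\}$ increases to $U$ as $\epsilon\downarrow0$. Because $\Sigma$ is positive definite (Assumption~\ref{as:sequence}), the law of $\hat\theta\sim N(\theta_0,\Sigma)$ is absolutely continuous, so $P(\hat\theta\in U)=1$. Continuity from below of probability then gives an $\epsilon$ with $P(\epsilon(\hat\theta)>\epsilon)>1-\gamma$. For the finite grid $\Lambda$, we apply this to $\gamma/|\Lambda|$ for each $\lambda$ and take the smallest $\epsilon$.

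\emph{Part 2.} Fix $\theta\in\bigcap_{\lambda\in\Lambda}U_\lambda$; this is a full-measure set. For $\theta'$ close enough to $\theta$, $\theta'$ lies in the same open region as $\theta$ for every $\lambda$, so $\nabla g^\lambda(\theta')=\nabla g^\lambda(\theta)$ and the trace term of $SURE$ is unchanged. The remaining term satisfies
\[
\bigl|\|g^\lambda(\theta')\|^2-\|g^\lambda(\theta)\|^2\bigr|\le\|\theta'-\theta\|\,\bigl(\|g^\lambda(\theta')\|+\|g^\lambda(\theta)\|\bigr)
\]
by Lemma~\ref{lem:lipschitz}. Since $\|g^\lambda(\theta)\|\le\|\theta\|$ (because $g=-\theta$ is feasible with zero penalty and $\pi$ is minimized at $0$), this bound is uniform in $\lambda$, and the difference goes to zero as $\theta'\to\theta$.

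The main obstacle is the first step: proving carefully that the boundary set where $g^\lambda$ fails to be locally affine is Lebesgue-null. The difficulty is ties, since a given $\theta$ may satisfy the KKT conditions for several sign patterns $\eta$. The covering-by-polyhedra argument above handles this, because points lying in several $P_\eta$ lie in their boundaries, except when the regions coincide, in which case the affine maps coincide as well by uniqueness of $h^\lambda$.
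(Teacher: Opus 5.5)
Your proof is correct and follows essentially the same route as the paper. Both arguments split $\mathbb R^k$ into the finitely many polyhedral sign-pattern regions given by the KKT conditions, observe that points not interior to any region lie in a finite union of hyperplanes and so have probability zero under the nondegenerate Gaussian, use continuity of the probability measure to obtain $\epsilon$, and get Part 2 from local constancy of $\nabla g^\lambda$ and continuity of $\|g^\lambda\|^2$ over the finite grid $\Lambda$. The differences are only in presentation: you use closed KKT polyhedra with continuity from below where the paper uses $\epsilon$-bands around $\partial\Theta_\eta$ with continuity from above, and you add the explicit bound $\|\theta'-\theta\|(\|\theta'\|+\|\theta\|)$ that makes the $\|g^\lambda\|^2$ term uniform in $\lambda$.
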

\begin{proof}[Proof of Lemma~\ref{lem:locallinear}:]
To show the first claim, denote
$$
\Theta_\eta = \{\theta:\; sign(h^\lambda(\theta)) = \eta\}.
$$
By the KKT conditions for $h^\lambda$, the set $\Theta_\eta$ is convex for every $\eta \in \{-1,0,1\}^k$, and its boundary $\partial \Theta_\eta$ is a finite union of subsets of hyperplanes. Furthermore 
$\mathbb R^k = \bigcup_\eta \Theta_\eta $, and $h^\lambda(\theta)$ is linear in $\theta$ on each of the sets $\Theta_\eta$.

The claim of the lemma therefore follows if we can show that the probability of an $\epsilon$-band around the boundary $\partial \Theta_\eta$,
$$
\partial \Theta_\eta^\epsilon = \{\theta:\; d(\theta, \partial \Theta_\eta) < \epsilon\},
$$
where $d$ is the Euclidean distance, has vanishing measure for small $\epsilon$.
Because $\partial \Theta_\eta$ is a subset of a finite union of hyperplanes, $P(\hat \theta \in \partial \Theta_\eta) = 0$.
By the properties of probability measures, since
$$
\partial \Theta_\eta = \bigcap_{\epsilon>0} \partial \Theta_\eta^\epsilon
$$
we get
$$
0 = P(\hat \theta \in \partial \Theta_\eta) = \lim_{\epsilon \rightarrow 0} P(\hat \theta \in \partial \Theta_\eta^\epsilon).
$$
Therefore, for $\epsilon$ small enough, $\hat \theta$ is more than $\epsilon$ away from the boundary of any of the sets $\Theta_\eta$ with probability bigger than $1-\gamma$, and the claim follows.\\

We now turn to the second claim.
Fix $\lambda \in \Lambda$.
By the preceding argument, for almost all points $\theta$, $\theta$ is in the interior of $\Theta_\eta$, for some $\eta$. 
Thus $\theta' \in \Theta_\eta$, as well, for $\|\theta' - \theta\|$ small enough.
By the characterization of $SURE$ for Lasso in item 1 of Lemma~\ref{lem:surelasso}, given $\lambda$ and $\eta$, $\nabla g^\lambda$ is constant and $SURE = const. + \tfrac12\|g^\lambda\|^2 = const. + \tfrac12\|\nabla g^\lambda  \cdot \theta\|^2$. This is continuous in $\theta$, and thus
$|SURE(\lambda, \theta') - SURE(\lambda, \theta)| \rightarrow 0$ as $\theta' \rightarrow \theta$.
Since almost everywhere continuity of $SURE$ in $\theta$ thus holds for fixed $\lambda$, it also holds simultaneously for any finite set of $\lambda$, and the claim follows.
\end{proof}

\section{Convergence of risk}
\label{sec:risk}

The remainder of our proof will draw on some standard results, which we recall here, including the following  results from \cite{van2000asymptotic}:
\begin{enumerate}
  \item \textbf{Joint convergence (item (v) of Theorem 2.6)}\\
  If $W^1_n \rightarrow^d W^1$, and $W^2_n \rightarrow^p 0$, then 
  $W_n \rightarrow^d W$,\\
  where $W_n =(W^1_n, W^2_n)$,  $W= (W^1, 0)$.

  \item \textbf{Almost everywhere CMT (item (ii) of Theorem 2.3):}
  Suppose that $W_n\rightarrow^d W$ (converges in distribution), and that $s(W)$ is almost everywhere continuous.
  Then $s(W_n)\rightarrow^d s(W)$.
  
  \item \textbf{Uniform integrability and convergence of expectations (Theorem 2.20):}
  Suppose that $s(W_n)\rightarrow^d s(W)$, and that
  \be
    \lim_{M\rightarrow \infty} \limsup_{n\rightarrow \infty} E\left[|s(W_n)| \bs 1(|s(W_n)| >M)\right] = 0.
    \label{eq:unif_integr}
  \ee
  Then $E[s(W_n)]\rightarrow E[s(W)]$.

\end{enumerate}

\subsection{Convergence in distribution}

Consider some arbitrary function $c(\lambda)$ that is minimized to choose the tuning parameter $\lambda$ (in due time, we will substitute $CV_n$ for this function).
Define
\bals
\Delta(\lambda) &= c(\lambda) - SURE(\lambda, \theta, \Sigma).
\eals
We think of $\Delta$ as an element of the space of bounded functions on $\Lambda \subset \mathbb R^+$, endowed with the $\sup$ norm.
Define furthermore
\bals
w &= \left(\theta, \Delta\right),&
\|w\| &= \|\theta\| + \sup_{\lambda \in \Lambda} |\Delta(\lambda)|
\eals

We have proven for Ridge (in  Lemma~\ref{lem:sureridge}), and for Lasso (in  Lemma~\ref{lem:surelasso}), that the minimum of $SURE$ with respect to $\lambda$ is well separated for almost all $\theta$ (with the exception of Lasso when $\lambda$ is so large that $\hat \theta^\lambda = 0$).
This implies the following lemma.
The ``$\min$'' in the definition of $ \tilde{\lambda}$ serves as a tie-breaking rule in the case of non-uniqueness of the minimizer.

\begin{lemma}
  \label{lem:ascontinuous}
  For almost every $\theta$, the 
  mapping from $w = (\theta, \Delta)$ to $ g^{\tilde{\lambda}(\theta, \Delta)}$, where
  \bals
    \tilde{\lambda}(\theta, \Delta) &= \min\left(\argmin_{\lambda \in \Lambda} \left[SURE(\lambda, \theta, \Sigma) + \Delta(\lambda)\right]\right),
  \eals
  is continuous at $w=(\theta, 0)$ with respect to the norm $\|w\|$.
\end{lemma}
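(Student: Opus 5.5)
Fix $\theta$ outside a Lebesgue-null exceptional set, to be specified below. Write $\bar\lambda = \tilde\lambda(\theta,0)$ and take a sequence $w_k = (\theta_k,\Delta_k) \to (\theta,0)$, so that $\theta_k\to\theta$ and $\sup_\lambda|\Delta_k(\lambda)|\to 0$. The plan has two steps. First, show that $\lambda_k := \tilde\lambda(\theta_k,\Delta_k)$ converges to $\bar\lambda$ by a standard argmin-continuity argument resting on well-separation. Second, show that $g^{\lambda_k}(\theta_k)\to g^{\bar\lambda}(\theta)$ by joint continuity of $(\lambda,\theta)\mapsto g^\lambda(\theta)$.

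\textbf{Step 1: the exceptional set.} For Ridge, write $\theta = R\nu$ and exclude those $\theta$ for which $\lambda(\cdot,\nu)$ is discontinuous in $R$. For each $\nu$ this set of $R$ is countable by Lemma~\ref{lem:sureridge}(2b), so in polar coordinates it is Lebesgue-null by Fubini. For Lasso, exclude the $\theta$ on the boundaries of the sets $\Theta_\eta$ for all $\lambda\in\Lambda$, which is a null set since $\Lambda$ is finite (Lemma~\ref{lem:locallinear}). Also exclude the $\theta$ at which two grid points tie as global minimizers of $SURE(\cdot,\theta,\Sigma)$. Each tie $SURE(\lambda_1,\theta)=SURE(\lambda_2,\theta)$ is, on each region where $\eta$ is fixed, a nontrivial quadratic equation in $\theta$, hence defines a null set. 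The one degenerate case is the pair of $\lambda$'s with $\eta=0$, where both give $\hat\theta^\lambda=0$. In that case $g^{\lambda}(\theta)$ is the same for both, so a tie is harmless for the mapping to $g$. On the complement, SURE has a well-separated minimum. For Lasso this is immediate because $\Lambda$ is finite and there are no ties. For Ridge it is Lemma~\ref{lem:sureridge}(3), compactifying $\Lambda=\mathbb R^+\cup\{\infty\}$ via $s=\lambda/(1+\lambda)$ so that neighbourhoods of $\infty$ are handled as well. In either case, for every $\epsilon>0$ there is $\delta_\epsilon>0$ with $SURE(\lambda,\theta)\ge SURE(\bar\lambda,\theta)+\delta_\epsilon$ outside an $\epsilon$-neighbourhood of $\bar\lambda$.

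\textbf{Step 2: argmin convergence.} Use the uniform continuity in $\theta$ from Lemma~\ref{lem:sureridge}(1) for Ridge and Lemma~\ref{lem:locallinear}(2) for Lasso (valid at our a.e.\ $\theta$). Together with $\sup|\Delta_k|\to0$ this gives $\sup_\lambda|SURE(\lambda,\theta_k)+\Delta_k(\lambda)-SURE(\lambda,\theta)|\to 0$. For large $k$ this supremum is below $\delta_\epsilon/3$. Any minimizer of the perturbed criterion, in particular $\lambda_k$ whatever the tie-break, must then lie within $\epsilon$ of $\bar\lambda$: otherwise its criterion value exceeds $SURE(\bar\lambda,\theta)+\delta_\epsilon/3$, while the value at $\bar\lambda$ is below $SURE(\bar\lambda,\theta)+\delta_\epsilon/3$. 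Hence $\lambda_k\to\bar\lambda$. For Lasso with a finite grid this means $\lambda_k=\bar\lambda$ eventually, or $\lambda_k$ lies in the harmless set of tied $\eta=0$ values.

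\textbf{Step 3: convergence of $g$.} Bound
\[
\|g^{\lambda_k}(\theta_k)-g^{\bar\lambda}(\theta)\|\le\|\theta_k-\theta\|+\|g^{\lambda_k}(\theta)-g^{\bar\lambda}(\theta)\|
\]
using Lemma~\ref{lem:lipschitz}. For Ridge, $g^\lambda(\theta)=C_\lambda\theta$, and $C_\lambda$ is continuous in $\lambda$ on the compactified $\Lambda$, with $C_\infty=-I$. For Lasso the second term is eventually zero. This proves continuity.

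The main obstacle is Step 1 for Ridge: turning continuity of $\lambda(R,\nu)$ in $R$ into a null exceptional set in $\theta$, and making well-separation hold on the compactified space including the endpoints $0$ and $\infty$. I would handle this via polar coordinates, Fubini, and the compactified parametrization.
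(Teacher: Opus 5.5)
Your argument is correct. For Ridge it is essentially the paper's proof: well-separation at points where $\lambda(\cdot,\nu)$ is continuous in $R$ (Lemma~\ref{lem:sureridge}, items 2b and 3), uniform-in-$\lambda$ continuity of $SURE$ in $\theta$ (item 1), the same ``perturbation below a fraction of the gap'' argmin step, and joint continuity of $C_\lambda\theta$. You also make explicit two points the paper leaves implicit. The first is the Fubini step from ``a.e.\ $R$ for each $\nu$'' to ``a.e.\ $\theta$''. The second is the treatment of $\lambda=\infty$; strict supermodularity does extend there, since $\|C_\lambda\nu\|<1=\|C_\infty\nu\|$ for finite $\lambda$.

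For Lasso your route genuinely differs. The paper gets well-separation from the scaling/sawtooth structure of Lemma~\ref{lem:surelasso} (items 3--4), which is formulated for minimization over all of $\mathbb R^+$. It then adds a separate ``$SURE$ is flat to the right'' argument for the case $\hat\theta^{\bar\lambda}=0$. You instead use finiteness of $\Lambda$ directly. Well-separation then reduces to uniqueness of the minimizer, and ties are zero sets of polynomials on the finitely many cells where the sign patterns are fixed. The only persistent ties, among grid values with $\eta=0$, are neutralized by your Lipschitz bound in Step 3. This is more elementary and fits the finite grid of Assumption~\ref{as:ridgelasso} more directly, because the rescaling $\lambda\mapsto R\lambda$ behind items 3--4 does not map a fixed grid into itself. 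The paper's route, for its part, parallels the Ridge argument ray by ray.

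The one step you should write out is why the tie equation is nontrivial. On a cell, $2[SURE(\lambda_1,\theta)-SURE(\lambda_2,\theta)]$ equals $\theta^\intercal(P_{J_2}-P_{J_1})\theta$ plus a constant, where $P_J=A_J(A_J^\intercal A_J)^{-1}A_J^\intercal$.
\begin{itemize}
\item If $J_1\neq J_2$, the quadratic part is nonzero because $A$ is invertible, so the column spans of $A_{J_1}$ and $A_{J_2}$ differ.
\item If $J_1=J_2\neq\emptyset$, the quadratic and trace parts cancel. What remains is $\tfrac12\big(\lambda_1^2\eta_{1,J}^\intercal(A_J^\intercal A_J)^{-1}\eta_{1,J}-\lambda_2^2\eta_{2,J}^\intercal(A_J^\intercal A_J)^{-1}\eta_{2,J}\big)$. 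When the sign vectors differ, its nonvanishing is not visible from the formula. It follows from strict monotonicity of $\|g^\lambda(\theta)\|^2$ in $\lambda$ while $h^\lambda(\theta)\neq 0$ (Lemma~\ref{lem:surelasso}, items 1 and 4).
\end{itemize}
Without this check, a tie could in principle hold on an entire cell of positive measure.
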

\begin{proof}
  We first prove the claim for Ridge, before discussing the necessary modifications for the argument to apply to Lasso.
  Fix $\nu$. By item 2a of Lemma~\ref{lem:sureridge}, for almost every $R$, $\lambda(R, \nu)$ is continuous in $R$.
  Fix such an $R$, and let $\theta = R \nu$.
  Fix $\epsilon >0$ and let 
  $$
  \gamma = \inf_{\lambda \in \mathbb R^+ \backslash [\bar \lambda - \epsilon,\bar \lambda + \epsilon]}  SURE(\lambda, R, \nu) -  SURE(\bar \lambda, R, \nu) >0,
  $$
  where $\bar \lambda = \tilde{\lambda}(\theta, 0) \in \argmin_\lambda  SURE(\lambda, R, \nu)$.
  The following figure illustrates the definition of $\gamma$:
  \begin{center}
  \begin{tikzpicture}[scale=1]
  
      \draw[->] (-.5,1) -- (5,1) node[right] {$\lambda$};
      \draw[->] (0,0.5) -- (0,4) node[left] {SURE};
      
      \coordinate (lambda_star) at (2.5,2);
      \coordinate (lambda_star_minus_epsilon) at (1.5,2);
      \coordinate (lambda_star_plus_epsilon) at (3.5,2);
      \coordinate (gamma) at (2.5,1);
      
      \draw[thick] plot[domain=1:4, smooth] (\x, {(\x-2.5)^2 + 1.5});

      \draw[dashed, thin, gray] (1.5,1) -- (1.5,4);
      \draw[dashed, thin, gray] (3.5,1) -- (3.5,4);
      \draw[dashed, thin, gray] (2.5,1) -- (2.5,4);
      
      \draw[dashed, thin, gray] (0,1.5) -- (5,1.5);
      \draw[dashed, thin, gray] (0,2.5) -- (5,2.5);
      
      \draw [decorate,decoration={brace,amplitude=10pt,mirror,raise=4pt},yshift=0pt] (5,1.5) -- (5,2.5) node [black,midway,xshift=0.6cm] {$\gamma$};

      \node at (2.5,.7) {$\bar \lambda$};
      \node at (1.5,.7) {$\bar \lambda - \epsilon$};
      \node at (3.5,.7) {$\bar \lambda + \epsilon$};
      \node at (5.5,3.2) {$SURE(\lambda, R, \nu)$};
      
  \end{tikzpicture}
\end{center}
  By item 3 of Lemma~\ref{lem:sureridge}, $\gamma > 0$.
  By item 1 of Lemma~\ref{lem:sureridge}, there exists a $\delta$ such that if $\|\theta' - \theta\|<\delta$ then $ \sup_{\lambda \in \mathbb R^+} |SURE(\lambda, \theta') - SURE(\lambda, \theta)| < \gamma/4$.\\

  Let $w' = (\theta', \Delta)$ and $w = (\theta, 0)$.
  Suppose that $\|w'- w\| < \min(\delta, \gamma/4)$, so that  $\|\theta' - \theta\|<\delta$ and  $\sup_\lambda |\Delta(\lambda)| < \gamma/4$.
  Denote $c(\lambda) = SURE(\lambda, \theta') + \Delta(\lambda)$, so that $\tilde{\lambda}(\theta', \Delta)$ is a minimizer of $c(\lambda)$.
  Then
\bals
   SURE(\tilde{\lambda}(\theta', \Delta) , \theta)\\
    < SURE(\tilde{\lambda}(\theta', \Delta) , \theta') + \tfrac14 \gamma\\
    < c(\tilde{\lambda}(\theta', \Delta)) + \tfrac 12 \gamma\\
    \leq c(\bar \lambda) + \tfrac12 \gamma \\
    < SURE(\bar \lambda , \theta') + \tfrac34 \gamma \\
    < SURE(\bar \lambda, \theta) + \gamma.
\eals
It follows that $|\tilde{\lambda}(\theta', \Delta) - \bar \lambda| < \epsilon$.
This proves that $\tilde{\lambda}(\theta, \Delta)$ is continuous at $(\theta, 0)$
The claim for Ridge follows, since continuity of $g^\lambda(\theta) = (\tfrac1{\lambda} A + I)^{-1} \cdot \theta$ in both $\lambda$ and $\theta$ is immediate.\\

Turning to Lasso, most of this argument holds verbatim, with the following modifications:
Consider first values of $\theta$ such that $\hat \theta^{\tilde \lambda} \neq 0$.
\begin{enumerate}
  \item By item 4 of Lemma~\ref{lem:surelasso}, $\gamma > 0$ for almost all such $\theta$.
  
  \item By item 2 of Lemma~\ref{lem:locallinear}, for \textit{almost} all $\theta$ there exists a $\delta$ such that if $\|\theta' - \theta\|<\delta$ then $ \sup_{\lambda \in \Lambda} |SURE(\lambda, \theta') - SURE(\lambda, \theta)| < \gamma/4$.
  
  \item By the characterization of $g^\lambda$ and $h^\lambda$ given at the  outset of Appendix~\ref{sec:lasso}, $g^\lambda(\theta)$ is continuous in both $\lambda$ and $\theta$. The claim thus follows for $\theta$ such that $\hat \theta^{\tilde \lambda} \neq 0$.
\end{enumerate}

Consider now values of $\theta$ such that $\hat \theta^{\tilde \lambda} = 0$.
For such values, $SURE$ is flat in $\lambda$ for values of $\lambda$ greater than $\tilde \lambda$, because $\hat \theta^{\lambda} = 0$ and $\nabla g^{\lambda} = 0$ for all such $\lambda$ (see Figure \ref{fig:multimodality} for an example). 
Because $SURE$ is flat to the right, continuity of $\tilde \lambda(\theta,\Delta)$ does not necessarily hold at $(\theta, 0)$; small perturbations of $\Delta$ can lead to large changes of $\tilde \lambda$.

By the same arguments used to prove item 4 of Lemma~\ref{lem:surelasso}, we obtain however (for almost all such $\theta$) that
$$
\inf_{\lambda < \lambda_m}  SURE(\lambda, R, \nu) -  SURE(\bar \lambda, R, \nu) >0,
$$
where $\lambda_m$ is defined as in Lemma~\ref{lem:surelasso}.
This, in combination with item 2 of Lemma~\ref{lem:locallinear} (for almost all $\theta$, $\sup_{\lambda \in \Lambda} |SURE(\lambda, \theta') - SURE(\lambda, \theta)| \rightarrow 0$ as $\theta' \rightarrow \theta$), implies that $\tilde \lambda(\theta', \Delta)$ is such that 
$g^{\tilde \lambda(\theta,0)}(\theta') = -\theta'$ for all $(\theta', \Delta)$ in a neighborhood of $(\theta,0)$, and the claim follows.
\end{proof}

\subsection{Proof of convergence in distribution}
We can now prove Lemma \ref{lem:convergencetuned}, drawing on our preceding Lemmas.

\begin{proof}[Proof of Lemma \ref{lem:convergencetuned}: ]$\;$
  \begin{itemize}
    \item By Lemma \ref{lem:ifrep},
    $$ \hat \theta_n \rightarrow^d \hat \theta \sim N(\theta_0, \Sigma).$$
    \item Let $\Delta_n(\lambda) = CV_n(\lambda) - c_n - SURE(\lambda, \hat \theta_n, \Sigma)$.
    By Lemma \ref{lem:convergence_cv},
    $$
      \sup_{\lambda \in \Lambda} \left|\Delta_n(\lambda)\right| \rightarrow^p 0.
    $$
    \item By joint convergence (\citealt{van2000asymptotic}, item (v) of Theorem 2.6), 
    $W_n = (\hat \theta_n, \Delta_n) \rightarrow^d W = (\hat \theta, 0)$.

    \item By Lemma \ref{lem:ascontinuous}, the mapping from $W_n$ to $\hat \theta_n + g^{\tilde{\lambda}(\hat \theta_n, \Delta_n)}(\hat \theta_n)$
    is almost surely continuous on the support of $W$.

    \item By definition, $\tilde{\lambda}(\hat \theta_n, \Delta_n) = \lambda_n^* = \argmin_\lambda CV_n(\lambda)$,\\
      and $\tilde{\lambda}(\hat \theta, 0) = \lambda^* = \argmin_\lambda SURE(\lambda, \hat \theta, \Sigma)$.

    \item The almost surely continuous mapping theorem (\citealt{van2000asymptotic}, Theorem 2.3) then implies
    $$
    \hat \theta_n + g^{\lambda_n^*}(\hat \theta_n) \rightarrow_d 
    \hat \theta + g^{\lambda^*}(\hat \theta) =
    \hat \theta^{\lambda^*}.
    $$
    \item By Lemma~\ref{lem:ifrep} $\hat \theta_n^* = \hat \theta_n+ g^{\lambda_n^*}(\hat \theta_n) + o_p(1)$, and the claim follows.
  \end{itemize}
\end{proof}

\subsection{Convergence of loss and risk}
\begin{proof}[Proof of Theorem \ref{theo:risk_convergence}:]$\;$\\
By Lemma~\ref{lem:limitingloss},
$$\bar L_n(\theta, \theta_0) \to \tfrac12 \|\theta - \theta_0\|^2$$
uniformly in any bounded neighborhood of $\theta_0$.
By Lemma \ref{lem:convergencetuned},  
$$
\hat \theta_n^* \rightarrow_d \hat \theta^*.
$$
Combining these two results gives
$$
\bar L_n(\hat \theta_n^*,\theta_0) \rightarrow_d \tfrac12 \|\hat \theta^* - \theta_0\|^2.
$$
The distributional convergence claim of Theorem \ref{theo:risk_convergence} follows.
The claim of Corollary \ref{cor:risk_convergence} is then immediate.
\end{proof}

\end{document}